\begin{document}
\theoremstyle{plain}
\newtheorem{thm}{Theorem}[section]
\newtheorem{prop}[thm]{Proposition}
\newtheorem{lem}[thm]{Lemma}
\newtheorem{cor}[thm]{Corollary}
\newtheorem{deft}[thm]{Definition}
\newtheorem{hyp}{Assumption}
\newtheorem*{KSU}{Theorem (Kenig, Sj\"ostrand and Uhlmann)}

\theoremstyle{definition}
\newtheorem{rem}[thm]{Remark}
\numberwithin{equation}{section}
\newcommand{\eps}{\varepsilon}
\renewcommand{\d}{\partial}
\newcommand{\dd}{\mathrm{d}}
\newcommand{\e}{\mathrm{e}}
\newcommand{\re}{\mathop{\rm Re} }
\newcommand{\im}{\mathop{\rm Im}}
\newcommand{\ch}{\mathop{\rm ch}}
\newcommand{\R}{\mathbf{R}}
\newcommand{\C}{\mathbf{C}}
\renewcommand{\H}{\mathbf{H}} 
\newcommand{\N}{\mathbf{N}}
\newcommand{\D}{\mathcal{C}^{\infty}_0} 
\newcommand{\supp}{\mathop{\rm supp}}
\hyphenation{pa-ra-met-ri-zed}
\title[]{Stability estimates for a Magnetic Schrodinger operator with partial data}
\author[]{Leyter Potenciano-Machado }
\address{Departamento de Matem\'aticas, Universidad Aut\'onoma de Madrid, Campus de Cantoblanco, 28049 Madrid, Spain}
\email{leyter.potenciano@uam.es}
\begin{abstract}

In this paper we study local stability estimates for a magnetic Schr\"odinger operator with partial data on an open bounded set in dimension $n\geq 3$. This is the corresponding stability estimates for the identifiability result obtained by Bukgheim and Uhlmann \cite{BU} in the presence of magnetic field and when the measurements for the Dirichlet-Neumann map are taken on a neighborhood of the illuminated region of the boundary for functions supported on a neighborhood of the shadow region. We obtain $\log\log$-estimates for  magnetic potential and  $\log\log\log$ for electrical potential.

\end{abstract}
\maketitle
\setcounter{tocdepth}{1} 
\tableofcontents

\section{Introduction}

Let $\Omega \subset \mathbb{R}^n $ ($n\geq 3$) be an open bounded set with $C^{\infty}$ boundary, denoted by $\partial \Omega$. We consider the following magnetic Schr\"odinger operator
\begin{equation}
\mathcal{L}_{A,q}(x, D) := \sum_{j=1}^{n}\left( D_j + A_j(x)  \right)^2 + q(x) = D^2 + A\cdot D + D\cdot A + A^2 +q,
\end{equation}
where $D=-i \nabla$, $A=\left(  A_j \right)_{j=1}^{n}\in C^2\left( \overline{\Omega}; \mathbb{R}^n \right)$ is a magnetic potential and $q\in L^{\infty}\left( \Omega \right) $ is an electrical potential. The inverse boundary value problem (IBVP) under consideration in this article is to recover information  (inside $\Omega$)  about the magnetic and electrical potentials from measurements on subsets of the boundary. Roughly speaking we divide the boundary $\partial\Omega$ in two open subsets, $F$ and $B$. In this setting and if $0$ is not an eigenvalue of $\mathcal{L}_{A,q}$, we define the partial DN map as follows:
\begin{align*}
\Lambda_{A,q}^{B\rightarrow F}:& H_B^{\frac{1}{2}}(\partial \Omega)\rightarrow H^{-\frac{1}{2}}(\partial \Omega)\\
& f \rightarrow (\partial_\nu + iA\cdot \nu)u|_{F},
\end{align*}
 where $\nu$ is the exterior unit normal of $\partial\Omega$, the set $H_B^{\frac{1}{2}}(\partial \Omega)$ consists of all $f\in H^{\frac{1}{2}}(\partial \Omega)$ such that $\supp f \subset \overline{B}$ (we will call this condition \textquotedblleft support constraint \textquotedblright) and  $u\in H^{1}(\Omega)$ is the unique solution of the following Dirichlet problem:
 \begin{equation}
\label{artif}
     \begin{cases}
            \mathcal{L}_{A,q}u=0  & \text{ in } \Omega \\ u|_{\d \Omega} = f.
     \end{cases}
\end{equation}
In the cases that $F$ or $B$ are not equal to $\partial\Omega$, we say that that  the inverse boundary value problem   has partial data. According to the choice  of the sets $F$ and  $B$, we can distinguish  several types of  partial data results  that we briefly describe.\\

In the absence of a magnetic potential ($A\equiv 0$), the pioneering work, which   we describe  as illuminating $\Omega$  from infinity was obtained by Bukgheim and Uhlmann \cite{BU}. They consider  a direction $\xi\in S^{n-1}$   and  $F\subset \partial\Omega$  to be   a neighborhood of  the $\xi$-illuminated face or front region, defined  as
\begin{equation}\label{omm}
\partial \Omega_{-,0}(\xi)= \left \{    x\in \partial\Omega  \; : \; \left \langle \xi, \nu(x) \right \rangle < 0 \right \}.
\end{equation}
In their  work they considered $B=\partial\Omega$. They obtained the identifiability result: if $\Lambda_{0,q_1}= \Lambda_{0,q_2}$ then $q_1=q_2$. The corresponding stability estimates were derived by Heck and Wang \cite{HW}. Later, Kenig, Sjostrand and Uhlmann \cite{KSU} obtained  a similar result when  $F$ and $B$ are neighborhoods  respectively   of the illuminated  and shadow boundary regions of $\Omega$ from  a point $x_0$ (out of the convex hull of $\Omega$), which are defined  by
\begin{equation}\label{cset}
 \partial \Omega_{-,0}(x_0)= \left \{    x\in \partial\Omega  \; : \; \left \langle x-x_0, \nu(x) \right \rangle < 0 \right \}
\end{equation}
and 
\[
 \partial \Omega_{+,0}(x_0)= \left \{    x\in \partial\Omega  \; : \; \left \langle x-x_0, \nu(x) \right \rangle > 0 \right \}.
\]
Notice that in this case if the domain is  strictly convex then $F $ could be arbitrary small.\\
 
In the  case of illuminating from infinity the supporting  set $B$ could also be restricted to  a neighborhood of the  shadow  region  from infinity. In the case  of $A=0$ the corresponding stability estimates with the support constraint were derived by  Caro, Dos Santos Ferreira and Ruiz \cite{CDSFR}, using Radon transform and for illumination from a point without the support constraint in \cite{CDSFR1} by using the  geodesic ray transform on the sphere.  In both cases, they obtained log log-estimates.\\

On the other hand, as it was noted in \cite{Sun}, in the presence of a magnetic potential ($A\not \equiv 0$) there exists a gauge invariance of the DN map. To be specific, if $\varphi\in C^1(\overline{\Omega})$ is a real valued function with $\varphi |_{\partial\Omega}=0$, then $\Lambda_{A,q}= \Lambda_{A+\nabla \varphi, q}$. Hence for the identifiability problem we only  expect  to prove that $dA_1=dA_2$ and $q_1=q_2$. Here we consider the magnetical potential $A$ as a 1-form as follows
\[
A=\sum_{j=1}^{n}A_jdx_j, \quad A= (A_1, A_2, \ldots, A_n) 
\]
 and
 \[
 dA= \sum_{1\leq j<k\leq n}\left( \partial_{x_j}A_k-\partial_{x_k}A_j  \right) d_{x_j}\wedge d_{x_k}.
 \]

We mention the results concerning to full data, that is to say, when $F=B=\partial\Omega$. Sun proved identifiability under the assumption of the smallness of  the magnetic potential in a suitable space \cite{Sun}. In \cite{NSU} the smallness was removed for $C^2$ and compactly supported magnetic potential and $L^\infty$ electrical potential. Finally, these results were extended by Krupchyk and Uhlmann \cite{KU} for both, magnetic and electrical potentials in $L^{\infty}$.\\

The identifiability result in  the  case of illumination from  a point for $B=\partial \Omega$ and in the presence of  a magnetic potential is due  to Dos Santos Ferreira, Kenig, Sjostrand  and Uhlmann \cite{DSFKSjU}.  It was extended  by Chung \cite{Ch} to the  case where the support constraint is a neighborhood $B$ of the shadow boundary.\\

To the best of our knowledge, the only stability result  with partial data in the presence of a magnetic potential was obtained by Tzou \cite{Tz}. He considered complete data and also partial data from infinity  without the support constraint. He obtained $\log\log$-stability estimates.\\

The main goal of this article is to derive stability estimates for the case  of Bukhgeim and Uhlmann in the presence of a magnetic potential with the  additional  support constraint on $B$, a neighborhood of the shadow boundary from infinity.\\

We denote by $C_i$ ($i\in \mathbb{Z}^+$) a positive constants which might change from formula to formula. This constants should depend only on $n, \Omega$ and the priori bounds for magnetic and electrical potentials.\\

Before stating our results we introduce some notation following \cite{CDSFR}. Given a direction $\xi\in S^{n-1}$ and $\epsilon \geq 0$, we define the $(\xi, \epsilon)$-illuminated face of $\partial \Omega$ as
\[
\partial \Omega_{-, \epsilon}(\xi)= \left \{    x\in \partial\Omega  \; : \; \left \langle \xi, \nu(x) \right \rangle < \epsilon \right \},
\]
and the $(\xi, \epsilon)$-shadowed face as
\[
\partial \Omega_{+, \epsilon}(\xi)= \left \{    x\in \partial\Omega  \; : \; \left \langle \xi, \nu(x) \right \rangle > -\epsilon \right \},
\]
where $\nu(x)$ denotes the exterior unit normal vector at $x$. Let $N$ be an open subset of $S^{n-1}$ and define the sets
\begin{equation}\label{FNBN}
F_N= \bigcup_{\xi \in N}\partial\Omega_{-,0}(\xi), \quad B_N= \bigcup_{\xi \in N}\partial\Omega_{+,0}(\xi).
\end{equation}
Now let $F$ and $B$ be open neighborhoods on $\partial\Omega$ of $F_N$ and $B_N$, respectively; and  let $\chi$ be a cutoff function supported on $F$ such that it is equals to $1$ on $F_N$. Denote by $H^{1/2}_B(\partial \Omega)$  the set consisting of all the functions $f\in H^{1/2}(\partial\Omega)$ such that $\supp f\subset \overline{B}$.  We define the partial DN map $\Lambda_{A,q}^{\sharp}:H^{1/2}_B(\partial \Omega) \rightarrow H^{-1/2}(\partial \Omega)$, as follows:
\[
\Lambda_{A,q}^{\sharp} f = \chi \Lambda_{A,q} f.
\]
We consider the associated operator norm defined by
\begin{equation}\label{partialnorm}
\left \| \Lambda^{\sharp}_{A,q} \right \|_{H^{1/2}_B(\partial\Omega)\rightarrow H^{-1/2}(\partial\Omega)} = \underset{\underset{  \left \| f \right \|_{H^{1/2}(\partial\Omega)}=1}{f\in H^{1/2}_B(\partial\Omega)  }}{\sup}  \left \| \chi  \Lambda_{A,q} f   \right \|_{H^{-1/2}(\partial\Omega)}.
\end{equation}

As is well known that in order to obtain stability results one needs \textit{a priori} bounds on the magnetic and electrical potentials (conditional stability), to control oscillations. Thus, for $M>0$ and $\sigma\in\left( 0,1/2\right)$, we define the class of admissible magnetic potentials as
\[
\mathscr{A}(\Omega, M)= \left \{ A \in  W^{2, \infty}(\Omega; \mathbb{R}^n)  :  \left \| A \right \|_{W^{2, \infty}} \leq M \right \},
\]
and the class of admissible electrical potentials as
\[
\mathscr{Q}(\Omega, M, \sigma)= \left \{ q \in H^{\sigma} \cap L^\infty(\Omega; \mathbb{R})  :   \left \| q \right \|_{L^\infty} + \left \| q \right \|_{H^\sigma}\leq M \right \}.
\]
We can now formulate our stability results.
\begin{thm}[Stability for the magnetic potential]\label{SMP}
Let $\Omega \subset \mathbb{R}^n$be a simply-connected open bounded set with smooth boundary. Consider a positive constant $M$. Let $N$ be an open subset of $S^{n-1}$ and consider $F$ an open neighborhood of $F_N$, where $F_N$ is defined as (\ref{FNBN}). Then there exist $C>0$(depending on $n, \Omega, M$) and $\lambda \in \left (0,1/2  \right )$ (depending on $n$) such that the following estimate 
\[
\left \|d  (A_1-A_2)   \right \|_{L^2(\Omega)} \leq C \left | \log \left | \log  \left \|\Lambda^{\sharp}_{1}- \Lambda^{\sharp}_{2}    \right \| \right |   \right |^{-\lambda/2}, 
\]
holds true for all $A_1, A_2 \in \mathscr{A}(\Omega, M)$ satisfying $A_1=A_2$ on $\partial\Omega$; and all $q_1,q_2\in L^{\infty}(\Omega)$. 
\end{thm}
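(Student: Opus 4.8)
The plan is to follow the complex geometric optics (CGO) strategy behind the identifiability results \cite{BU,DSFKSjU}, made quantitative and adapted to the support constraint as in \cite{CDSFR}. Throughout, $h>0$ is a small semiclassical parameter and $\omega$ ranges over the open set $N\subset S^{n-1}$.

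\textbf{Step 1: CGO solutions and an integral identity.} For the linear (limiting Carleman) weight $\varphi(x)=x\cdot\omega$, I would construct solutions
\[
u_j=\e^{\frac{1}{h}\rho_j\cdot x}\left(a_j+r_j\right),\qquad \rho_j\cdot\rho_j=0,\quad j=1,2,
\]
where $u_1$ solves $\mathcal{L}_{A_1,q_1}u_1=0$, $u_2$ solves the transposed equation associated with $(A_2,q_2)$, $\re\rho_1=-\re\rho_2$ is proportional to $\omega$, and the imaginary parts are tuned so that the product $u_1\overline{u_2}$ oscillates at a prescribed frequency $\xi\perp\omega$. The amplitudes solve the magnetic transport equations $\rho_j\cdot(\nabla+iA_j)a_j=0$, solved by $a_j=\e^{-i\Phi_j}$ with $\rho_j\cdot\nabla\Phi_j=\rho_j\cdot A_j$; the hypotheses that $\Omega$ is simply connected and $A_1=A_2$ on $\d\Omega$ are what let me fix these gauge phases consistently. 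The remainders satisfy $\|r_j\|_{L^2}=O(h)$ by the solvability estimate dual to the Carleman estimate. Testing \eqref{artif} against $u_2$ and integrating by parts yields an Alessandrini-type identity of the schematic form
\[
\int_\Omega\left[(A_1-A_2)\cdot X+(q_1-q_2)\,u_1\overline{u_2}\right]\dd x=\left\langle(\Lambda^\sharp_{1}-\Lambda^\sharp_{2})f,g\right\rangle+\mathcal{E},
\]
where $X$ is a first-order expression in $u_1,\overline{u_2}$, the data $f=u_1|_{\d\Omega}$ is arranged to satisfy the support constraint $\supp f\subset\overline{B}$, and $\mathcal{E}$ gathers the boundary contributions over the unmeasured part of $\d\Omega$.

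\textbf{Step 2: partial data via a boundary Carleman estimate.} This is where the geometry enters. Since $\e^{\pm\varphi/h}$ decays on the shadow, respectively illuminated, faces, the support constraint on $B$ together with the cutoff $\chi$ localising the measurement to $F$ allows a Carleman estimate carrying boundary terms to absorb the integrals in $\mathcal{E}$ coming from $\d\Omega\setminus F$. The outcome is a bound of the type $|\mathcal{E}|\le C\,\e^{C/h}\,\|\Lambda^\sharp_{1}-\Lambda^\sharp_{2}\|+o(1)$ as $h\to0$: the unmeasured error is controlled by the partial operator norm \eqref{partialnorm}, at the cost of an exponential factor in $1/h$.

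\textbf{Step 3: recovery of $d(A_1-A_2)$.} Letting $h\to0$, the electric term $\int(q_1-q_2)u_1\overline{u_2}$ stays bounded while the magnetic term, after the transport normalisation, converges to the Fourier transform of the $1$-form $A_1-A_2$ contracted with the complex frequency. By the gauge invariance recalled in the introduction only the antisymmetric part survives; using two families of phases with independent transverse frequencies and subtracting isolates the Fourier transform $\widehat{\alpha}(\xi)$ of the $2$-form $\alpha:=d(A_1-A_2)$ at frequencies $\xi\perp\omega$. Letting $\omega$ vary over $N$ and reducing to the Radon transform as in \cite{CDSFR}, this determines, with a quantified error, the Radon transform of $\alpha$ over a neighbourhood of the relevant family of hyperplanes.

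\textbf{Step 4: quantitative stability.} Finally I would truncate at frequency $|\xi|\le\tau$ (coupling $\tau\sim1/h$): the high-frequency tail of $\widehat{\alpha}$ is absorbed by the a priori bound $A_j\in\mathscr{A}(\Omega,M)$, and the low-frequency part is estimated by the identity of Steps 1--2, giving schematically $\|\alpha\|_{L^2}\lesssim \tau^{n/2}\,\e^{C/h}\,\|\Lambda^\sharp_{1}-\Lambda^\sharp_{2}\|^{1/2}+\tau^{-s}$. Optimising $\tau$ (equivalently $h$) against the exponential amplification produces a single-logarithm estimate for the recovered transform data, and converting this into the $L^2$-estimate for $\alpha$ through a second interpolation/optimisation against the a priori class produces the stated $\log\log$-bound with some $\lambda\in(0,1/2)$. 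I expect the main obstacle to be precisely this bookkeeping: tracking the joint $(h,\tau)$-dependence through the boundary Carleman estimate and the transform-inversion so that the two nested optimisations compose into exactly $|\log|\log\|\Lambda^\sharp_{1}-\Lambda^\sharp_{2}\|||^{-\lambda/2}$, while ensuring the magnetic CGO construction remains compatible with the support constraint so that $\mathcal{E}$ is genuinely bounded by \eqref{partialnorm}.
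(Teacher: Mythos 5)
Your proposal follows the same overall architecture as the paper: support-constrained CGO solutions plus a boundary Carleman estimate to absorb the unmeasured boundary terms at exponential cost in $\tau\sim 1/h$, Alessandrini's identity, a single-log estimate for the (smeared) Radon transform of $d(A_1-A_2)$ over the restricted family of hyperplanes, and finally the restricted-data Radon stability theorem of \cite{CDSFR} supplying the second logarithm. However, two of your steps have genuine gaps. The first is the support constraint itself: in Step 1 you say the trace $f=u_1|_{\partial\Omega}$ is ``arranged'' to lie in $H^{1/2}_B(\partial\Omega)$, and in your closing paragraph you defer this as ``the main obstacle''; but this is precisely the paper's principal new technical work, and it does not come from the standard ansatz $e^{\rho\cdot x/h}(a+r)$, whose trace has no reason to vanish near the illuminated region. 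The paper proves a new Carleman estimate with the linear weight $e^{\tau\xi\cdot x}$, in the $H^{-1}(\widetilde{\Omega})$ norm and adapted to vanishing on a compact set $E\subset F_N\setminus Z_N$ (Theorem \ref{icar}, obtained by adapting Chung's logarithmic-weight estimate \cite{Ch} after flattening $E$ by a change of variables, or by a Wolff-type limit of illumination points going to infinity), and then builds $u_1$ with an extra correction term $-e^{\tau l}b$ so that $u_1|_E=0$ (Theorem \ref{Zs}); the decay of the real part of $l$ recorded in (\ref{kapa}) is what keeps this correction harmless in the later estimates (\ref{ceroestimate})--(\ref{cuatroestimate}). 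Without this ingredient $\Lambda^{\sharp}_j$ cannot even be applied to $f$, so your bound of $\mathcal{E}$ by the norm (\ref{partialnorm}) is not meaningful.

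The second gap is the displayed inequality of Step 4. Estimating ``the low-frequency part'' of $\widehat{\alpha}$ by the identity of Steps 1--2 presumes control of $\widehat{\alpha}(\xi)$ for all $|\xi|\le\tau$, but with $\omega$ restricted to $N$ the identity only reaches frequencies in directions belonging to $M=\bigcup_{\xi\in N}[\xi]^{\perp}$ (see (\ref{M})), a proper open subset of $S^{n-1}$ when $N$ is small; the full low-frequency portion of $\|\alpha\|_{L^2}$ is therefore not accessible, and the displayed bound (which, after optimisation, would give a single-log estimate for $\|d(A_1-A_2)\|_{L^2}$) cannot be established this way. The correct mechanism --- which your subsequent sentence gestures at but the display contradicts --- is the paper's: a single-log bound for the restricted Radon data in $H^{-3}(\mathbb{R};L^{\infty}(M))$, obtained via the antisymmetrization $\mu_{i,j}=\theta_i e_j-\theta_j e_i$ and the derivative identity (\ref{properties}), interpolated against the a priori bound (\ref{Rfequation}) in $H^{(n-1)/2}(\mathbb{R};L^2(M))$, followed by Theorem \ref{cdsr}, whose quantitative analytic continuation across the missing directions is exactly what produces the outer logarithm. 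A smaller omission of the same kind: passing from $\int_{\Omega}(A_1-A_2)\cdot(\xi+i\zeta)\,e^{\Phi_1+\overline{\Phi}_2}g$ to $\int_{\Omega}(A_1-A_2)\cdot(\xi+i\zeta)\,g$ is not a mere ``transport normalisation'' but requires the Krupchyk--Uhlmann lemma (Lemma \ref{KyUh}), and this is exactly where the hypothesis $A_1=A_2$ on $\partial\Omega$ is used, to extend $A_1-A_2$ by zero so that the transport equation holds on all of $\mathbb{R}^n$.
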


 \begin{thm}[Stability for the electrical potential]\label{SEP}
Let $\Omega \subset \mathbb{R}^n$ be a simply-connected open bounded set with connected smooth boundary and consider two positive constants $M$ and $\sigma\in\left( 0,1/2 \right)$.  Let $N$ be an open subset of $S^{n-1}$ and consider $F$ an open neighborhood of $F_N$, where $F_N$ is defined as (\ref{FNBN}). Then there exist $C>0$(depending on $n, \Omega, M$) and $\lambda \in \left (0,1/2  \right )$ (depending on $n$) such that the following estimate
\[
\left \| q_1-q_2  \right \|_{L^2(\Omega)} \leq C \left | \log \left |  \log \left | \log  \left \|\Lambda^{\sharp}_{1}- \Lambda^{\sharp}_{2}    \right \| \right |  \right | \right |^{-\lambda/2}, 
\]
holds true for all $A_1, A_2 \in \mathscr{A}(\Omega, M)$ satisfying $A_1=A_2$ on $\partial\Omega$; and all $q_1, q_2 \in \mathscr{Q}(\Omega, M,\sigma)$.
\end{thm}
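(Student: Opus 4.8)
The plan is to derive Theorem \ref{SEP} from the magnetic estimate of Theorem \ref{SMP} by running a second, finer complex geometric optics (CGO) argument in which the already controlled magnetic field enters only as an error term. The first move is to normalize the gauge. Since $\Omega$ is simply connected, its boundary is connected, and $A_1=A_2$ on $\partial\Omega$, a Hodge/Friedrichs decomposition produces $\psi\in H^{1}(\Omega)$ with $\psi|_{\partial\Omega}=0$ such that $\widetilde A:=A_1-(A_2+\nabla\psi)$ satisfies $\|\widetilde A\|_{L^2(\Omega)}\lesssim\|d(A_1-A_2)\|_{L^2(\Omega)}$. By the exact gauge invariance $\Lambda_{A_2,q_2}=\Lambda_{A_2+\nabla\psi,\,q_2}$ (valid because $\psi$ vanishes on $\partial\Omega$), replacing $A_2$ by $A_2+\nabla\psi$ alters neither the operator norm $\|\Lambda_1^{\sharp}-\Lambda_2^{\sharp}\|$ nor the quantity $q_1-q_2$ to be estimated, so from now on the two magnetic potentials differ by $\widetilde A$, whose $L^2$ norm is bounded by $\|d(A_1-A_2)\|_{L^2}$ and hence, through Theorem \ref{SMP}, by $C\,|\log|\log\|\Lambda_1^{\sharp}-\Lambda_2^{\sharp}\|\,|\,|^{-\lambda/2}$.

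Next I would pair a CGO solution $u_1$ of $\mathcal{L}_{A_1,q_1}u_1=0$ against a CGO solution $u_2$ of the formal adjoint $\mathcal{L}_{A_2,q_2}^{*}u_2=0$, both of the form $u_j=e^{\rho_j\cdot x/h}(a_j+r_j)$ with $\rho_j\cdot\rho_j=0$ and $|\rho_j|\sim 1$, the amplitudes $a_j$ solving the first transport equation and the remainders $r_j$ controlled in the semiclassical $H^1$ norm by the Carleman estimate of the preceding sections. Green's identity then yields a bilinear identity whose left-hand side is $\int_\Omega[\widetilde A\cdot(\cdots)+(q_1-q_2)]u_1\overline{u_2}\,dx$ and whose right-hand side is a boundary pairing against $\Lambda_1^{\sharp}-\Lambda_2^{\sharp}$ plus the contribution of the part of $\partial\Omega$ on which the data is not prescribed. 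Choosing the phases so that $\re(\rho_1+\rho_2)$ encodes a test frequency $\xi\in\R^n$ while the orthogonal imaginary parts sweep directions of $N$, and letting $h\to 0$, extracts the Fourier (Radon) transform of $q_1-q_2$ at $\xi$ modulo an $O(h)$ amplitude remainder and the magnetic term, which by the previous paragraph is an error governed by $\|\widetilde A\|_{L^2}$.

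The decisive partial-data step is to discard the boundary integral over the face where no data is available, which I would carry out exactly as in the illumination-from-infinity scheme of Caro, Dos Santos Ferreira and Ruiz \cite{CDSFR}: the limiting Carleman weight is chosen so that $\re(\rho_j\cdot\nu)$ has a favorable sign on the shadow/illuminated split, making the CGO solutions exponentially small on the uncontrolled face, with the cutoff $\chi$ and the support constraint on $B$ absorbing the remaining terms. This yields, for every $\xi$ with $|\xi|\le R$, a frequency-localized bound of the schematic form $|\widehat{(q_1-q_2)}(\xi)|\lesssim e^{CR/h}\|\Lambda_1^{\sharp}-\Lambda_2^{\sharp}\|+h+\|\widetilde A\|_{L^2}$. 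Splitting $\|q_1-q_2\|_{L^2}^2$ into the low frequencies $|\xi|\le R$, estimated by the previous display, and the high frequencies $|\xi|>R$, estimated by $R^{-2\sigma}\|q_1-q_2\|_{H^\sigma}^2\le(2M)^2R^{-2\sigma}$ using the admissible class $\mathscr{Q}(\Omega,M,\sigma)$, and then optimizing first in $h$ and then in $R$ produces the final rate.

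The main obstacle, and the source of the third logarithm, is the propagation of the magnetic error: the term $\|\widetilde A\|_{L^2}$ is itself only doubly-logarithmically small by Theorem \ref{SMP}, and feeding this bound into the optimization over $R$ and $h$ — where it competes with the exponential amplification $e^{CR/h}$ and the polynomial tail $R^{-2\sigma}$ — costs exactly one further logarithm compared with the magnetic case. Keeping all constants uniform over the admissible classes while tracking the interplay between the Carleman amplification, the amplitude remainder, and the inherited magnetic error is the delicate bookkeeping of the argument; the gauge normalization and the correct sign of the boundary Carleman term are what make the scheme close.
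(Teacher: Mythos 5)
Your overall architecture (gauge normalization via a Hodge decomposition, Alessandrini-type pairing of CGO solutions, treating the magnetic difference as an inherited error bounded through Theorem \ref{SMP}) matches the paper's strategy. But there is a genuine gap at the decisive step: you claim that the bilinear identity yields $\bigl|\widehat{(q_1-q_2)}(\xi)\bigr|\lesssim e^{CR/h}\|\Lambda_1^{\sharp}-\Lambda_2^{\sharp}\|+h+\|\widetilde A\|_{L^2}$ for \emph{every} frequency $\xi$ with $|\xi|\le R$, and you then close with a low/high frequency splitting against $R^{-2\sigma}\|q_1-q_2\|_{H^\sigma}^2$. This is not achievable in the present partial-data setting. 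The illumination directions are restricted to the (possibly very small) open set $N\subset S^{n-1}$, and the transport equation $(\xi+i\zeta)\cdot\nabla g=0$ forces any Fourier mode placed in the amplitude to have frequency orthogonal to both $\xi$ and $\zeta$ with $\xi\in N$. Consequently the accessible frequencies form only the cone over $M=\bigcup_{\xi\in N}[\xi]^{\perp}$, not a full ball: there is a whole cone of low frequencies on which you have no information at all, and the high-frequency tail estimate cannot compensate for that. This is exactly why the paper does not use Fourier inversion: it only controls the pairings $\int_{\mathbb{R}}\widetilde g(s)\,(\pmb{R}[\chi_\Omega(q_1-q_2)])(s,\theta)\,ds$ for $\theta\in M$, i.e.\ the Radon transform with \emph{restricted} directions, and must then invoke the quantitative analytic-continuation result of Caro--Dos Santos Ferreira--Ruiz (Theorem \ref{cdsr}) to pass from restricted Radon data to an $L^2(\Omega)$ bound.

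This omission also makes your logarithm count internally inconsistent. In your scheme the final inversion is a polynomial optimization (balancing $R^{n/2}$ against $R^{-\sigma}$), which costs only a constant factor in the exponent, not a logarithm; feeding in $\|\widetilde A\|_{L^2}\sim\bigl|\log|\log\|\Lambda_1^{\sharp}-\Lambda_2^{\sharp}\||\bigr|^{-\lambda/2}$ would therefore produce a $\log\log$ rate for $q_1-q_2$, not the claimed $\log\log\log$. In the paper the three logarithms arise as follows: the $\tau$-optimization against $e^{4\tau c}\|\Lambda_1^{\sharp}-\Lambda_2^{\sharp}\|$ is dominated by the inherited magnetic error, so the linear functionals $\int_\Omega(q_1-q_2)\overline g$ are only $\log\log$ small (Proposition \ref{removepe}); the third logarithm is then added by the restricted-data Radon stability estimate, which converts a bound $\epsilon$ on the Radon data into a bound $|\log\epsilon|^{-\lambda/2}$ on the function. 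A secondary but real issue is your gauge step: $\psi\in H^1_0$ with only $\|\widetilde A\|_{L^2}$ control is too weak. The paper needs $\omega\in W^{3,p}$, $p>n$, with Morrey embedding and $L^p$-interpolation so that $\|A_1-A_2-\nabla\omega\|_{L^\infty}$ is controlled; this $L^\infty$ bound is indispensable both in the bilinear estimates (the magnetic difference multiplies CGO solutions that are merely $H^1$) and in removing the factor $e^{\Phi_1+\overline{\Phi}_2+i\omega}$ via the bound $\|\Phi_1+\overline{\Phi}_2+i\omega\|_{L^\infty}\lesssim\|A_1-A_2-\nabla\omega\|_{L^\infty}$.
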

\begin{rem}
The simple connectedness hypotheses are needed to use the Carleman estimate derived in \cite{Ch} and the Hodge decomposition obtained in \cite{Tz}. In previous works on stability from infinity, different norms of the Dirichlet-Neumann map have been used. For instance, in \cite{Tz} Tzou considered the DN map $\Lambda: H^{3/2}(\partial\Omega) \rightarrow H^{1/2}(F)$ and the corresponding distance $\left \| \Lambda_1- \Lambda_2  \right \|_{H^{3/2}(\partial\Omega)\rightarrow H^{1/2}(F)}$. Caro, Dos Santos Ferreira and Ruiz \cite{CDSFR} considered the difference of  DN maps $\Lambda_1-\Lambda_2: \mathcal{H}(\partial\Omega)\rightarrow \mathcal{H}(\partial\Omega)^*$, where $ \mathcal{H}(\partial\Omega)$ denotes the range of the trace map $Tr: \left \{u\in L^2(\Omega): \Delta u\in L^2(\Omega) \right \}\rightarrow H^{-1/2}(\partial \Omega)$, and the corresponding distance as:
\[
\left \| \Lambda_1-\Lambda_2 \right \|_{B\rightarrow F} = \underset{Y}{\sup}  \left |  (\left \langle \Lambda_1-  \Lambda_2)u_B, u_F  \right \rangle    \right |,
\]
where
\[
Y=\left \{(u_B, u_F) \in \mathcal{H}(\partial\Omega)^2: \left \| u_B \right \|= \left \| u_F \right \|=1, \, u_B \in \mathcal{E}^\prime(B), \, u_F \in \mathcal{E}^\prime(F)  \right \}.
\]
In the present work we use the natural one (\ref{partialnorm}).
\end{rem}

The proofs of these theorems will be carried out by combining CGO solutions having the support constraint (which will be constructed by using a Carleman estimate with linear weights) and the CGO solutions constructed by Dos Santos Ferreira, Kenig, Sj\"ostrand and Uhlmann \cite{DSFKSjU} (which do not need to have the support constraint). The extra logarithm for the magnetic potentials of the Theorem \ref{SMP} comes from the estimate of the Radon transform obtained in \cite{CDSFR}. To obtain the stability of the electrical potentials, that is Theorem \ref{SEP}, we will use an extra argument. We use the Hodge decomposition derived in \cite{Tz} and the gauge invariance of the DN map in order to use the already established stability estimate for the magnetic potentials. This step involves $\log\log$ of the difference of the partial DN maps, and again by using local estimates for the Radon transform an extra logarithm has to be introduced.\\


There is another kind of partial data, the sometimes called local IBVP. The measurements are taken on subsets  of the boundary for functions supported on the same subsets, called the accessible part of the  boundary ($F=B$ in our notation). In this case and in the absence of the magnetic potential ($A\equiv 0$), the    identifiability was obtained by Isakov \cite{Is} assuming that the inaccessible part of the boundary is either part of a plane or a sphere. In this case stability estimates  were obtained by Heck and Wang \cite{HW1} and only requiere a $log$ in the  estimates.  Similarly, Caro \cite{Ca11} derived a $log$-stability estimate for an IBVP with local data for the Maxwell equation under the same flatness condition. We believe that also  our $ log\, log$ stability results should be improved to just one $log$. As it was proved by Mandache in \cite{Ma} the $\log$ is the best stability modulus that one can  expect. Unfortunately for all kinds of partial data, except the mentioned  local problem with restricted geometry, the known stability is $\log \log$  .\\


This paper is organized as follows. In the section $2$ we prove Theorem \ref{SMP}. In the section $3$ we prove Theorem \ref{SEP}.
\section{Stability estimate for the magnetic potential}

\subsection{CGO solutions and Carleman estimates}

In this section we shall establish the existence of  CGO solutions with the required support constraint for the magnetic Schr\"odinger operator, see Theorem \ref{Ecompact}. For this purpose we introduce some notation. Denote by $Z_N$ the set
\[
Z_N= \underset{\xi \in N }{\bigcup}   \left \{ x\in \partial\Omega  \; : \;  \left \langle \xi, \nu(x)  \right \rangle=0     \right \}
\]
and 
\begin{equation}\label{Ecompact}
E \;  \mbox{be a compact subset of} \;  F_N\setminus Z_N. 
\end{equation}

\begin{thm}\label{Zs}
Let $\Omega\subset \mathbb{R}^n$ be a bounded open set with smooth boundary. Let $\xi, \zeta  \in S^{n-1}$ be a pair of orthonormal vectors and consider $E$ defined by (\ref{Ecompact}). If $A_1\in C^2(\overline{\Omega}; \mathbb{R}^n)$ and $q_1\in L^\infty(\Omega)$, then there exist two positive constants $\tau_0$ and $C$ (both depending on $n, \Omega, \left \| A_1 \right \|_{C^2}, \left \| q_1 \right \|_{L^\infty}$) such that the equation
 \begin{align*}
     \begin{cases}
            \mathcal{L}_{A_1,q_1}u=0  & \text{ in } \Omega \\ u|_{ E} = 0
     \end{cases}
\end{align*}
has a solution $u_1\in H^1(\Omega)$ of the form
\[
u_1= e^{\tau(\xi\cdot x + i\zeta\cdot x)}\left( e^{\Phi_1} + r_1  \right)-e^{\tau l}b,
\]
with the following properties: 
\begin{item}
\item[(i)] The function  $\Phi_1$ satisfies in $\Omega$
\begin{equation}\label{Phiuno}
(\xi+i\zeta)\cdot \nabla \Phi_1+i (\xi+i\zeta)\cdot A_1 =0.
\end{equation}
and
\begin{equation}
\left \| \Phi_1  \right \|_{W^{\alpha, \infty}}\leq C \left \| A_1 \right \|_{C^{\alpha}(\Omega)} \; , \; \left | \alpha \right |\leq 1.
\end{equation}
\item[(ii)] The function $l$ depends on the a priori bounds of $A_1$ and $q_1$, and  satisfies
\[
\Re l(x)= \xi\cdot x -k(x), \Im l(x)= \zeta\cdot x + \widetilde{k}(x)
\]
where $k(x) \simeq dist (x, E)$ and $\left | \widetilde{k}(x)  \right | \simeq dist(x, E)$;  in $G$, a neighborhood of $E$ on $\mathbb{R}^n$.
\item[(iii)]  The function $b$ is twice continuously differentiable on $\Omega$ with $\supp b\subset G$; and it  depends on the a priori bounds of $A_1$ and $q_1$.
\item[(iv)] Finally, $r_1\in H^1(\Omega)$ satisfies $r|_{ E}=0$ and for all $\tau \geq \tau_0$ the following estimates hold true
\begin{align*}
 \left \|\partial^{\alpha} r_1  \right \|_{L^2(\Omega)} \leq C \tau ^{ \left | \alpha \right |-1} &, \left |\alpha  \right |	\leq 1,\\
 \left \|  r\right \|_{L^2(\partial\Omega)} \leq C \tau^{-1/2}.
\end{align*}
\end{item}
Moreover, we have 
\begin{equation}\label{bou}
\left \| l  \right \|_{H^1(\Omega)} \leq C, \quad \left \| b  \right \|_{H^1(\Omega)} \leq C
\end{equation}
and
\begin{equation}\label{kapa}
\left \|e^{-\tau k}  \right \|_{L^2(\Omega)} \leq C\tau^{-1/2}, \quad  \left \|e^{-\tau k}  \right \|_{L^\infty(\Omega)} \leq C.
\end{equation}
\end{thm}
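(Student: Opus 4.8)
The plan is to build $u_1$ by the usual conjugation--transport--remainder scheme, adapted so that the trace on $E$ is cancelled by a localized boundary layer $e^{\tau l}b$. Write $\rho=\xi+i\zeta$; since $\xi,\zeta$ are orthonormal, $\rho\cdot\rho=|\xi|^2-|\zeta|^2+2i\,\xi\cdot\zeta=0$, so conjugating $-\Delta$ by $e^{\tau\rho\cdot x}$ removes the $\tau^2$ term and leaves the transport operator $-2\tau\,\rho\cdot\nabla$. Collecting the remaining $O(\tau)$ contributions (including the first-order magnetic terms $A\cdot D+D\cdot A$) and applying them to $e^{\Phi_1}$ produces $-2\tau e^{\Phi_1}\bigl(\rho\cdot\nabla\Phi_1+i\,\rho\cdot A_1\bigr)$, which vanishes exactly when $\Phi_1$ solves (\ref{Phiuno}). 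In the coordinates dual to $(\xi,\zeta)$ on the plane $\mathrm{span}(\xi,\zeta)$ the operator $\rho\cdot\nabla$ is a multiple of the Cauchy--Riemann operator $\d_s+i\d_t$, so after extending $A_1$ to a compactly supported $C^2$ field I solve (\ref{Phiuno}) by the solid Cauchy transform in that plane, obtaining $\Phi_1$ with $\|\Phi_1\|_{W^{\alpha,\infty}}\leq C\|A_1\|_{C^\alpha}$ for $|\alpha|\leq 1$, which is property (i).

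Next I would construct the boundary layer. I fix a neighborhood $G$ of $E$ in $\R^n$ small enough that $\mathrm{dist}(\cdot,E)$ is smooth on $G$ and that the front condition $\langle\xi,\nu\rangle<0$ persists on $G\cap\partial\Omega$; this is possible because $E\subset F_N\setminus Z_N$ is compact and bounded away from the glancing set $Z_N$. I take $l$ to be a complex modification of the linear phase $\rho\cdot x$ vanishing on $E$, namely $l=\rho\cdot x+(-k+i\widetilde k)$, with $k,\widetilde k$ built from $\mathrm{dist}(\cdot,E)$ so that $\re l=\xi\cdot x-k$, $\im l=\zeta\cdot x+\widetilde k$ and $k\simeq|\widetilde k|\simeq\mathrm{dist}(\cdot,E)$, giving (ii). Crucially $k\geq 0$, so $e^{\tau(l-\rho\cdot x)}=e^{-\tau k}e^{i\tau\widetilde k}$ decays off $E$ and (\ref{kapa}) follows from $\|e^{-\tau k}\|_{L^\infty}\leq 1$ together with $\int_\Omega e^{-2\tau k}\,dx\leq C\tau^{-1}$ (straightening the level sets of $k$ and using $\int_0^\infty e^{-2\tau s}\,ds=C\tau^{-1}$). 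The amplitude $b$ is a WKB amplitude solving a transport equation along $\nabla l$ with Cauchy datum $b|_E=e^{\Phi_1}|_E$, cut off to be supported in $G$; this yields (iii) and the $H^1$ bounds (\ref{bou}).

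Finally I would determine $r_1$. Substituting $u_1=e^{\tau\rho\cdot x}(e^{\Phi_1}+r_1)-e^{\tau l}b$ into $\mathcal{L}_{A_1,q_1}u_1=0$ and using the transport cancellation reduces everything to
\[
L_\tau r_1=g,\qquad L_\tau:=e^{-\tau\rho\cdot x}\,\mathcal{L}_{A_1,q_1}\,e^{\tau\rho\cdot x},
\]
where $g=-e^{-\tau\rho\cdot x}\mathcal{L}_{A_1,q_1}(e^{\tau\rho\cdot x}e^{\Phi_1})+e^{-\tau\rho\cdot x}\mathcal{L}_{A_1,q_1}(e^{\tau l}b)$. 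The first term is $O(1)$ in $L^2(\Omega)$ after the transport cancellation, and the second is supported in $G$ and controlled through the decay $e^{-\tau k}$ combined with the eikonal and transport choices for $l,b$. I would then invoke the Carleman estimate with linear weight $\xi\cdot x$ from \cite{Ch} in its version carrying boundary terms: the sign of $\langle\xi,\nu\rangle$ on the front region lets me solve $L_\tau r_1=g$ under the homogeneous constraint $r_1|_E=0$ by a Hahn--Banach/duality argument, and the boundary term delivers both the interior bounds $\|\d^\alpha r_1\|_{L^2(\Omega)}\leq C\tau^{|\alpha|-1}$ for $|\alpha|\leq 1$ and the trace bound $\|r_1\|_{L^2(\partial\Omega)}\leq C\tau^{-1/2}$, i.e. property (iv). Since $r_1|_E=0$ and $b|_E=e^{\Phi_1}|_E$, while $l|_E=\rho\cdot x$, one checks $u_1|_E=0$, completing the construction.

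I expect the main obstacle to lie in the last step: simultaneously keeping $g$ bounded in $L^2(\Omega)$ uniformly in $\tau$, which forces the geometric-optics data $l,b$ to be tuned so that the $\tau$-growth of $\mathcal{L}_{A_1,q_1}(e^{\tau l}b)$ is exactly absorbed by the decay $e^{-\tau k}$ off $E$ (this is where $\nabla l\cdot\nabla l$ and the subprincipal symbol must vanish to the right order on $E$), and extracting from the Carleman estimate enough boundary control on the front region to impose $r_1|_E=0$ with the stated $\tau^{-1/2}$ trace bound. Once these are in place, verifying the matching $u_1|_E=0$ and the uniformity of all constants is routine.
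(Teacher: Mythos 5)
Your overall skeleton --- the ansatz $u_1=e^{\tau\rho\cdot x}(e^{\Phi_1}+r_1)-e^{\tau l}b$, solving (\ref{Phiuno}) by viewing $\rho\cdot\nabla$ as a $\overline{\partial}$-operator, and building the boundary layer $l,b$ with $k\simeq|\widetilde{k}|\simeq \mathrm{dist}(\cdot,E)$, $b|_E=e^{\Phi_1}|_E$ and the decay estimates (\ref{kapa}) --- coincides with the paper's outline, which defers the construction of $l$ and $b$ to Proposition $9.2$ of \cite{Ch}. The genuine gap is in your last step, the construction of $r_1$, which you yourself flag as the ``main obstacle'' without resolving it. You propose to produce $r_1$ by duality against a linear-weight Carleman estimate ``with boundary terms'' attributed to \cite{Ch}. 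No such estimate is available there: Chung's estimates are for the logarithmic weight, and the linear-weight estimate with boundary terms used in this paper (Proposition \ref{PCe}, from \cite{DSFKSjU}) is an $L^2(\Omega)\to L^2(\Omega)$ estimate for functions in $H^1_0(\Omega)$. Duality against an $L^2\to L^2$ estimate yields a remainder that is merely in $L^2(\Omega)$: it delivers neither the gradient bound $\left\|\nabla r_1\right\|_{L^2(\Omega)}\leq C$ (the $|\alpha|=1$ case of (iv)) nor the trace bound $\left\|r_1\right\|_{L^2(\partial\Omega)}\leq C\tau^{-1/2}$, and these are exactly what the magnetic problem requires, since $Du_1$ enters Alessandrini's identity (\ref{al}) and the terms $M_1,M_2$ later on. This is precisely the point of Remark \ref{Chuu}: the KSU-type $L^2\to L^2$ estimate suffices only when $A\equiv 0$. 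Moreover, your mechanism for imposing $r_1|_E=0$ (``the sign of $\langle\xi,\nu\rangle$ on the front region'') is not substantiated; in a boundary-term duality argument the vanishing of the constructed solution appears on the portion of $\partial\Omega$ whose boundary term sits on the left-hand side of the estimate, and obtaining it on a compact $E$ in the illuminated region, together with $H^1$ control, is exactly the difficulty that the extended-domain estimate was designed to overcome.

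The paper's mechanism circumvents both problems at once. One first establishes Theorem \ref{icar}: a Carleman estimate with the linear weight, measured in the $H^{-1}(\widetilde{\Omega})$ norm over an enlarged domain $\widetilde{\Omega}\supset\Omega$ chosen so that $E\subset\partial\widetilde{\Omega}\cap\partial\Omega$ and $\partial\widetilde{\Omega}\cap\partial\Omega$ is a compact subset of $\partial\Omega_{-,0}(\xi)$; this is the adaptation of Chung's argument to the linear weight that the paper justifies at length in Remark \ref{Chuu}, not an off-the-shelf tool. Then Hahn--Banach/Riesz duality applied to (\ref{lce}) produces, for every $v\in L^2(\Omega)$, a solution $r\in H^1_0(\widetilde{\Omega})$ of the conjugated equation (\ref{duals}) with $\left\|r\right\|_{H^1(\Omega)}\leq C\left\|v\right\|_{L^2(\Omega)}$. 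The constraint $r_1|_E=0$ is then automatic rather than imposed: $r$ has zero trace on all of $\partial\widetilde{\Omega}$ and $E$ lies on $\partial\widetilde{\Omega}\cap\partial\Omega$. The $H^1$ bound comes from the dual of $H^{-1}(\widetilde{\Omega})$ being $H^1_0(\widetilde{\Omega})$, and the boundary estimate in (iv) follows by the trace inequality and interpolation from $\left\|r_1\right\|_{L^2(\Omega)}\leq C\tau^{-1}$ and $\left\|r_1\right\|_{H^1(\Omega)}\leq C$. To repair your argument you would have to replace your boundary-term estimate by an $H^{-1}$-type estimate adapted to $E$, i.e.\ prove Theorem \ref{icar}; without it, properties (iv) and the vanishing $u_1|_E=0$ are not established.
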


\begin{rem}
It is well know that the main tool to derive the existence of CGO solutions for the equation $\mathcal{L}_{A,q}u=0$ is a suitable Carleman estimate with a limiting Carleman weight. We say that a smooth real-valued function $\varphi$ is a limiting Carleman weight, LCW for short, in an open bounded set $\Omega\subset \mathbb{R}^n$ if it has nonvanishing gradient and satisfies pointwise in $\Omega$
\begin{equation}\label{lcww}
\left \langle \varphi^{\prime\prime} \nabla \varphi, \nabla \varphi\right \rangle + \left \langle  \varphi^{\prime\prime} \xi, \xi \right \rangle=0,
\end{equation}
whenever $ \left |  \xi\right |= \left | \nabla\varphi \right |$ and $\nabla\varphi\cdot\xi=0$. It was shown  in \cite{DSFKSU} that there exist only six LCWs for open bounded sets in $\mathbb{R}^n$. In particular we distinguish two LCWs. The linear LCW $\varphi(x)=\xi\cdot x$ where $\xi\in \mathbb{R}^n\setminus \left \{ 0 \right \} $ and the logarithmic LCW $\varphi(x)=\log  \left | x-x_0 \right |$ where $x_0\in \mathbb{R}^n$. Different kinds of Carleman estimates with (and without) boundary terms were obtained by several authors, see for example \cite{BU, DSFKSjU, KSU}.
\end{rem}

To prove Theorem \ref{Zs} we have to obtain a Carleman estimate with a linear LCW  for the magnetic Schr\"odinger operator with the additional support constraint on $E$. More precisely we have the next theorem.


\begin{thm}\label{icar}
Let $\Omega\subset \mathbb{R}^n$ be a simply connected bounded domain with smooth boundary. Consider $\xi\in S^{n-1}$ and $E$ defined by (\ref{Ecompact}).  If $A_1\in C^2(\overline{\Omega}; \mathbb{R}^n)$ and $q_1\in L^\infty(\Omega)$ then given a smooth domain $\widetilde{\Omega}$(depending on $\xi$) satisfying $\Omega\subset \widetilde{\Omega}$, $E\subset \partial \widetilde{\Omega} \cap \partial\Omega$, $ \partial\widetilde{\Omega} \cap \partial\Omega$ be a compact subset of $\partial\Omega_{-,0}(\xi)$; and two positive constants $\widetilde{C}$ and $\tau_0$ (both depending on  $n, \Omega, \left \| A_1 \right \|_{C^2}, \left \| q_1 \right \|_{L^\infty}$)  such that for all $\tau\geq\tau_0$ the following estimate
\begin{equation}\label{lce}
\left \| e^{\tau\xi \cdot x} w   \right \|_{L^2(\Omega)} \leq \widetilde{C}  \left \| e^{\tau \xi\cdot x} \mathcal{L}_{A_1,q_1} w \right \|_{H^{-1}(\widetilde{\Omega})},
\end{equation}
holds true for all $w\in C^{\infty}_0(\Omega)$.
\end{thm}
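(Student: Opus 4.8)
The plan is to prove the weighted estimate (\ref{lce}) by conjugating the Laplacian with the linear weight $e^{\tau\xi\cdot x}$, establishing a Carleman estimate with a gain of one derivative for the resulting constant-coefficient operator, and then absorbing the magnetic and electric terms of $\mathcal{L}_{A_1,q_1}$ for $\tau$ large. The enlarged domain $\widetilde{\Omega}$ enters only through the negative norm on the right: since $w\in\D(\Omega)$ is supported away from $\partial\widetilde{\Omega}$, measuring $e^{\tau\xi\cdot x}\mathcal{L}_{A_1,q_1}w$ in $H^{-1}(\widetilde{\Omega})=\big(H^1_0(\widetilde{\Omega})\big)^{*}$ is precisely the dual formulation that, in the subsequent proof of Theorem \ref{Zs}, will let a Hahn--Banach/Riesz argument return solutions lying in $H^1_0(\widetilde{\Omega})$ and therefore vanishing on $\partial\widetilde{\Omega}\cap\partial\Omega\supset E$; such an admissible $\widetilde{\Omega}$ exists because $E$ is a compact subset of the open illuminated face, away from the grazing set $Z_N$. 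For (\ref{lce}) itself I would use only that $\widetilde{\Omega}$ is a smooth bounded set containing $\Omega$.

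For the principal part, write $\varphi(x)=\xi\cdot x$ and $P_\varphi:=e^{\tau\varphi}(-\Delta)e^{-\tau\varphi}=(D+i\tau\xi)^2=(-\Delta-\tau^2)+2i\tau\,\xi\cdot D$. Splitting into self-adjoint real and imaginary parts $A_\tau=-\Delta-\tau^2$ and $B_\tau=2\tau\,\xi\cdot D$, one has for $w\in\D(\Omega)$ the identity $\|P_\varphi w\|_{L^2}^2=\|A_\tau w\|_{L^2}^2+\|B_\tau w\|_{L^2}^2+\big([A_\tau,B_\tau]w,w\big)$, and since $\varphi$ is linear both operators have constant coefficients, so the commutator vanishes. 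Because $w$ is compactly supported, a Poincar\'e inequality in the $\xi$-direction gives $\|w\|_{L^2}\le\frac{C}{2\tau}\|B_\tau w\|_{L^2}$; combining this with $\|A_\tau w\|_{L^2}\le\|P_\varphi w\|_{L^2}$ yields the base estimate $\tau\|w\|_{L^2}+\|\nabla w\|_{L^2}\le C\|P_\varphi w\|_{L^2}$. Since the commutator vanishes this estimate is borderline, with no room to absorb first-order perturbations; to create that room I would convexify the weight, replacing $\varphi$ by $\varphi+\frac{1}{2\eps}(\xi\cdot x)^2$, which produces a positive commutator contribution of size $\eps^{-1}$, hence a genuine gain factor $\eps^{-1/2}$ in the base estimate, while changing the exponential weight only by the fixed, $\tau$-independent and bounded factor $e^{(\xi\cdot x)^2/(2\eps)}$ over the bounded set $\widetilde{\Omega}$.

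Next I would upgrade the base estimate to the gain-of-derivative form with the right-hand side in a negative norm. Passing to the semiclassical scaling $h=1/\tau$, the conjugated operator $h^2P_\varphi$ and the semiclassical norms $\|\cdot\|_{H^{\pm1}_{\mathrm{scl}}}$ (built from $\langle hD\rangle$) turn the base estimate into $\frac{h}{C}\|w\|_{H^1_{\mathrm{scl}}}\le\|h^2P_\varphi w\|_{H^{-1}_{\mathrm{scl}}}$; the key simplification is that, the weight being linear, $P_\varphi$ has constant coefficients and commutes with $\langle hD\rangle$, so the shift from $L^2$ to $H^{-1}_{\mathrm{scl}}$ on the right costs only a localization to $\widetilde{\Omega}$ by a cutoff adapted to $w\in\D(\Omega)\subset\subset\widetilde{\Omega}$. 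Reverting to $\tau$ and to the ordinary norm $H^{-1}(\widetilde{\Omega})$ then gives the clean principal estimate $\|w\|_{L^2(\Omega)}\le C_0\|P_\varphi w\|_{H^{-1}(\widetilde{\Omega})}$ with a $\tau$-independent constant.

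Finally I would add the potentials perturbatively. In the semiclassical scaling the conjugated full operator is $h^2\,e^{\tau\varphi}\mathcal{L}_{A_1,q_1}e^{-\tau\varphi}=h^2P_\varphi+h\big(2A_1\cdot hD+2i\,\xi\cdot A_1\big)+h^2R$, where $R$ collects $\mathrm{div}\,A_1$ and the bounded potential $A_1^2+q_1$. The first-order magnetic term is estimated by moving $hD$ onto the $H^1_{\mathrm{scl}}$-dual through integration by parts, which gives $O(h)\|w\|_{L^2}$ in $H^{-1}_{\mathrm{scl}}$, and the remainder $h^2R$ is $O(h^2)$. The main obstacle is the zeroth-order term $2ih\,\xi\cdot A_1$: it carries no derivative to integrate by parts, it admits no gain in the negative norm, and it is bounded in $H^{-1}_{\mathrm{scl}}$ only by $Ch\|w\|_{L^2}\le Ch\|w\|_{H^1_{\mathrm{scl}}}$ with $C$ controlled by $\|A_1\|_{L^\infty}$, that is, at exactly the same order $h$ as the base gain. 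This is the classical limiting-Carleman-weight difficulty in the magnetic problem, and it is what forces the borderline base estimate to be improved: it is overcome precisely by the convexification of the second paragraph, whose gain factor $\eps^{-1/2}$ (with $\eps$ fixed small, depending on $\|A_1\|_{L^\infty}$) beats the perturbation constant, so that for all $\tau\ge\tau_0$ the perturbations are absorbed into the left-hand side and (\ref{lce}) follows.
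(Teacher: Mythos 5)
Your interior machinery (conjugation by the linear weight, vanishing commutator, Poincar\'e in the $\xi$-direction, convexification, perturbative absorption of the magnetic terms) is the standard route to a Carleman estimate with a gain of one derivative \emph{over} $\mathbb{R}^n$, but it does not reach the statement, because the whole difficulty of Theorem \ref{icar} is concentrated in the norm on the right-hand side of (\ref{lce}): $H^{-1}(\widetilde{\Omega})$ is the dual of $H^1_0(\widetilde{\Omega})$, whose test functions vanish on $\partial\widetilde{\Omega}\supset E$. Since zero extension embeds $H^1_0(\widetilde{\Omega})$ isometrically into $H^1(\mathbb{R}^n)$, one has $\|f\|_{H^{-1}(\widetilde{\Omega})}\leq\|f\|_{H^{-1}(\mathbb{R}^n)}$ for $f$ supported in $\Omega$, so the desired inequality is \emph{strictly stronger} than any estimate produced by Fourier analysis on the whole space. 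Your bridge between the two --- ``the shift costs only a localization to $\widetilde{\Omega}$ by a cutoff adapted to $w\in C^\infty_0(\Omega)\subset\subset\widetilde{\Omega}$'' --- is not uniform: $\supp w$ is compact in $\Omega$ but may approach $E\subset\partial\Omega\cap\partial\widetilde{\Omega}$, the cutoff must then degenerate, and the constant $\widetilde{C}$ blows up exactly in the regime that matters. This is not a removable technicality: the extra strength of the $H^{-1}(\widetilde{\Omega})$ norm is precisely what makes the Hahn--Banach step in the proof of Theorem \ref{Zs} return a remainder $r\in H^1_0(\widetilde{\Omega})$, hence vanishing on $E$; a whole-space $H^{-1}$ estimate would only give $r\in H^1(\mathbb{R}^n)$ with no support constraint at all. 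Consistently, your argument never uses the geometric hypotheses --- that $E$ is a compact subset of $F_N\setminus Z_N$ as in (\ref{Ecompact}), that $\partial\widetilde{\Omega}\cap\partial\Omega$ is compactly contained in the illuminated face $\partial\Omega_{-,0}(\xi)$, and simple connectedness --- whereas if a purely interior argument sufficed, the same theorem would hold for an \emph{arbitrary} compact $E\subset\partial\Omega$, which is not what this result (or Chung's result it is modeled on) asserts. The paper's own proof is boundary-adapted for exactly this reason: following \cite{Ch}, one writes $E$ as a graph $x_n=f(x')$ over the illuminated face, flattens it by $\Theta(x',x_n)=(x',x_n/f(x'))$, uses a partition of unity and local changes of variables, and then runs Chung's analysis (simplified by the linear weight) in the flattened geometry, where the vanishing of the $H^1_0(\widetilde{\Omega})$ test functions on the flat piece can be exploited; alternatively the paper sketches a Wolff-type limiting argument \cite{Wo} deducing the linear-weight estimate from Chung's logarithmic one by letting $x_0\to\infty$ in the direction $\xi$.

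There is also an internal inconsistency in your interior steps. The norm shift from $L^2$ to the semiclassical $H^{-1}$ rests on $P_\varphi$ having constant coefficients and commuting with $\langle hD\rangle$, which holds only for the \emph{unconvexified} linear weight; but absorbing the zeroth-order magnetic term $2ih\,\xi\cdot A_1$ requires the $\eps^{-1/2}$ gain of the convexified weight, whose conjugated operator is no longer constant-coefficient. So at the one point where you need the negative-norm form and the convexification gain simultaneously, neither of your two arguments applies as stated. (A related defect: applying the base estimate to $\langle hD\rangle^{-1}w$ destroys compact support, and your Poincar\'e step --- equivalently, the noncharacteristic lower bound --- fails without it, since the symbol $|\eta|^2-\tau^2+2i\tau\,\xi\cdot\eta$ vanishes on a nonempty set.) These interior issues are repairable over $\mathbb{R}^n$ by the commutator arguments of \cite{KSU} and \cite{DSFKSjU}, but the repaired estimate is still a whole-space one, so the gap described above remains.
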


\begin{rem}
\label{Chuu} For Schr\"odinger operator in the absence of a magnetic potential ($A\equiv 0$) a Carleman estimate adapted to the vanishing condition on the boundary  was obtained by Kening, Sj\"ostrand and Uhlmann, see Proposition $3.2$ in \cite{KSU}. Their estimate are from $L^{2}(\Omega)$ to $L^2(\Omega)$. This is enough to obtain identifiability result because from identity (\ref{al}) it is enough to have boundedness in $L^2(\Omega)$ of the solutions of the Schr\"odinger operator $\mathcal{L}_{0,q}u=0$. In the presence of a magnetic potential  a Carleman estimate adapted to the additional vanishing condition on compact subsets of the boundary was obtained by Chung \cite{Ch} for a logarithmic LCW as follows.
\begin{thm}
Let $\Omega$ be a simply connected bounded domain with smooth boundary. Consider $x_0\in\mathbb{R}^n$ which is not in the closure of the convex hull of $\Omega$.  If $A\in C^2(\overline{\Omega}; \mathbb{R}^n)$ and $q\in L^\infty(\Omega)$ then given a smooth domain $\Omega^\prime$ (depending on $x_0$) satisfying $\Omega\subset \Omega^\prime$ and $\partial\Omega\cap\partial\Omega^\prime$ be a compact subset of $\partial\Omega_{-,0}(x_0)$; there exist two positive constants $C$ and $\tau_0$ (both depending on  $n, \Omega, \left \| A \right \|_{C^2}, \left \| q \right \|_{L^\infty}$)  such that for all $\tau\geq\tau_0$ the following estimate
\[
\left \|  \left | x-x_0 \right |^\tau w \right \|_{L^2(\Omega)} \leq C \left \| \left | x-x_0 \right |^\tau \mathcal{L}_{A,q}w \right \|_{H^{-1}(\Omega^\prime)},
\]
holds true for all $w\in C^{\infty}_0(\Omega)$.
\end{thm}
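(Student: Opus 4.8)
The plan is to read this off from the interior magnetic Carleman estimate for the logarithmic limiting weight, transplanted onto the enlarged domain $\Omega^\prime$. Write $\varphi(x)=\log|x-x_0|$ and use $|x-x_0|^\tau=e^{\tau\varphi}$, so the assertion becomes $\|e^{\tau\varphi}w\|_{L^2(\Omega)}\leq C\|e^{\tau\varphi}\mathcal{L}_{A,q}w\|_{H^{-1}(\Omega^\prime)}$ for $w\in C^\infty_0(\Omega)$ and $\tau\geq\tau_0$. Because $x_0$ lies outside the closed convex hull of $\Omega$, one can take the enlargement so that $x_0\notin\overline{\Omega^\prime}$; then $\nabla\varphi=(x-x_0)/|x-x_0|^2$ is nonvanishing on $\overline{\Omega^\prime}$ and $\varphi$ is a genuine limiting Carleman weight there in the sense of (\ref{lcww}). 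The geometric constraint that $\partial\Omega\cap\partial\Omega^\prime$ be a compact subset of the illuminated face $\partial\Omega_{-,0}(x_0)$ only fixes how $\Omega^\prime$ is attached — it is what makes the estimate usable downstream to build solutions vanishing on $E$ — and plays no role in the estimate itself: a function $w\in C^\infty_0(\Omega)$, extended by zero, lies in $C^\infty_0(\Omega^\prime)$, so on $\Omega^\prime$ it is a compactly supported interior datum and no boundary terms occur. After extending $A$ to $C^2(\overline{\Omega^\prime})$ and $q$ to $L^\infty(\Omega^\prime)$ with controlled norms, $e^{\tau\varphi}\mathcal{L}_{A,q}w$ is supported in $\overline\Omega$ and its $H^{-1}(\Omega^\prime)$ norm is meaningful.

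The analytic heart is therefore the interior magnetic Carleman estimate on $\Omega^\prime$, which I would build following Dos Santos Ferreira, Kenig, Sj\"ostrand and Uhlmann \cite{DSFKSjU}. Set $h=1/\tau$ and conjugate: $e^{\varphi/h}(hD)^2e^{-\varphi/h}=(hD+i\nabla\varphi)^2$, whose self-adjoint and skew-adjoint parts interact through a commutator that the weight identity (\ref{lcww}) renders nonnegative on the characteristic set. Since the logarithmic weight is only a \emph{limiting} (borderline) weight the bare estimate degenerates, so I would convexify, replacing $\varphi$ by $\varphi+\tfrac{h}{2\epsilon}\varphi^2$, to recover a subelliptic gain. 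This yields a base Carleman estimate for the Laplacian controlling $\|v\|_{L^2}$ (indeed the full semiclassical $H^1$ norm) from the source, in the derivative-gaining form
\[
\tfrac{1}{C}\,\|v\|_{L^2(\Omega^\prime)}\leq\|e^{\varphi/h}h^2(-\Delta)e^{-\varphi/h}v\|_{H^{-1}_{\mathrm{scl}}(\Omega^\prime)},\qquad v\in C^\infty_0(\Omega^\prime),
\]
where the semiclassical negative-order norm is what makes room for first-order terms.

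The first-order magnetic term is the crux. Writing $h^2\mathcal{L}_{A,q}=(hD+i\nabla\varphi+hA)^2+h^2q$ after conjugation, the magnetic contribution enters as an $O(h)$ first-order perturbation of the principal part — of the \emph{same order} as the gain — so it cannot be absorbed by merely taking $\tau$ large unless $A$ is small. This is precisely the smallness obstruction first met by Sun \cite{Sun}; removing it for a non-small $C^2$ magnetic potential is the delicate step, and it is here that the $H^{-1}$ formulation is indispensable. Following \cite{DSFKSjU} (see also \cite{NSU,KU}) one uses the $H^{-1}_{\mathrm{scl}}$ duality together with the structure of the conjugated magnetic operator so that the first-order term is controlled by the gained norm rather than competing with it, the $C^2$ regularity of $A$ entering through the commutator bounds on multiplication in the negative-order space; the zeroth-order terms $A^2+q$ are then harmless. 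Combining, one obtains $\tfrac{1}{C}\|v\|_{L^2(\Omega^\prime)}\leq\|e^{\varphi/h}h^2\mathcal{L}_{A,q}e^{-\varphi/h}v\|_{H^{-1}_{\mathrm{scl}}(\Omega^\prime)}$ with $\tau$-independent constant. Unwinding $v=e^{\tau\varphi}w$ and converting from the semiclassical to the standard negative norm gives $\|e^{\tau\varphi}w\|_{L^2(\Omega^\prime)}\leq C\|e^{\tau\varphi}\mathcal{L}_{A,q}w\|_{H^{-1}(\Omega^\prime)}$; since $w$ is supported in $\Omega$ the left-hand norm equals $\|e^{\tau\varphi}w\|_{L^2(\Omega)}$, and rewriting $e^{\tau\varphi}=|x-x_0|^\tau$ finishes the proof. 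The simple connectedness of $\Omega$ enters, as in \cite{Ch}, in the global construction used to handle the magnetic term.

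The main obstacle is exactly this magnetic first-order term together with the borderline nature of the logarithmic weight. The weight forces the convexification and limits the available gain to order $h$, while the magnetic perturbation is itself order $h$; making the two coexist — so that a $C^2$, non-small potential $A$ is absorbed rather than constrained to be small — is the technical core, and it is the derivative-gaining $H^{-1}$ norm on the right-hand side, not an $L^2\to L^2$ estimate, that makes this possible.
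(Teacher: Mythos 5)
There is a genuine gap, and it sits exactly where you wave it away: your claim that the support/illumination constraint on $\partial\Omega\cap\partial\Omega'$ ``plays no role in the estimate itself'' because $w\in C^\infty_0(\Omega)$ extends by zero to a compactly supported interior datum on $\Omega'$. The constant $C$ must be uniform over all $w\in C^\infty_0(\Omega)$, and $\supp w$ may approach the shared boundary piece $\partial\Omega\cap\partial\Omega'\supset E$. The right-hand side carries the \emph{local} norm $H^{-1}(\Omega')$, dual to $H^1_0(\Omega')$, and for data concentrating near $\partial\Omega'$ this norm is strictly and non-uniformly smaller than the global norm $H^{-1}(\mathbb{R}^n)$ (or $H^{-1}_{\mathrm{scl}}$) that the interior machinery you invoke---conjugation, convexification of the logarithmic weight, $\langle hD\rangle^{-1}$-duality and pseudodifferential absorption as in \cite{DSFKSjU, KU}---actually delivers. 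Concretely, if $f_\delta=\delta^{-1}\chi_{\{0<\mathrm{dist}(x,E)<\delta\}}$ localized near a point of $E$, then Hardy's inequality gives $\left\|f_\delta\right\|_{H^{-1}(\Omega')}\lesssim \delta^{1/2}$ while $\left\|f_\delta\right\|_{H^{-1}(\mathbb{R}^n)}\gtrsim 1$, so the two norms degenerate relative to each other at rate $\delta^{-1/2}$; an estimate with $H^{-1}(\mathbb{R}^n)$ on the right does not imply the stated one. Equivalently, by Hahn--Banach duality---which is precisely how the paper uses the estimate, see (\ref{duals}) and Theorem \ref{Zs}---the stated inequality encodes solvability of the conjugated adjoint equation by some $r\in H^1_0(\Omega')$, hence with trace vanishing on $E$, with $\tau$-uniform $H^1$ bounds. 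That is a genuine boundary condition which no interior estimate can produce, and it is exactly why $E$ must lie in the illuminated face where $\partial_\nu\varphi<0$: compare the asymmetric roles of the two boundary terms in the Carleman estimate (\ref{Ce}). If the attachment geometry were irrelevant, one could manufacture CGO remainders vanishing on arbitrary closed subsets of $\partial\Omega$, including shadowed ones, which the sign structure of (\ref{Ce}) forbids.

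Correspondingly, the paper's proof (following Chung \cite{Ch}, as sketched in Remark \ref{Chuu}) is not an interior argument at all: after placing $\Omega$ in a half-space with $x_0=0$ outside a ball, one passes to the coordinates $r=\log\left|x-x_0\right|$, $\theta=(x-x_0)/\left|x-x_0\right|$, writes $E$ as a graph $r=f(\theta)$, rescales by $\widetilde{\Theta}(r,\theta)=(r/f(\theta),\theta)$ so that $E$ becomes flat in the $r$-variable, and via a partition of unity proves the estimate, with the local $H^{-1}(\Omega')$ norm, \emph{up to} the flattened boundary piece; the paper explicitly identifies this flattening as ``the main point'' of the proof. Your interior toolkit (convexified limiting weight, semiclassical $H^{-1}$ gain to handle the first-order magnetic term) matches the interior half of that scheme, and your motivation for the $H^{-1}$ formulation agrees with the paper's explanation of why the $L^2\to L^2$ estimate of \cite{KSU} no longer suffices once $A\not\equiv 0$; but the boundary analysis near $E$, which carries the theorem's actual content, is missing. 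A minor inaccuracy besides: absorbing the first-order term in the $L^2$ Carleman estimate never required smallness of $A$ (one takes the convexification parameter small depending on $\left\|A\right\|_{L^\infty}$); the smallness hypothesis in \cite{Sun} arose in the CGO/uniqueness argument, not in the Carleman estimate.
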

Roughly speaking we explain Chung's arguments to prove the above estimate. Let $\Omega$ be an open simply-connected set with smooth boundary. Consider $x_0\in \mathbb{R}^n$ which is not in the closure of the convex hull of $\Omega$.  Without loss of generality we can assume that $x_0=0$. Then we can separate the point $x_0$ and $\Omega$ by a plane. Actually he assumed that $\Omega$ lies entirely in the half-space $E_n:=\left \{(x_1, \ldots, x_n): x_n>0  \right \}$ and that there exists $r_0>0$ such that $B_{r_0}(0)\cap \Omega = \emptyset$. In this setting he introduced a change of variables in $(\mathbb{R}^n\setminus B_{r_0}(0)) \cap E_n$ given by 
\[
\Theta: (\mathbb{R}^n\setminus B_{r_0}(0)) \cap E_n \rightarrow \mathbb{R}^+\times S^{n-1}, \quad \Theta(x)= (r, \theta),
\]
where $r=\log \left |x-x_0\right | $ and $\theta= (x-x_0)/ \left | x-x_0 \right |$. As a first approximation, from the illuminated condition of $\partial\Omega_{-,0}(x_0)$, the set $E$ can be defined by a graph $r=f(\theta)$, where $f:S^{n-1}\rightarrow \left(r_0, \infty  \right)$ is smooth enough. Then by making another change of variables $\widetilde{\Theta}: \mathbb{R}^+\times S^{n-1} \rightarrow S^1\times S^{n-1}$ defined by $\widetilde{\Theta}(r, \theta)=(r/f(\theta),\theta)$, we can see $E$ as a part of the unit sphere $S^{n-1}$ and then by a partition of unity and local changes of variables he obtained flatness on $E$ on the $r$-variable. This is the main point to derive his Carleman estimate.
In our case to prove Theorem \ref{icar} we can follow similar arguments described above. Without loss of generality we can assume that $\xi=e_n$, where $e_n$ is the $n$-th canonical unit vector in $\mathbb{R}^n$. We assume that $\Omega$ lies entirely in the half-space $E_{r_0}:=\left \{(x_1, \ldots, x_n): x_n>r_0  \right \}$ for some positive constant $r_0$. Now from the illuminated condition of $\partial\Omega_{-,0}(\xi)$, the set $E$ can be defined by a graph $x_n=f(x^\prime)$, where $x=(x^\prime, x_n)\in \mathbb{R}^n$ and $x^\prime=(x_1, \ldots, x_{n-1})$. Notice in particular that $f(x^\prime)>r_0$. Next we introduce the change of variables $\Theta: E_{r_0} \rightarrow  \left \{(x_1, \ldots, x_n)\in \mathbb{R}^n: x_n=1  \right \}$ defined by $\Theta(x^\prime, x_n)= (x^\prime, x_n/f(x^\prime))$. Thus we can see $E$ as a subset of the hyperplane  $ \left \{(x_1, \ldots, x_n)\in \mathbb{R}^n: x_n=1  \right \}$ and then by a partition of unity and local changes of variables we shall obtain flatness on $E$ on the $x_n$-variable. From here we can use analogous arguments to the ones used by Chung with the difference that our computations are easier than in his case because we have a linear LCW $\varphi(x)= x\cdot e_n=x_n$. The repetition of his arguments would be large and tedious and we hope that the reader would be convinced by our heuristic justification. 
\end{rem}
We mention that in the global case, that say $\Omega=\widetilde{\Omega}=\mathbb{R}^n$,  there exists an argument which allows to obtain a Carleman estimate with linear weight from another with a logarithmic weight. This argument is due to Wolff \cite{Wo} and was used by him in the context of unique continuation for second order elliptic operators to deduce the Kenig, Ruiz and Sogge estimate \cite{KRS} from the Jerison-Kenig estimate \cite{JK}. We can use as well this kind of argument to obtain the Carleman estimate (\ref{icar}) from the Carleman estimate with logarithmic weight obtained by Chung \cite{Ch}. The point is to track back all the constants which appearing in Chung's estimates and see the $\xi$-illuminated face  $\partial\Omega_{-,0}(\xi)$ as a limit of illuminated sets from a point  $\partial\Omega_{-,0}(x_0)$ when $x_0$ goes to the infinity in the direction $\xi$. 
\begin{rem}[About Theorem \ref{Zs}] The proof of Theorem \ref{Zs} follows by combining standard arguments. As first step we use estimate (\ref{lce}) and Hahn-Banach theorem to obtain the following result. For every $v\in L^{2}(\Omega)$, there exists $r\in H^{1}_0(\widetilde{\Omega})$ satisfying in $\Omega$
\begin{equation}\label{duals}
\left( e^{-\tau x\cdot \xi} \tau^{-2} \mathcal{L}_{A_1,q_1} e^{\tau x\cdot \xi}  \right)r=v.
\end{equation}
Hence its trace vanishes on $E\subset \partial\widetilde{\Omega}$. Moreover, such solution satisfies
\[
\left \| r \right \|_{H^1(\Omega)}\leq C\left \| v \right \|_{L^2(\Omega)},
\]
where $C$ is a positive constants depending on $n, \Omega, \left \| A_1 \right \|_{C^2}$ and $ \left \| q_1 \right \|_{L^\infty}$. Remember that we are looking for solutions of the magnetic Schr\"odinger operator $\mathcal{L}_{A_1,q_1}u_1=0$ of the form
\[
u_1= e^{\tau(\xi\cdot x + i\zeta\cdot x)}\left( e^{\Phi_1} + r_1  \right)-e^{\tau l}b.
\]
The function $\Phi_1$ satisfies equation (\ref{Phiuno}) which can be solved by taking into account the operator $(\xi+i\zeta)\cdot \nabla$ as a $\overline{\partial}$-operator in a suitable variables. The existence of the function $r_1$ with the corresponding estimates is given by (\ref{duals}) for some suitable $v\in L^{2}(\Omega)$. Finally the existence of the functions $l$ and $b$ can be followed by similar arguments from Proposition $9.2$ in \cite{Ch}.
\end{rem}

We will also use the solutions constructed by Dos Santos Ferreira, Kenig, Sj\"ostrand and Uhlmann, see Lemma $3.4$ in \cite{DSFKSjU}. These solutions do not require the support constraint on $E$.

\begin{thm}\label{DSFKSjUs}
Let $\Omega\subset \mathbb{R}^n$ be a bounded open set with smooth boundary. Let $\xi, \zeta  \in S^{n-1}$ be a pair of orthonormal vectors. If $A_2\in C^2(\overline{\Omega}; \mathbb{R}^n)$ and $q_2\in L^\infty(\Omega)$ then there exist two positive constants $\tau_0$ and $C$ (both depending on $n, \Omega,\left \| A_2 \right \|_{C^2}, \left \| q_2 \right \|_{L^\infty} $) such that the equation  $\mathcal{L}_{A_2,\overline{q}_2}u=0$ has a solution $u_2\in H^1(\Omega)$ of the form
\[
u_2= e^{-\tau(\xi\cdot x -i \zeta\cdot x)}\left( e^{\Phi_2}g + r_2  \right),
\]
with the following properties: 
\begin{item}
\item[(i)] The function  $\Phi_2$ satisfies in $\Omega$
\begin{equation}\label{Phidos}
(\xi+i\zeta)\cdot \nabla \overline{\Phi}_2-i (\xi+i\zeta)\cdot A_2 =0.
\end{equation}
and
\begin{equation}
\left \| \Phi_2  \right \|_{W^{\alpha, \infty}}\leq C \left \| A_2 \right \|_{C^{\alpha}(\Omega)} \; , \; \left | \alpha \right |\leq 1.
\end{equation}
\item[(ii)]  The function $g$ is smooth and satisfies in $\Omega$
\begin{equation}\label{guno1}
(\xi+i\zeta)\cdot \nabla g=0.
\end{equation}
\item[(iii)] The  function $r_2$ belongs to $H^1(\Omega)$ and satisfies the following estimate
\[
 \left \|\partial^{\alpha} r_2  \right \|_{L^2(\Omega)} \leq C \tau ^{ \left | \alpha \right |-1} \left \|  g\right \|_{H^2(\Omega)},\quad \left |\alpha  \right |	\leq 1,
\]
for all $\tau\geq \tau_0$.
\end{item}
\end{thm}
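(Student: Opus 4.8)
The plan is to construct $u_2$ by the standard complex geometrical optics scheme: conjugate $\mathcal{L}_{A_2,\overline q_2}$ by the complex linear weight, remove the leading $O(\tau)$ term with a transport amplitude, and then solve for a remainder using a gain-of-$\tau^{-1}$ solvability estimate. Since no support constraint on $E$ is imposed here, the global (Faddeev / Sylvester--Uhlmann type) estimate, equivalently the linear-weight Carleman estimate \emph{without} boundary terms, is enough; thus the statement is essentially Lemma $3.4$ of \cite{DSFKSjU}, and I would reproduce its proof. First I would conjugate. Writing $u_2 = e^{-\tau(\xi - i\zeta)\cdot x}\,w$ and using that orthonormality of $\xi,\zeta$ gives the null condition $(\xi - i\zeta)\cdot(\xi - i\zeta) = |\xi|^2 - |\zeta|^2 - 2i\,\xi\cdot\zeta = 0$, a direct computation yields
\[
e^{\tau(\xi - i\zeta)\cdot x}\,\mathcal{L}_{A_2,\overline q_2}\,e^{-\tau(\xi - i\zeta)\cdot x} = D^2 + 2\tau\bigl[(\xi - i\zeta)\cdot\nabla + i(\xi - i\zeta)\cdot A_2\bigr] + R,
\]
where $R = A_2\cdot D + D\cdot A_2 + A_2^2 + \overline q_2$ collects the $\tau$-independent terms. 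The operator $(\xi - i\zeta)\cdot\nabla$ is, up to a constant, a Cauchy--Riemann operator in the two variables spanned by $\xi,\zeta$, with the remaining $n-2$ variables entering as parameters.

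Next I would cancel the $O(\tau)$ term by means of the amplitude $e^{\Phi_2}g$. Choosing $\Phi_2$ to solve the inhomogeneous transport equation (\ref{Phidos}) and $g$ the homogeneous one (\ref{guno1}) makes the first-order-in-$\tau$ contribution of the conjugated operator annihilate $e^{\Phi_2}g$. Both equations are solved by the solid Cauchy transform in the $(\xi,\zeta)$-plane; the hypothesis $A_2\in C^2$ supplies enough regularity to produce $\Phi_2$ satisfying the asserted $W^{\alpha,\infty}$ bound for $|\alpha|\le 1$, while $g$ may be taken to be a fixed smooth holomorphic amplitude. With $w = e^{\Phi_2}g + r_2$, the equation $\mathcal{L}_{A_2,\overline q_2}u_2 = 0$ becomes, after conjugation, $\bigl(D^2 + 2\tau[\,(\xi-i\zeta)\cdot\nabla + i(\xi-i\zeta)\cdot A_2\,] + R\bigr)r_2 = -\bigl(D^2 + R\bigr)(e^{\Phi_2}g) =: f_\tau$, where the $O(\tau)$ part has vanished by construction, so that $\|f_\tau\|_{L^2(\Omega)}\le C\|g\|_{H^2(\Omega)}$ uniformly in $\tau$, with $C$ depending on $\|A_2\|_{C^2}$ and $\|q_2\|_{L^\infty}$.

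Finally I would solve for $r_2$. The essential input is the solvability estimate for the free conjugated Laplacian $D^2 + 2\tau(\xi - i\zeta)\cdot\nabla$: for every $f\in L^2(\Omega)$ there exists a solution $v$ of the corresponding equation satisfying $\|\partial^\alpha v\|_{L^2(\Omega)}\le C\tau^{|\alpha|-1}\|f\|_{L^2(\Omega)}$ for $|\alpha|\le 1$. The first-order magnetic part of $R$ is of lower order relative to the $2\tau$ term, so for $\tau\ge\tau_0$ large it can be absorbed by a Neumann series or contraction argument, producing $r_2\in H^1(\Omega)$ with $\|\partial^\alpha r_2\|_{L^2(\Omega)}\le C\tau^{|\alpha|-1}\|g\|_{H^2(\Omega)}$, which is exactly property (iii). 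The choice of $\overline q_2$ instead of $q_2$ is made so that $u_2$ solves the formal adjoint equation, which is what is later paired against the solution $u_1$ of Theorem \ref{Zs} in the integral identity.

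I expect the technical heart to be the remainder step, namely establishing the gain-of-$\tau^{-1}$ solvability estimate for the conjugated operator together with the uniform-in-$\tau$ absorption of the first-order perturbation. This is precisely where the null condition $(\xi - i\zeta)\cdot(\xi - i\zeta)=0$ and the orthonormality of $\xi$ and $\zeta$ are exploited, and it constitutes the main obstacle.
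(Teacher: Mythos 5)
The paper offers no proof of Theorem \ref{DSFKSjUs} at all: it is quoted verbatim as Lemma $3.4$ of \cite{DSFKSjU}, so your plan of reproducing that construction is the natural one, and your first two steps are a faithful account of it: the conjugation exploiting the null condition $(\xi-i\zeta)\cdot(\xi-i\zeta)=0$, and the cancellation of the $O(\tau)$ term by an amplitude $e^{\Phi_2}g$ built from $\overline{\partial}$-type transport equations. (One small remark: with the weight $e^{-\tau(\xi-i\zeta)\cdot x}$ the transport operator is $(\xi-i\zeta)\cdot\nabla$, so the cancellation actually requires $(\xi-i\zeta)\cdot\nabla g=0$ rather than (\ref{guno1}) as literally stated; this inconsistency is already present in the paper's statement and is harmless, since in the application $g$ is taken to depend only on the variables orthogonal to both $\xi$ and $\zeta$, in which case both equations hold.)

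The genuine gap is in your final step, which you yourself identify as the technical heart but then dispose of incorrectly. If you only have the solvability estimate for the free operator $D^2+2\tau(\xi-i\zeta)\cdot\nabla$, with gain exactly $\tau^{-1}$, the perturbations you must absorb are the $\tau$-sized zeroth-order term $2i\tau(\xi-i\zeta)\cdot A_2$ and the first-order term $A_2\cdot D+D\cdot A_2$. Applying the free solution operator $G_\tau$ (with $\left\|G_\tau\right\|_{L^2\to L^2}\leq C\tau^{-1}$ and $\left\|\nabla G_\tau\right\|_{L^2\to L^2}\leq C$) to either of them yields an operator of norm $O(\left\|A_2\right\|_{L^\infty})$, \emph{uniformly in} $\tau$: the explicit factor $\tau$ in the first term, and the derivative in the second, exactly eat the gain. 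Hence the Neumann series does not become contractive by taking $\tau\geq\tau_0$ large; it converges only if $\left\|A_2\right\|_{L^\infty}$ is small, which is precisely the smallness hypothesis of Sun \cite{Sun} that \cite{NSU} and \cite{DSFKSjU} were designed to remove. The actual proof in \cite{DSFKSjU} circumvents this by proving the Carleman estimate directly for the full magnetic operator $\mathcal{L}_{A,q}$ using a \emph{convexified} limiting Carleman weight: the convexification parameter $\epsilon$ produces an additional gain of order $\epsilon^{-1/2}$ on the left-hand side, and since $\epsilon$ is chosen small in terms of the a priori bound on $A_2$ (not in terms of $\tau$), the magnetic terms can be absorbed whatever their size; the remainder $r_2$ is then obtained by a Hahn--Banach duality argument applied to the full conjugated operator, with $\tau_0$ depending on $\epsilon$. (Alternatively, the first-order term can be removed beforehand by pseudodifferential conjugation, as in \cite{NSU} and \cite{SaM}.) As written, your absorption argument proves the theorem only for small magnetic potentials.
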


\begin{rem}\label{minuszeta}
We mention that Theorem \ref{DSFKSjUs} was stated for any LCW. From condition (\ref{lcww}), if $\varphi$ is a LCW then $-\varphi$ is also a LCW. As a consequence Theorems \ref{Zs} and \ref{DSFKSjUs} remain true replacing $\xi\cdot x$ by $-\xi\cdot x$; and we have analogous estimates for the respective solutions $u_1$ and $u_2$. 
\end{rem}

\begin{rem}\label{infi}
The following result was proved in \cite{SaM} (see Lemma $4.6$ in \cite{SaM} and also Lema $2.1$ in \cite{Sun}). Let $\xi_0\in \mathbb{C}^n$ such that $\Re \xi_0\cdot \Im \xi_0=0$ and $\left | \Re \xi_0 \right | = \left |\Im \xi_0  \right |=1$. If $W\in L^{\infty}(\mathbb{R}^n)$ then there exists a solution  $\Phi\in  L^{\infty}(\mathbb{R}^n)$ of the equation  
\[
\xi_0\cdot \nabla \Phi + i\xi_0 \cdot W =0.
\]
Moreover, there exists $C>0$ such that  
\begin{equation}\label{infinito}
\left \| \Phi \right \|_{L^\infty(\mathbb{R}^n)} \leq C \left \| W \right \|_{L^\infty(\mathbb{R}^n)}.
\end{equation}
\end{rem}

The following proposition concerns Carleman estimates for the magnetic Schr\"odinger operator and was proved by Dos Santos Ferreira, Kenig, Sj\"ostrand and Uhlmann, see Proposition $2$ in  \cite{DSFKSjU}.


\begin{prop}[A Carleman estimate with boundary terms]  \label{PCe}
Let $\Omega\subset\mathbb{R}^n$ be a bounded open set with smooth boundary. Let $\xi \in S^{n-1}$ and define $\varphi(x)=\xi\cdot x$.  If $A\in C^1(\overline{\Omega}; \mathbb{R}^n)$ and $q\in L^\infty(\Omega; \mathbb{R})$ then there exist two positive constants $\tau_0>0$ and $C>0$ (both depending on $n, \Omega,\left \| A \right \|_{C^1}, \left \| q \right \|_{L^\infty} $) such that  for all $u\in C^{\infty}(\overline{\Omega})\cap H^1_0(\Omega) $ the following estimate holds true  for all $\tau \geq \tau_0$
\begin{equation}\label{Ce}
\begin{aligned}
&\left \| \sqrt{\partial_\nu \varphi } \: e^{\tau\varphi} \partial_\nu u \right \|_{L^2(\Omega_{+,0}(\xi))} +\tau^{1/2} \left \| e^{\tau\varphi} u  \right \|_{L^2(\Omega)} + \tau^{-1/2} \left \| e^{\tau\varphi} \nabla u  \right \|_{L^2(\Omega)}\\
&\qquad \leq C\left(   \tau^{-1/2} \left \| e^{\tau\varphi} \mathcal{L}_{A,q} u \right \|_{L^2(\Omega)} + \left \| \sqrt{-\partial_\nu \varphi } \:e^{\tau\varphi} \partial_\nu u \right \|_{L^2(\Omega_{-,0}(\xi))}  \right),
\end{aligned}
\end{equation}
where $\nu$ denotes the exterior unit normal of $\partial\Omega$ and $\partial_\nu= \nu\cdot \nabla$. 
\end{prop}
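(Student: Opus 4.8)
The plan is to prove \eqref{Ce} by the classical conjugation-and-integration-by-parts scheme, treating $D^2=-\Delta$ as the principal part and the magnetic and electric terms as lower-order contributions that get absorbed once $\tau$ is large. First I would set $v=e^{\tau\varphi}u$; since $u\in C^{\infty}(\overline{\Omega})\cap H^1_0(\Omega)$, the conjugated function $v$ still has vanishing Dirichlet trace on $\partial\Omega$. Writing $\mathcal{L}_{A,q}=-\Delta+2A\cdot D-i(\nabla\cdot A)+A^2+q$ and using $\varphi(x)=\xi\cdot x$, a direct computation gives for the principal part
\[
L_\varphi v:=e^{\tau\varphi}(-\Delta)\bigl(e^{-\tau\varphi}v\bigr)=(-\Delta-\tau^2)v+2\tau\,\xi\cdot\nabla v,
\]
so that $e^{\tau\varphi}\mathcal{L}_{A,q}e^{-\tau\varphi}=L_\varphi+R_\varphi$, where $R_\varphi$ is a first-order operator whose coefficients are bounded in terms of $\|A\|_{C^1}$ and $\|q\|_{L^\infty}$. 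The estimate \eqref{Ce} is then equivalent to a weighted bound for $L_\varphi+R_\varphi$ acting on $v$.

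Next I would split $L_\varphi=S+T$ into its formally self-adjoint part $S=-\Delta-\tau^2$ and its formally skew-adjoint part $T=2\tau\,\xi\cdot\nabla$, and expand
\[
\|L_\varphi v\|_{L^2(\Omega)}^2=\|Sv\|_{L^2(\Omega)}^2+\|Tv\|_{L^2(\Omega)}^2+2\re\langle Sv,Tv\rangle_{L^2(\Omega)}.
\]
Integrating by parts in the cross term and using that $v$ vanishes on $\partial\Omega$ (so that $\nabla v=(\partial_\nu v)\,\nu$ there), the term $2\re\langle Sv,Tv\rangle$ collapses to a pure boundary integral proportional to $\tau\int_{\partial\Omega}(\partial_\nu\varphi)\,|\partial_\nu v|^2\,dS$. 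Splitting $\partial\Omega$ according to the sign of $\partial_\nu\varphi=\langle\xi,\nu\rangle$ produces exactly the two boundary contributions over $\partial\Omega_{+,0}(\xi)$ and $\partial\Omega_{-,0}(\xi)$ appearing in \eqref{Ce}, one of which carries the favourable sign and is kept on the left, the other being moved to the right-hand side.

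The heart of the argument, and the step I expect to be the main obstacle, is the interior positivity: I must bound $\tau^{3/2}\|v\|_{L^2(\Omega)}+\tau^{1/2}\|\nabla v\|_{L^2(\Omega)}$ (which, after undoing the conjugation, dominates the left-hand side of \eqref{Ce}) by $\|L_\varphi v\|_{L^2(\Omega)}$ together with the boundary terms. Because $\varphi$ is a \emph{limiting} Carleman weight in the sense of \eqref{lcww}, the leading commutator of $S$ and $T$ vanishes identically; equivalently, the semiclassical symbol $|\zeta|^2-1+2i\langle\xi,\zeta\rangle$ has a nonempty characteristic set $\{|\zeta|=1,\ \langle\xi,\zeta\rangle=0\}$ on which its Poisson bracket is zero, so that $\|Sv\|^2+\|Tv\|^2$ alone yields no positive bulk contribution. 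I would restore positivity by the standard convexification, replacing $\varphi$ by $\varphi_\eps=\varphi+\tfrac{\eps}{2}\varphi^2$, which turns the borderline bracket into a strictly positive bulk term of size comparable to $\eps$; choosing $\eps$ small but fixed and enlarging $\tau_0$ then absorbs the $\eps$-dependent errors, while $\partial_\nu\varphi_\eps$ keeps the sign of $\partial_\nu\varphi$ so that the face decomposition above is preserved. With this interior estimate in hand, the perturbation $R_\varphi$ is genuinely lower order: its contribution is controlled by $C(\|A\|_{C^1},\|q\|_{L^\infty})\,(\tau\|v\|_{L^2(\Omega)}+\|\nabla v\|_{L^2(\Omega)})$, which for $\tau\geq\tau_0$ is absorbed into the gained terms $\tau^{3/2}\|v\|_{L^2(\Omega)}+\tau^{1/2}\|\nabla v\|_{L^2(\Omega)}$. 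Undoing the substitution $v=e^{\tau\varphi}u$, discarding the now-superfluous higher powers of $\tau$, and retaining the boundary integrals from the cross term on their respective faces yields \eqref{Ce}.
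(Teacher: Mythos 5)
The paper itself does not prove Proposition \ref{PCe}: it is quoted from \cite{DSFKSjU} (Proposition 2 there), so your attempt has to be measured against the proof in that reference. Your outline has exactly the same architecture: conjugate with the weight, split the conjugated Laplacian into its self-adjoint part $S$ and skew-adjoint part $T$, observe that for the linear weight the cross term $2\re\langle Sv,Tv\rangle$ reduces, for $v\in C^\infty(\overline{\Omega})\cap H^1_0(\Omega)$, to the boundary integral $-2\tau\int_{\partial\Omega}(\partial_\nu\varphi)|\partial_\nu v|^2\,dS$, split it according to the sign of $\partial_\nu\varphi$, convexify the weight to create interior positivity, and absorb the magnetic and electric terms. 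All of the algebra you actually display (the formula for $L_\varphi$, the reduction of $\mathcal{L}_{A,q}$, the collapse of the cross term to the two faces) is correct.

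The gap is in the convexification, which you rightly call the heart of the argument: your scaling is wrong, and with it the proof does not close. You take $\varphi_\eps=\varphi+\frac{\eps}{2}\varphi^2$ with $\eps$ small but \emph{fixed}. Then $e^{\tau\varphi_\eps}=e^{\tau\varphi}\,e^{\eps\tau\varphi^2/2}$, and the ratio of the two weights oscillates over $\overline{\Omega}$ by a factor of order $e^{c\eps\tau}$; it is not bounded uniformly in $\tau$. Hence the Carleman estimate you would prove for $\varphi_\eps$ cannot be converted into \eqref{Ce}, which is stated for the linear weight: exchanging $e^{\tau\varphi_\eps}$ for $e^{\tau\varphi}$ on the two sides of your inequality costs $e^{c\eps\tau}$, which swamps every power of $\tau$. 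Moreover, the intermediate inequality you aim for, $\tau^{3/2}\|v\|_{L^2(\Omega)}\lesssim\|L_\varphi v\|_{L^2(\Omega)}+\mbox{boundary terms}$, is simply false for the linear weight itself: take $v=e^{i\tau\zeta\cdot x}\chi$ with $\chi\in C^{\infty}_0(\Omega)$, $\zeta\perp\xi$, $|\zeta|=1$; then the boundary terms vanish, $\|v\|_{L^2}\sim 1$, but $L_\varphi v=e^{i\tau\zeta\cdot x}\bigl(2\tau(\xi\cdot\nabla-i\zeta\cdot\nabla)\chi-\Delta\chi\bigr)$ has norm $O(\tau)$, so only the gain $\tau\|v\|$ is available with a linear weight. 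The correct convexification, used in \cite{KSU} and \cite{DSFKSjU}, carries a factor $\tau^{-1}$ in the quadratic term, $\varphi_\eps=\varphi+\frac{\varphi^2}{2\eps\tau}$, so that $\tau\varphi_\eps=\tau\varphi+\frac{\varphi^2}{2\eps}$ and the two weights are comparable uniformly in $\tau$, with constants depending only on $\eps$ and $\Omega$. This also changes the absorption mechanism, which you misattribute to ``enlarging $\tau_0$'': with the correct scaling the interior gain is $\frac{\tau}{\sqrt{\eps}}\|v\|$, which has the \emph{same} power of $\tau$ as the term $\tau\|A\|_{L^\infty}\|v\|$ produced by conjugating $A\cdot D+D\cdot A$, so the magnetic part is absorbed by choosing $\eps$ small compared with the a priori bound on $A$, not by taking $\tau$ large. (For the same reason the cheaper route that works for $A\equiv 0$ with linear weights, namely $\|Tv\|=2\tau\|\xi\cdot\nabla v\|\geq c_\Omega\,\tau\|v\|$ by the Poincar\'e inequality for $v\in H^1_0(\Omega)$ as in \cite{BU}, is not by itself sufficient here: its gain carries a fixed constant $c_\Omega$ and cannot dominate $\|A\|_{L^\infty}$.) Once the convexification is rescaled in this way, the rest of your plan goes through.
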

\begin{rem}\label{Cer}
The Carleman estimate (\ref{Ce}) is still true for all $u$ in $H^1_0(\Omega)$ such that $\mathcal{L}_{A,q}u \in L^2(\Omega)$. This could be seen by a standard regularization method. The vanishing of the trace of the function $u$ is essential for this estimate. Notice that in the above inequality we bound the $L^2(\Omega_{+,0}(\xi))$-norm by the $L^2(\Omega_{-,0}(\xi))$-norm plus remainder terms in $L^2(\Omega)$-norm. In other words we bound the unknown measurements of the shadow face of $\partial\Omega$ by know measurements of the  illuminated face but we have to pay with remainder terms in $L^2(\Omega)$-norm. This fact will be useful in our approach.
\end{rem}

\subsection{From the boundary to the interior}

The following lemma was proved in \cite{DSFKSjU}. 
\begin{lem}\label{gmagneticf}
Let $\Omega \subset \mathbb{R}^n$ be a bounded open set with smooth boundary. If $A\in C^1(\overline{\Omega}; \mathbb{R}^n)$ and $q\in L^{\infty}(\Omega)$ then for all $u,v$ in $L^2(\Omega)$ such that  $\Delta u,\Delta v\in L^2{(\Omega)} $, we have the magnetic Green formula
\begin{equation}
\begin{aligned}
& \left \langle  \mathcal{L}_{A,q}u,v   \right \rangle_{L^2(\Omega} - \left \langle u, \mathcal{L}_{A,\overline{q}}v \right \rangle_{L^2(\Omega)}\\
&  \qquad \qquad = \left \langle u, (\partial_\nu + i\nu\cdot A)v \right \rangle_{L^2(\partial \Omega)} - \left \langle  (\partial_\nu + i\nu\cdot A)u, v  \right \rangle_{L^2(\partial \Omega)}.
\end{aligned}
\end{equation}\end{lem}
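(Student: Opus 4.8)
The plan is to reduce the whole identity to the elementary scalar Green formula by exploiting the factorization $\mathcal{L}_{A,q}=\sum_{j=1}^{n}X_j^2+q$, where $X_j:=D_j+A_j=-i\partial_j+A_j$. Since $A_j$ is real and $D_j=-i\partial_j$ is formally self-adjoint, each first-order operator $X_j$ is formally self-adjoint; this is exactly the structural fact that will make the \emph{same} boundary operator $\partial_\nu+i\nu\cdot A$ appear on both sides of the formula. I would first establish the identity for $u,v\in C^{\infty}(\overline{\Omega})$ by integration by parts, and then remove the regularity by a density argument.

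For smooth functions, a single integration by parts (using that $A_j$ is real, so multiplication by $A_j$ is self-adjoint) gives the first-order Green identity
\[
\langle X_j u,v\rangle_{L^2(\Omega)}-\langle u,X_j v\rangle_{L^2(\Omega)}=-i\int_{\partial\Omega}u\bar v\,\nu_j\,dS .
\]
Applying this twice, first with $u$ replaced by $X_j u$ and then with $v$ replaced by $X_j v$, yields
\[
\langle X_j^2 u,v\rangle-\langle u,X_j^2 v\rangle=-i\int_{\partial\Omega}\bigl((X_j u)\bar v+u\,\overline{X_j v}\bigr)\nu_j\,dS .
\]
Summing over $j$ and using $\sum_j\nu_j X_j=-i(\partial_\nu+i\nu\cdot A)$ converts the right-hand side into precisely $\langle u,(\partial_\nu+i\nu\cdot A)v\rangle_{L^2(\partial\Omega)}-\langle(\partial_\nu+i\nu\cdot A)u,v\rangle_{L^2(\partial\Omega)}$. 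Finally the zeroth-order terms contribute nothing, because $\langle qu,v\rangle=\int qu\bar v=\int u\,\overline{\bar q v}=\langle u,\bar q v\rangle$; this cancellation is exactly why the adjoint potential in the second slot has to be $\bar q$ and not $q$. This completes the smooth case and already displays every feature of the stated formula.

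The main obstacle is the low regularity built into the hypotheses: $u,v\in L^2(\Omega)$ with only $\Delta u,\Delta v\in L^2(\Omega)$. For such functions the traces $u|_{\partial\Omega}$ and $\partial_\nu u|_{\partial\Omega}$ are not classical and must be read in $H^{-1/2}(\partial\Omega)$ and $H^{-3/2}(\partial\Omega)$ respectively, with the boundary integrals understood as the corresponding duality pairings. The device I would use is to equip $\mathcal{H}:=\{u\in L^2(\Omega):\Delta u\in L^2(\Omega)\}$ with the graph norm $\|u\|_{L^2}+\|\Delta u\|_{L^2}$, recall that $C^{\infty}(\overline{\Omega})$ is dense in $\mathcal{H}$, and that the Dirichlet and Neumann trace maps extend continuously from $\mathcal{H}$ to $H^{-1/2}(\partial\Omega)$ and $H^{-3/2}(\partial\Omega)$. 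One then passes to the limit in the smooth identity.

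The genuinely delicate point in this limiting step is the magnetic first-order part: when $u$ is merely in $L^2$, $\nabla u$ need not lie in $L^2$, so the terms $A\cdot D u$ and $D\cdot A u$ are a priori only distributions and the pairings $\langle\mathcal{L}_{A,q}u,v\rangle$ must be set up as dual pairings (interior $H^{-1}$--$H^{1}$ and boundary $H^{\mp1/2}$--$H^{\pm1/2}$, $H^{\mp3/2}$--$H^{\pm3/2}$) rather than naive $L^2$ integrals. The clean way to organize this is to split off the pure Laplacian, handling $\langle-\Delta u,v\rangle-\langle u,-\Delta v\rangle$ by the standard Green formula on $\mathcal{H}$ (which supplies the $\partial_\nu$ part of the boundary operator), and then to treat the first-order terms by moving the derivative onto the $C^1$ coefficient $A$, so that the surviving boundary contributions assemble into the $i\nu\cdot A$ part and the remaining interior contributions cancel in the difference. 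In the applications of this paper the solutions $u,v$ are in fact in $H^{1}(\Omega)$, so the pairings are honest integrals and the above reduction is routine; the full $\mathcal{H}$-statement then follows from it by the density and trace-continuity argument, which is why I regard the functional-analytic set-up of the pairings, rather than the algebra of the integration by parts, as the crux.
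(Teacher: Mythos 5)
The paper does not actually prove this lemma: it is quoted with the single sentence ``The following lemma was proved in \cite{DSFKSjU}'', so there is no internal proof to compare yours against. Your argument is the natural one, and its algebraic core is correct: the first-order identity $\langle X_j u,v\rangle_{L^2(\Omega)}-\langle u,X_j v\rangle_{L^2(\Omega)}=-i\int_{\partial\Omega}u\bar v\,\nu_j\,dS$ for $X_j=D_j+A_j$ (using that $A_j$ is real), its double application, the summation identity $\sum_j\nu_jX_j=-i(\partial_\nu+i\nu\cdot A)$, and the cancellation $\langle qu,v\rangle=\langle u,\bar q v\rangle$ (which is indeed why $\mathcal{L}_{A,\overline{q}}$ must sit in the second slot) together give exactly the stated formula for $u,v\in C^{\infty}(\overline{\Omega})$. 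This is surely the computation underlying the cited result.

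The gap is in the limiting step, and it is worth being precise about why it cannot be carried out at the stated generality. If $u$ and $v$ are both only in $\mathcal{H}=\{w\in L^2(\Omega):\Delta w\in L^2(\Omega)\}$, then $u|_{\partial\Omega}\in H^{-1/2}(\partial\Omega)$ while $(\partial_\nu+i\nu\cdot A)v|_{\partial\Omega}\in H^{-3/2}(\partial\Omega)$; there is no duality pairing between $H^{-1/2}$ and $H^{-3/2}$, so the right-hand side of the formula has no classical meaning, and no density argument can converge to an undefined pairing. Concretely, you can fix $v\in C^{\infty}(\overline{\Omega})$ and pass to the limit $u_k\to u$ in the graph norm --- your device of moving the derivative onto the $C^1$ coefficient, i.e.\ rewriting $\int_{\Omega} (A\cdot Du_k)\bar v$ with the derivative on $A\bar v$ plus a boundary term, handles the interior terms, and the boundary pairings are then $H^{-1/2}\times H^{1/2}$ and $H^{-3/2}\times H^{3/2}$, which are fine --- but the subsequent limit in $v$ breaks down for the term $\langle(\partial_\nu+i\nu\cdot A)u,v\rangle_{L^2(\partial\Omega)}$ for exactly the reason above. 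So what your argument actually proves is the formula for $u\in\mathcal{H}$ and $v$ smooth, or, as you note at the end, for $u,v\in H^1(\Omega)$ with $L^2$ Laplacians, where traces lie in $H^{1/2}$, normal derivatives in $H^{-1/2}$, and every pairing is an honest duality. That weaker version is all the paper ever uses (in the Alessandrini identity the functions $u_1$, $u_2$ and the auxiliary $w$ are $H^1(\Omega)$ solutions), so your proof is adequate for the paper's purposes; but you should state explicitly that the lemma as written, with both functions merely in $\mathcal{H}$, requires either an extra hypothesis or a reinterpretation of the boundary terms --- an imprecision your write-up inherits from the statement rather than resolves.
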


\begin{lem}
Let $\Omega \subset \mathbb{R}^n$ be a bounded open set with smooth boundary. If $A_1, A_2\in C^2(\overline{\Omega}; \mathbb{R}^n)$ and $q_1, q_2\in L^{\infty}(\Omega)$ then
\begin{equation}\label{al}
\begin{aligned}
& \left \langle  (\Lambda_1 - \Lambda_2)u_1,u_2 \right \rangle_{L^2(\partial\Omega)}\\
&=\int_{\Omega} \left[ (A_1-A_2)\cdot(Du_1 \overline{u}_2 + u_1 \overline{Du}_2) + (A_1^2-A_2^2+q_1-q_2)u_1\overline{u}_2\right]  ,
 \end{aligned}
\end{equation}
where $u_1, u_2 \in H^{1}(\Omega)$ satisfying $\mathcal{L}_{A_1,q_1}u_1=0$ and $\mathcal{L}_{A_2, \overline{q}_2}u_2=0$ in $\Omega$.
\end{lem}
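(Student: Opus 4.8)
The plan is to derive the integral identity \eqref{al} by combining the magnetic Green formula of Lemma~\ref{gmagneticf} with the definition of the partial Dirichlet--Neumann map. First I would observe that since $u_1$ solves $\mathcal{L}_{A_1,q_1}u_1=0$ and $u_2$ solves $\mathcal{L}_{A_2,\overline{q}_2}u_2=0$, and since both belong to $H^1(\Omega)$ with $\Delta u_j\in L^2(\Omega)$ (this follows because $\mathcal{L}_{A_j,q_j}u_j=0$ lets us solve for $\Delta u_j=-(A_j\cdot Du_j+D\cdot(A_ju_j)+A_j^2u_j+q_ju_j)$, which lies in $L^2$ given the $C^2$ and $L^\infty$ hypotheses), the formula of Lemma~\ref{gmagneticf} applies. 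Applying it with $u=u_1$, $v=u_2$, $A=A_1$, $q=q_1$, and noting that $\mathcal{L}_{A_1,q_1}u_1=0$ while $\mathcal{L}_{A_1,\overline{q}_1}u_2$ need not vanish, I would then want to rewrite the boundary terms so that $(\partial_\nu+iA_1\cdot\nu)u_1$ on $F$ is recognized as $\Lambda_1 u_1$ (modulo the cutoff, see the remark below).

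The central algebraic step is to expand the two operators $\mathcal{L}_{A_1,q_1}$ and $\mathcal{L}_{A_2,\overline{q}_2}$ acting on the product $u_1\overline{u}_2$ and extract the difference. Concretely, using $\mathcal{L}_{A,q}=D^2+A\cdot D+D\cdot A+A^2+q$, I would compute
\[
\langle \mathcal{L}_{A_1,q_1}u_1,u_2\rangle_{L^2(\Omega)}-\langle u_1,\mathcal{L}_{A_2,\overline{q}_2}u_2\rangle_{L^2(\Omega)}
\]
and note that both inner products vanish by the equations satisfied by $u_1$ and $u_2$, so the entire expression equals $0$. The trick is to instead compute the \emph{same} left-hand side using Lemma~\ref{gmagneticf} but with the \emph{mismatched} pairing: I would write the left side of \eqref{al} as a boundary pairing and then use Green's formula twice, once for $A_1$ and once for $A_2$, to convert it into the interior integral. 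The key identity to exploit is that the difference of the two bilinear forms
\[
(D+A_1)^2+q_1-\bigl((D+A_2)^2+\overline{q}_2\bigr)
\]
acting in the pairing $\langle\,\cdot,u_1\overline{u}_2\rangle$ collapses, after integration by parts, to the volume integrand $(A_1-A_2)\cdot(Du_1\,\overline{u}_2+u_1\,\overline{Du}_2)+(A_1^2-A_2^2+q_1-q_2)u_1\overline{u}_2$. This is a routine but careful bookkeeping computation: the first-order terms $A_1\cdot Du_1-A_2\cdot Du_1$ and their conjugate-transpose partners assemble into the stated $(A_1-A_2)$-weighted expression, while the zeroth-order terms give the $A_1^2-A_2^2+q_1-q_2$ piece.

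The step I expect to be the main obstacle is matching the boundary pairing $\langle(\Lambda_1-\Lambda_2)u_1,u_2\rangle_{L^2(\partial\Omega)}$ on the left of \eqref{al} against the raw boundary terms produced by Green's formula. One must verify that the normal-derivative-plus-magnetic-term $(\partial_\nu+i\nu\cdot A_j)u_j$ appearing in Lemma~\ref{gmagneticf} is exactly the co-normal trace defining $\Lambda_j$, and that the two copies (for $j=1$ and $j=2$) combine so that the $A^2$ and $q$ contributions land correctly. Since $A_1=A_2$ on $\partial\Omega$ by the standing hypothesis in Theorems~\ref{SMP} and~\ref{SEP}, the magnetic contributions $i\nu\cdot A_1$ and $i\nu\cdot A_2$ to the co-normal derivatives agree on $\partial\Omega$, which is what allows $(\Lambda_1-\Lambda_2)u_1$ to be written cleanly without extra boundary magnetic terms; I would flag this boundary agreement explicitly as the point that makes the identity close. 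Everything else is integration by parts with the $C^2$ regularity of the potentials guaranteeing that all boundary integrals are well defined.
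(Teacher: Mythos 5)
Your proposal has a genuine gap, and it sits exactly at the step you yourself flag as ``the main obstacle'': matching the boundary pairing with co-normal traces. The term $\Lambda_2 u_1$ on the left of \eqref{al} means $\Lambda_{A_2,q_2}$ applied to the Dirichlet datum $u_1|_{\partial\Omega}$, and by the definition of the DN map this equals $(\partial_\nu + i\nu\cdot A_2)w|_{\partial\Omega}$, where $w$ is the solution of the \emph{auxiliary} Dirichlet problem $\mathcal{L}_{A_2,q_2}w=0$ in $\Omega$, $w|_{\partial\Omega}=u_1|_{\partial\Omega}$ (this is exactly \eqref{artificial} in the paper). It is \emph{not} $(\partial_\nu+i\nu\cdot A_2)u_1$, because $u_1$ does not solve the $\mathcal{L}_{A_2,q_2}$ equation; nor is it $(\partial_\nu+i\nu\cdot A_2)u_2$, which is $\Lambda_{A_2,\overline{q}_2}$ applied to the trace of the \emph{different} function $u_2$. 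Your computation only ever applies Green's formula to the pair $(u_1,u_2)$, so it can only produce boundary terms built from $\partial_\nu u_1$ and $\partial_\nu u_2$; the quantity $\langle \Lambda_2 u_1, u_2\rangle_{L^2(\partial\Omega)}$ never appears, and the argument cannot close. This is precisely why the paper introduces $w$ and evaluates $\langle \mathcal{L}_{A_2,q_2}(w-u_1), u_2\rangle_{L^2(\Omega)}$ in two ways: once by Lemma~\ref{gmagneticf}, using $w-u_1\in H^1_0(\Omega)$ and $\mathcal{L}_{A_2,\overline{q}_2}u_2=0$, which yields $\langle(\Lambda_1-\Lambda_2)u_1,u_2\rangle_{L^2(\partial\Omega)} - i\langle\nu\cdot(A_1-A_2)u_1,u_2\rangle_{L^2(\partial\Omega)}$; and once using $\mathcal{L}_{A_2,q_2}(w-u_1)=(\mathcal{L}_{A_1,q_1}-\mathcal{L}_{A_2,q_2})u_1$ and integrating by parts, which yields the volume integral of \eqref{al} plus the same magnetic boundary term, so that the two boundary contributions cancel. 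A variant of your route could be rescued by invoking the transpose property $\langle\Lambda_{A_2,q_2}f,g\rangle_{L^2(\partial\Omega)}=\langle f,\Lambda_{A_2,\overline{q}_2}g\rangle_{L^2(\partial\Omega)}$, which turns $\langle\Lambda_2 u_1,u_2\rangle$ into $\langle u_1,(\partial_\nu+i\nu\cdot A_2)u_2\rangle$; but you never state this property, and its proof is again the auxiliary-solution argument, so it cannot be taken for granted.

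A second, related error is your appeal to $A_1=A_2$ on $\partial\Omega$ ``as the point that makes the identity close.'' That hypothesis belongs to Theorems~\ref{SMP} and~\ref{SEP}, not to this lemma, which is stated for arbitrary $A_1,A_2\in C^2(\overline{\Omega};\mathbb{R}^n)$ and $q_1,q_2\in L^\infty(\Omega)$; you are not entitled to use it here. Nor is it needed: in the correct computation the magnetic boundary terms $i\langle\nu\cdot(A_1-A_2)u_1,u_2\rangle_{L^2(\partial\Omega)}$ arise once from each of the two evaluations of $\langle\mathcal{L}_{A_2,q_2}(w-u_1),u_2\rangle_{L^2(\Omega)}$ and cancel identically, whatever the boundary values of $A_1-A_2$. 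That your bookkeeping appears to require this extra assumption is a symptom of the missing auxiliary function: without $w$, the terms that should cancel in matched pairs never both appear.
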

\begin{proof}
The proof of this lemma was implicit in section $4$ of \cite{DSFKSjU}, and only for completeness we prove it following their ideas. Let $u_1, u_2 \in H^{1}(\Omega)$ such that $\mathcal{L}_{A_1,q_1}u_1=0$ and $\mathcal{L}_{A_2, \overline{q}_2}u_2=0$ in $\Omega$. We introduce an auxiliary function $w$ satisfying in $\Omega$
\begin{align}
\label{artificial}
     \begin{cases}
            \mathcal{L}_{A_2,q_2}w=0,  & 
            \\ w|_{\d \Omega} = u_1|_{\d \Omega} .
     \end{cases}
\end{align}
Thus, by the definition of the DN map we get
\begin{equation}\label{ale1}
\begin{aligned}
&\left \langle  (\Lambda_1 - \Lambda_2)u_1,u_2 \right \rangle_{L^2(\partial\Omega)} \\
& \qquad =\left \langle (\partial\nu+ i\nu\cdot A_1)u_1, u_2 \right \rangle - \left \langle (\partial\nu+ i\nu\cdot A_2)w, u_2 \right \rangle_{L^2(\partial\Omega)}\\
&\qquad= \left \langle \partial_\nu(u_1-w)+ i\nu\cdot (A_1-A_2)u_1,u_2 \right \rangle_{L^2(\partial\Omega)}.
\end{aligned}
\end{equation}
On the other hand, we compute $ \left \langle  \mathcal{L}_{A_2,q_2}(w-u_1), u_2  \right \rangle_{L^2(\Omega)}$ in two different ways. For the first one, we use Lemma \ref{gmagneticf} and (\ref{artificial})-(\ref{ale1}) to obtain
\begin{equation}\label{ale2}
\begin{aligned}
& \left \langle  \mathcal{L}_{A_2,q_2}(w-u_1), u_2  \right \rangle_{L^2(\Omega)}\\
& = \left \langle w-u_1, \mathcal{L}_{A_2, \overline{q}_2}u_2  \right \rangle_{L^2(\Omega)} + \left \langle w-u_1, (\partial_{\nu} + i\nu\cdot A_2)u_2 \right \rangle_{L^2(\partial\Omega)} \\
&\qquad   - \left \langle (\partial_{\nu} + i\nu\cdot A_2)(w-u_1), u_2 \right \rangle_{L^2(\partial\Omega)} \\
&=  \left \langle \partial_\nu(u_1-w),u_2 \right \rangle_{L^2(\partial\Omega)}\\
& = \left \langle  (\Lambda_1 - \Lambda_2)u_1,u_2 \right \rangle_{L^2(\partial\Omega)} - i \left \langle  \nu\cdot (A_1-A_2)u_1,u_2 \right \rangle_{L^2(\partial\Omega)} .
\end{aligned}
\end{equation}
For the second, again from (\ref{artificial}) and integration by parts we have
\begin{align*}
& \left \langle  \mathcal{L}_{A_2,q_2}(w-u_1), u_2  \right \rangle_{L^2(\Omega)} \\
&= \left \langle ( \mathcal{L}_{A_1,q_1} - \mathcal{L}_{A_2,q_2})u_1, u_2  \right \rangle_{L^2(\Omega)}\\
&= \left \langle  (A_1-A_2)\cdot Du_1 + D\cdot \left( (A_1-A_2)u_1 \right) + (A_1^2-A_2^2+q_1-q_2)u_1,u_2   \right \rangle_{L^2(\Omega)}\\
&= \int_{\Omega} \left( (A_1-A_2)\cdot(Du_1 \overline{u}_2 + u_1 \overline{Du}_2) + (A_1^2-A_2^2+q_1-q_2)u_1\overline{u}_2  \right)dx\\
& \qquad \qquad -i   \left \langle  \nu\cdot (A_1-A_2)u_1,u_2 \right \rangle_{L^2(\partial\Omega)}.
\end{align*}
Combining the above equality with (\ref{ale2}), we conclude the proof.
\end{proof}

The identity \ref{al} is called Alessandrini's identity and it was used by him to obtain a stability estimate for Calder\'on's problem \cite{A}. The idea to prove stability estimates for magnetic potentials is as follows:  we plug the solutions constructed for the magnetic Schr\"odinger operator $\mathcal{L}_{A,q}$ (see Theorems \ref{Zs} and \ref{DSFKSjUs})  into (\ref{al}) and then compute both left and right hand sides of the Alessandrini's identity separately. The left hand side gives us an estimate which involves the difference between the partial DN maps $ \Lambda_1^\sharp- \Lambda_2^\sharp $, and in computing the right hand side it will appear naturally the Radon transform of  $\chi_{\Omega}(A_1-A_2)$, where $\chi_\Omega$ denotes the characteristic function of $\Omega$. Finally combining both estimates and using estimates for the Radon transform obtained in \cite{CDSFR}, we will prove Theorem \ref{SMP}. The remainder of this section will be devoted to develop these ideas.
\begin{rem}
For technical reasons we introduce some constants.  Consider 
\begin{equation}\label{Ob}
c= \underset{x\in \Omega }{\sup}  \left |\xi\cdot x  \right |, \quad  \xi\in S^{n-1},
\end{equation}
($c$ is finite since $\Omega$ is bounded)  and $\epsilon>0$ small enough such that
\begin{equation}\label{Fn1}
F_N \subset F_{N, \epsilon}\subset\subset F,
\end{equation}
where
\begin{equation}\label{FN}
F_{N, \epsilon}= \underset{\xi\in N}\bigcup \Omega_{-,\epsilon}(\xi)
\end{equation}
and the set $F_N$ is defined in (\ref{FNBN}). In this setting, let $\chi\in C^\infty(\partial \Omega)$ be a cutoff function  supported in $F$ such that it is equals to $1$ on $F_{N, \epsilon}$.
\end{rem}

\begin{prop}\label{leftsideAI} Let $\Omega\subset\mathbb{R}^n$ be a bounded open set with smooth boundary. Consider two positive constants $c$ given by (\ref{Ob}) and $\epsilon$ satisfy  (\ref{Fn1}). Let $M>0$ and consider  $A_1, A_2\in \mathscr{A}(\Omega, M)$ and $q_1, q_2\in L^\infty(\Omega)$. If $u_1, u_2\in H^1(\Omega)$ such that $\mathcal{L}_{A_1, q_1}u_1=0$ and $\mathcal{L}_{A_2, \overline{q}_2}u_2=0$ then there exist two positive constants $\tau_0>0$ and $C>0$ (both depending on $n, \Omega, M, \epsilon$) such that  the estimate

\begin{equation}\label{aleinequality}
\begin{aligned}
&\left |  \left \langle (\Lambda_1- \Lambda_2)u_1, u_2  \right \rangle_{L^2(\partial\Omega)} \right |\\
& \leq  C\left \| \chi ( \Lambda_1 - \Lambda_2)  \right \|  \left(  \left \| u_1 \right \|_{H^1(\Omega)}  \left \| u_2 \right \|_{H^1(\Omega)} \right.\\
&\qquad \qquad  \left.  + e^{\tau c}\left \| u_1 \right \|_{H^1(\Omega)}   \left \| e^{\tau \xi \cdot x}u_2 \right \|_{L^{2}(\partial\Omega)} \right)  \\
&  \qquad+ C  \tau^{-\frac{1}{2}} \left \| e^{-\tau \xi \cdot x}(\mathcal{L}_{A_1,q_1} - \mathcal{L}_{A_2,q_2}) u_1 \right \|_{L^2(\Omega)}  \left \| e^{\tau \xi \cdot x} u_2 \right \|_{L^{2}(\partial\Omega)}\\
&\qquad  + C \left \| e^{-\tau \xi \cdot x}u_1 \right \|_{L^{2}(\partial\Omega)} \left \| e^{\tau \xi \cdot x} u_2 \right \|_{L^{2}(\partial\Omega)}
\end{aligned}
\end{equation}
holds true  for all $\tau \geq \tau_0$ and all $\xi\in N$.
\end{prop}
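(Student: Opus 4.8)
The plan is to combine identity (\ref{ale1}) with a cutoff splitting and the Carleman estimate with boundary terms of Proposition \ref{PCe}. Following the derivation of (\ref{al}), I introduce the auxiliary solution $w\in H^1(\Omega)$ of $\mathcal{L}_{A_2,q_2}w=0$ with $w|_{\partial\Omega}=u_1|_{\partial\Omega}$ and set $v:=u_1-w$, so that $v\in H^1_0(\Omega)$, $\mathcal{L}_{A_2,q_2}v=-(\mathcal{L}_{A_1,q_1}-\mathcal{L}_{A_2,q_2})u_1$, and by (\ref{ale1})
\[
\langle(\Lambda_1-\Lambda_2)u_1,u_2\rangle_{L^2(\partial\Omega)}=\langle\partial_\nu v,u_2\rangle_{L^2(\partial\Omega)}+i\langle\nu\cdot(A_1-A_2)u_1,u_2\rangle_{L^2(\partial\Omega)}.
\]
The magnetic boundary term is handled at once: since $\|A_1-A_2\|_{L^\infty}\le 2M$, writing $u_1\overline{u_2}=(e^{-\tau\xi\cdot x}u_1)\overline{(e^{\tau\xi\cdot x}u_2)}$ and applying Cauchy--Schwarz on $\partial\Omega$ yields the last term of (\ref{aleinequality}).

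Next I would split the Neumann pairing with the cutoff, $\langle\partial_\nu v,u_2\rangle=\langle\chi\partial_\nu v,u_2\rangle+\langle(1-\chi)\partial_\nu v,u_2\rangle$. On $\supp\chi$ we have $\chi\partial_\nu v=\chi(\Lambda_1-\Lambda_2)u_1-i\chi\nu\cdot(A_1-A_2)u_1$, so the first (measured) piece is estimated by the operator norm (\ref{partialnorm}) together with the trace inequality $\|u_j\|_{H^{1/2}(\partial\Omega)}\le C\|u_j\|_{H^1(\Omega)}$, producing the term $\|\chi(\Lambda_1-\Lambda_2)\|\,\|u_1\|_{H^1}\|u_2\|_{H^1}$; its magnetic remainder is again absorbed into the last term of (\ref{aleinequality}).

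The core is the unmeasured piece $\langle(1-\chi)\partial_\nu v,u_2\rangle$. Because $\chi\equiv1$ on $F_{N,\epsilon}\supset\partial\Omega_{-,\epsilon}(\xi)$ for every $\xi\in N$, the trace of $1-\chi$ is supported where $\xi\cdot\nu\ge\epsilon$, i.e. strictly inside the shadow face $\partial\Omega_{+,0}(\xi)$; there $\sqrt{\partial_\nu\varphi}=\sqrt{\xi\cdot\nu}\ge\sqrt\epsilon$, with $\varphi(x)=\xi\cdot x$. Inserting the weights and using $1\le\epsilon^{-1/2}\sqrt{\xi\cdot\nu}$ bounds this pairing by $\epsilon^{-1/2}$ times a weighted $L^2$-norm of $\partial_\nu v$ over $\partial\Omega_{+,0}(\xi)$ times a weighted boundary norm of $u_2$. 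I then apply the Carleman estimate (\ref{Ce}) to $v$ (with the appropriate orientation of the weight, Remark \ref{minuszeta}) to replace this shadow Neumann norm by the volume term $\tau^{-1/2}\|e^{-\tau\xi\cdot x}(\mathcal{L}_{A_1,q_1}-\mathcal{L}_{A_2,q_2})u_1\|_{L^2(\Omega)}$ plus the illuminated Neumann data on $\partial\Omega_{-,0}(\xi)$; this gives the penultimate term of (\ref{aleinequality}). The illuminated data occurs where $\chi\equiv1$, so there $\partial_\nu v=\chi(\Lambda_1-\Lambda_2)u_1-i\chi\nu\cdot(A_1-A_2)u_1$, and re-expressing it through the operator norm (\ref{partialnorm}), after bounding $e^{\tau\xi\cdot x}\le e^{\tau c}$ on $\Omega$ by (\ref{Ob}), produces the term carrying the prefactor $e^{\tau c}$.

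The step I expect to be the main obstacle is the precise bookkeeping of the exponential weights and of the two boundary faces in this Carleman step. Placing the unknown shadow Neumann data on the left of (\ref{Ce}) ties it to the weight $e^{\tau\xi\cdot x}$, whereas a bounded volume source $\tau^{-1/2}\|e^{-\tau\xi\cdot x}(\mathcal{L}_{A_1,q_1}-\mathcal{L}_{A_2,q_2})u_1\|_{L^2}$ and the natural weighted trace $\|e^{\tau\xi\cdot x}u_2\|_{L^2(\partial\Omega)}$ call for the opposite sign; reconciling the two—using Remark \ref{minuszeta} to orient the weight and trading the exponentials against $e^{\tau c}$ via (\ref{Ob}), on the shadow face where $u_2$ decays—is the delicate point and is exactly what forces the factor $e^{\tau c}$ into the second term of (\ref{aleinequality}). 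All constants depend only on $n,\Omega,M,\epsilon$, through Proposition \ref{PCe}, the trace theorem, and the well-posedness bound $\|w\|_{H^1(\Omega)}\le C\|u_1\|_{H^1(\Omega)}$ for the auxiliary Dirichlet problem.
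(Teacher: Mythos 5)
Your skeleton is the same as the paper's: the auxiliary solution $w$ of (\ref{artificial}) and identity (\ref{ale1}), the cutoff splitting (\ref{decomposition}), weighted Cauchy--Schwarz putting $e^{-\tau\xi\cdot x}$ on the $u_1$-factor and $e^{\tau\xi\cdot x}$ on the $u_2$-factor, the bound $1\leq\epsilon^{-1/2}\sqrt{\xi\cdot\nu}$ on $\partial\Omega\setminus\Omega_{-,\epsilon}(\xi)$, a boundary Carleman estimate applied to $v=u_1-w$, and the reconversion of the illuminated-face data (where $\chi\equiv 1$) into the operator norm; the measured piece and the magnetic boundary terms are treated exactly as in (\ref{unoxx})--(\ref{secondstepw}).

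The genuine gap sits at the Carleman step that you yourself flag, and neither of your proposed reconciliations closes it. What is needed is
\[
\bigl\|\sqrt{\xi\cdot\nu}\,e^{-\tau\xi\cdot x}\,\partial_\nu v\bigr\|_{L^2(\Omega_{+,0}(\xi))}\leq C\Bigl(\bigl\|\sqrt{-\xi\cdot\nu}\,e^{-\tau\xi\cdot x}\,\partial_\nu v\bigr\|_{L^2(\partial\Omega_{-,0}(\xi))}+\tau^{-1/2}\bigl\|e^{-\tau\xi\cdot x}\,\mathcal{L}_{A_2,q_2}v\bigr\|_{L^2(\Omega)}\Bigr),
\]
with the weight $e^{-\tau\xi\cdot x}$ \emph{throughout}. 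Appealing to Remark \ref{minuszeta}, i.e.\ replacing $\xi$ by $-\xi$ in (\ref{Ce}), flips the weight and the two faces simultaneously, so it gives control of the illuminated face in terms of the shadow face --- the wrong direction. Trading exponentials via (\ref{Ob}) costs a factor $e^{2\tau c}$ at each conversion, leaving $e^{4\tau c}$ in front of the volume term instead of the bare $\tau^{-1/2}\|e^{-\tau\xi\cdot x}(\mathcal{L}_{A_1,q_1}-\mathcal{L}_{A_2,q_2})u_1\|_{L^2(\Omega)}$ appearing in (\ref{aleinequality}); this is fatal, because in Corollary \ref{righthandside} that term has size $\tau^{1/2}$ and, after multiplication by $\tau^{-1}$, supplies the decaying error $\tau^{-1/2}$ against which one chooses $\tau\simeq c^{-1}\bigl|\log\|\Lambda_1^{\sharp}-\Lambda_2^{\sharp}\|\bigr|$ in Proposition \ref{RTRA}; an extra $e^{4\tau c}$ makes that error blow up and the stability argument collapses. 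Moreover, the factor $e^{\tau c}$ in the second term of (\ref{aleinequality}) does not come from any such trade: it arises from bounding $e^{-\tau\xi\cdot x}\leq e^{\tau c}$ when the measured data $\chi(\Lambda_1-\Lambda_2)u_1$ is re-expressed through the operator norm, as in (\ref{secondstepw}). What actually closes the proof --- and what the paper does in (\ref{firststepw}) --- is to use the displayed inequality as it stands: this is the correct orientation of the Dos Santos Ferreira--Kenig--Sj\"ostrand--Uhlmann boundary Carleman estimate, in which the controlled face is the one where the normal derivative of the exponent is negative (the exponent here is $-\tau\xi\cdot x$, so the controlled face is the shadow face $\Omega_{+,0}(\xi)$). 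The printed statement of Proposition \ref{PCe}, with weight $e^{+\tau\varphi}$ and the shadow face on the left, has the two faces interchanged; that misprint is the source of the tension you were trying to resolve, and the remedy is to use the estimate with its correct face--weight pairing, not to compensate with exponential factors.
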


\begin{proof}   We begin by denoting $\Lambda_{A_i, q_i}= \Lambda_i$ for $i=1,2$. Let us decompose the difference between the DN maps in the following way
\[
\Lambda_1 - \Lambda_2 = \chi (\Lambda_1 - \Lambda_2)+ (1-\chi) (\Lambda_1 - \Lambda_2).
\]
Thus we have
\begin{equation}\label{decomposition}
\begin{aligned}
&\left \langle (\Lambda_1- \Lambda_2)u_1, u_2  \right \rangle_{L^2(\partial\Omega)} = \left \langle\chi (\Lambda_1- \Lambda_2)u_1, u_2  \right \rangle_{L^2(\partial\Omega)}   \\
&\qquad \qquad \qquad \qquad \qquad \qquad \;  \; +  \left \langle(1-\chi) (\Lambda_1- \Lambda_2)u_1, u_2  \right \rangle_{L^2(\partial \Omega)}
\end{aligned}
\end{equation}

We now estimate each term of the previous summation. For the first term, the Cauchy-Schwarz inequality gives
\begin{equation}\label{unoxx}
\begin{aligned}
\left |  \int_{\partial\Omega}\chi   (\Lambda_1- \Lambda_2)u_1\overline{u}_2 dS \right |&  \leq   \left \| \chi (\Lambda_1 - \Lambda_2) \right \| \left \| u_1 \right \|_{H^{\frac{1}{2}}(\partial \Omega)} \left \| u_2 \right \|_{H^{\frac{1}{2}}(\partial \Omega)}\\
&\leq \left \| \chi (\Lambda_1 - \Lambda_2) \right \| \left \| u_1 \right \|_{H^1(\Omega)}  \left \| u_2 \right \|_{H^1(\Omega)}.
\end{aligned}
\end{equation}

The second term requires a more refined analysis. Let $w$ be a function such that it satisfies (\ref{artificial}). Then for every $\xi \in N$ we get


\begin{equation}\label{secondtermt}
\begin{aligned}
&\left |  \int_{\partial\Omega}(1-\chi)   (\Lambda_1- \Lambda_2)u_1\overline{u}_2 dS \right | \\
& = \left |  \int_{\Omega_{-,\epsilon}(\xi)\cup (\partial\Omega\setminus \Omega_{-,\epsilon}(\xi))}(1-\chi)   (\Lambda_1- \Lambda_2)u_1\overline{u}_2 dS \right | \\
& = \left |  \int_{\partial\Omega\setminus \Omega_{-,\epsilon}(\xi)} (1-\chi)   (\Lambda_1- \Lambda_2)u_1\overline{u}_2 dS  \right | \\
&\leq  C_1 \left \| e^{-\tau \xi\cdot x}  (\Lambda_1- \Lambda_2)u_1 \right \|_{L^2(\partial\Omega\setminus\Omega_{-,\epsilon}(\xi))}  \left \| e^{\tau \xi\cdot x} u_2\right \|_{L^2(\partial\Omega\setminus \Omega_{-,\epsilon}(\xi))}.
\end{aligned}
\end{equation}
We next turn to the $L^2(\partial\Omega\setminus \Omega_{-,\epsilon}(\xi))$-norms in the previous inequality. Since $u_1\in H^1(\Omega)$ and $\mathcal{L}_{A_2,q_2}(w-u_1)= (\mathcal{L}_{A_1,q_1} - \mathcal{L}_{A_2,q_2}) u_1$ (where $w$ is the auxiliary function in (\ref{artificial})), it follows that $\mathcal{L}_{A_2,q_2}(w-u_1)\in L^2(\Omega)$. Moreover, we have that $w-u_1\in H^1_0(\Omega)$. Hence the Carleman estimate (\ref{Ce}) from Proposition \ref{PCe} and Remark \ref{Cer} imply that
\begin{equation}\label{firststepw}
\begin{aligned}
& \left \| e^{-\tau \xi\cdot x}   (\Lambda_1- \Lambda_2)u_1 \right \|_{L^2(\partial\Omega\setminus \Omega_{-,\epsilon}(\xi))} \\
& =  \left\|    e^{-\tau \xi\cdot x}   \left( \partial_\nu(u_1-w) + i\nu\cdot (A_1-A_2)u_1   \right) \right\|_{L^2(\partial\Omega\setminus \Omega_{-,\epsilon}(\xi))}  \\
& \leq \left\|    e^{-\tau \xi\cdot x}    \partial_\nu(u_1-w) \right\|_{L^2(\partial\Omega\setminus \Omega_{-,\epsilon}(\xi))}   + C_1 \left\|    e^{-\tau \xi\cdot x}   u_1 \right\|_{L^2(\partial\Omega\setminus \Omega_{-,\epsilon}(\xi))} \\
& \leq  \dfrac{1}{\sqrt{\epsilon}}    \left\|   \sqrt{\left\langle \xi \cdot \nu(\cdot)  \right \rangle}    e^{-\tau \xi\cdot x}    \partial_\nu(u_1-w) \right\|_{L^2(\partial\Omega\setminus \Omega_{-,\epsilon}(\xi))}  \\
&\qquad \qquad + C_1 \left\|    e^{-\tau \xi\cdot x}   u_1 \right\|_{L^2(\partial\Omega\setminus \Omega_{-,\epsilon}(\xi))} \\
&  \leq  \dfrac{1}{\sqrt{\epsilon}}    \left\|   \sqrt{\left\langle \xi \cdot \nu(\cdot)  \right \rangle}    e^{-\tau \xi\cdot x}    \partial_\nu(u_1-w) \right\|_{L^2( \Omega_{+,0}(\xi))} \\
&\qquad \qquad+ C_1 \left\|    e^{-\tau \xi\cdot x}  u_1 \right\|_{L^2(\partial\Omega)}  \\
&  \leq  \dfrac{C_2}{\sqrt{\epsilon}}\left( \left \|  \sqrt{-\left\langle \xi \cdot \nu(\cdot)  \right \rangle} e^{-\tau \xi\cdot x}  \partial_{\nu}(u_1-w) \right \|_{L^2(\partial\Omega_{-,0}(\xi))}  \right.\\
& \;\; \; \;   \left. + \tau^{-\frac{1}{2}} \left \| e^{-\tau \xi\cdot x}   \mathcal{L}_{A_2,q_2} (w-u_1) \right \|_{L^2(\Omega)}    \right) +C_1 \left.  \left\|    e^{-\tau \xi\cdot x}   u_1 \right\|_{L^2(\partial\Omega)} \right.\\
&   \leq  \dfrac{C_2}{\sqrt{\epsilon}}\left( \left \| e^{-\tau \xi\cdot x}  \partial_{\nu}(u_1-w) \right \|_{L^2(\partial\Omega_{-,0}(\xi))}  \right.\\
& \;\; \; \;   \left. + \tau^{-\frac{1}{2}} \left \| e^{-\tau \xi\cdot x}   (\mathcal{L}_{A_1,q_1} - \mathcal{L}_{A_2,q_2}) u_1 \right \|_{L^2(\Omega)}    \right) +C_1 \left.  \left\|    e^{-\tau \xi\cdot x}   u_1 \right\|_{L^2(\partial\Omega)}.\right.
\end{aligned}
\end{equation}
Now we estimate the $L^2(\partial\Omega_{-,0}(\xi))$-norm in the last inequality as follows:
 \begin{equation}\label{secondstepw}
\begin{aligned}
&  \left \|  e^{-\tau \xi\cdot x}  \partial_{\nu}(u_1-w) \right \|_{L^2(\partial\Omega_{-,0}(\xi))}  \\
& =   \left \| e^{-\tau \xi\cdot x}\left [ (\Lambda_1-\Lambda_2)u_1 -i \nu\cdot(A_1-A_2)u_1 \right ] \right \|_{L^2(\partial\Omega_{-,0}(\xi))}    \\
& \leq  \left \| e^{-\tau \xi\cdot x} \chi (\Lambda_1-\Lambda_2)u_1  \right \|_{L^2(\partial\Omega)}  \\
& \qquad +  \left \| e^{-\tau \xi\cdot x} i \nu\cdot(A_1-A_2)u_1  \right \|_{L^2(\partial\Omega_{-,0}(\xi))}  \\
& \leq  e^{\tau k}  \left \| \chi (\Lambda_1 - \Lambda_2) \right \| \left \| u_1 \right \|_{H^{\frac{1}{2}}(\partial \Omega)}  +  C_3\left \| e^{-\tau \xi\cdot x}  u_1  \right \|_{L^2(\partial\Omega)}. 
\end{aligned}
\end{equation}
Thus, replacing (\ref{firststepw}) and  (\ref{secondstepw}) into  (\ref{secondtermt}) gives us
\begin{equation} \label{unox}
\begin{aligned}
& \left |  \int_{\partial\Omega}(1-\chi)   (\Lambda_1- \Lambda_2)u_1\overline{u}_2 dS \right | \\
& \leq C_4\left(   \epsilon^{-1/2}e^{\tau k}   \left \| \chi (\Lambda_1 - \Lambda_2) \right \| \left \| u_1 \right \|_{H^1(\Omega)}                                     \right.\\
& \qquad \qquad \qquad  +\epsilon^{-1/2} \tau^{-\frac{1}{2}} \left \| e^{-\tau \xi\cdot x}(\mathcal{L}_{A_1,q_1} - \mathcal{L}_{A_2,q_2}) u_1 \right \|_{L^2(\Omega)}  \\
& \qquad \qquad \qquad  \qquad \qquad \qquad   \left.   + \left \| e^{-\tau \xi\cdot x} u_1  \right \|_{H^1(\Omega)}  \right) \left \| e^{\tau \xi\cdot x} u_2\right \|_{L^2(\partial\Omega)}. 
\end{aligned}
\end{equation}
Finally we conclude the proof replacing (\ref{unoxx}) and (\ref{unox}) into (\ref{decomposition}).
\end{proof}


\begin{cor} \label{righthandside}Let $\Omega\subset \mathbb{R}^n$ be a bounded open set with smooth boundary. Let $M>0$ and consider $A_1, A_2\in \mathscr{A}(\Omega, M)$ and $q_1, q_2\in L^\infty(\Omega)$. Let $u_1\in H^1(\Omega)$ be a solution of  $\mathcal{L}_{A_1, q_1}u=0$ constructed in Theorem  \ref{Zs} and let $u_2\in H^1(\Omega)$ be a solution of  $\mathcal{L}_{A_2, q_2}u=0$ constructed in Theorem  \ref{DSFKSjUs}. Then there exist $\tau_0>0$ and $C>0$ (both depending on $n, \Omega, M$) such that the estimate
\begin{equation}\label{dnmap}
\begin{aligned}
& \tau^{-1} \left |  \left \langle (\Lambda_1- \Lambda_2)u_1, u_2  \right \rangle_{L^2(\partial\Omega)} \right |\\
&\qquad \leq C \left(  e^{4\tau c}\left \| \Lambda_1^\sharp- \Lambda_2^\sharp \right \|+\tau^{-\frac{1}{2}} \right)  \left \|\overline{g} \right \|_{H^2(\Omega)}
\end{aligned}
\end{equation}
holds true for all $\tau \geq \tau_0$.
\end{cor}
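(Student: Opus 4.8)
The plan is to feed the two explicit families of CGO solutions into the abstract bound (\ref{aleinequality}) of Proposition \ref{leftsideAI} and then control, one at a time, each of the norms appearing on its right-hand side. Fix $\xi\in N$ and a unit vector $\zeta\perp\xi$, take $u_1$ as in Theorem \ref{Zs} and $u_2$ as in Theorem \ref{DSFKSjUs}, and recall their shapes
\[
u_1=e^{\tau(\xi+i\zeta)\cdot x}\bigl(e^{\Phi_1}+r_1\bigr)-e^{\tau l}b,\qquad u_2=e^{-\tau(\xi-i\zeta)\cdot x}\bigl(e^{\Phi_2}g+r_2\bigr).
\]
Since $\chi$ is the cutoff built in (\ref{FN}), one has $\left\|\chi(\Lambda_1-\Lambda_2)\right\|=\left\|\Lambda_1^\sharp-\Lambda_2^\sharp\right\|$, so that factor already sits in the desired form. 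Proposition \ref{leftsideAI} is valid for all $\xi\in N$, so this choice is legitimate.

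First I would bound the two $H^1(\Omega)$-norms. Because $|e^{\tau(\xi+i\zeta)\cdot x}|=e^{\tau\xi\cdot x}\le e^{\tau c}$ on $\Omega$ by (\ref{Ob}), and $\re l=\xi\cdot x-k\le\xi\cdot x$, the $L^2$-sizes of $u_1$ and $u_2$ are $\lesssim e^{\tau c}$ and $\lesssim e^{\tau c}\|g\|_{H^2}$, using $\Phi_i\in W^{1,\infty}$, the bounds (\ref{bou}) on $l,b$, and the decay $\|r_1\|_{L^2}\lesssim\tau^{-1}$, $\|r_2\|_{L^2}\lesssim\tau^{-1}\|g\|_{H^2}$. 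Each gradient costs one extra power of $\tau$ (the dominant contribution being the differentiation of the exponential), whence $\|u_1\|_{H^1(\Omega)}\lesssim\tau e^{\tau c}$ and $\|u_2\|_{H^1(\Omega)}\lesssim\tau e^{\tau c}\|g\|_{H^2}$. For the weighted boundary norms the growing exponentials cancel: $e^{\tau\xi\cdot x}u_2=e^{i\tau\zeta\cdot x}(e^{\Phi_2}g+r_2)$ has modulus independent of $\tau$, so $\|e^{\tau\xi\cdot x}u_2\|_{L^2(\partial\Omega)}\lesssim\|g\|_{H^2}$ by the trace theorem and Theorem \ref{DSFKSjUs}(iii); similarly $e^{-\tau\xi\cdot x}u_1=e^{i\tau\zeta\cdot x}(e^{\Phi_1}+r_1)-e^{\tau(l-\xi\cdot x)}b$, where $\re(l-\xi\cdot x)=-k\le0$ on $\supp b\subset G$, so the $b$-term is controlled by (\ref{kapa}) and one obtains $\|e^{-\tau\xi\cdot x}u_1\|_{L^2(\partial\Omega)}\lesssim 1$ (using $\|r_1\|_{L^2(\partial\Omega)}\lesssim\tau^{-1/2}$ from Theorem \ref{Zs}(iv)).

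The one delicate point is the term $\|e^{-\tau\xi\cdot x}(\mathcal{L}_{A_1,q_1}-\mathcal{L}_{A_2,q_2})u_1\|_{L^2(\Omega)}$. The key structural fact is that the two magnetic operators share the same principal part $D^2$, so their difference is a \emph{first-order} differential operator with coefficients built from $A_1-A_2$, $A_1^2-A_2^2$ and $q_1-q_2$; applied to $u_1$ it therefore produces at most one power of $\tau$ (from $Du_1\sim\tau(\xi+i\zeta)u_1$) rather than the $\tau^2$ that the full operator would give. Carrying the weight $e^{-\tau\xi\cdot x}$ through, and using $\|A_1-A_2\|_{W^{2,\infty}}\le 2M$, the decay of $r_1$, and the control of the $b$-term exactly as above, yields $\|e^{-\tau\xi\cdot x}(\mathcal{L}_{A_1,q_1}-\mathcal{L}_{A_2,q_2})u_1\|_{L^2(\Omega)}\lesssim\tau$. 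I expect this to be the main obstacle: one must check carefully that the second-order contributions genuinely cancel so that only $O(\tau)$ survives, since an $O(\tau^2)$ bound here would destroy the final estimate.

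Finally I would substitute all of these into (\ref{aleinequality}), multiply by $\tau^{-1}$, and collect terms. The two contributions carrying $\|\Lambda_1^\sharp-\Lambda_2^\sharp\|$ produce $\|\Lambda_1^\sharp-\Lambda_2^\sharp\|\,\tau e^{2\tau c}\|g\|_{H^2}$, and since $\tau\le e^{2\tau c}$ for $\tau\ge\tau_0$ this is $\le\|\Lambda_1^\sharp-\Lambda_2^\sharp\|\,e^{4\tau c}\|g\|_{H^2}$; the term coming from the operator difference gives $\tau^{-1}\cdot\tau^{-1/2}\cdot\tau\cdot\|g\|_{H^2}=\tau^{-1/2}\|g\|_{H^2}$, and the last boundary–boundary term gives $\tau^{-1}\|g\|_{H^2}\le\tau^{-1/2}\|g\|_{H^2}$. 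Using $\|g\|_{H^2(\Omega)}=\|\overline g\|_{H^2(\Omega)}$, the resulting sum is exactly the right-hand side of (\ref{dnmap}), completing the argument.
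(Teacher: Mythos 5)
Your proposal is correct and follows essentially the same route as the paper: it plugs the CGO solutions of Theorems \ref{Zs} and \ref{DSFKSjUs} into Proposition \ref{leftsideAI}, establishes the same four bounds ($\left \| u_i \right \|_{H^1(\Omega)}\lesssim \tau e^{\tau c}$ up to the factor $\left \| \overline{g} \right \|_{H^2(\Omega)}$, $O(1)$ weighted boundary norms via cancellation of the exponentials and the decay of $k$ on $\supp b$, and $O(\tau)$ for the weighted norm of $(\mathcal{L}_{A_1,q_1}-\mathcal{L}_{A_2,q_2})u_1$ because the difference is a first-order operator), and then multiplies by $\tau^{-1}$ and absorbs powers of $\tau$ using $\tau\leq e^{2\tau c}$. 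The point you flag as delicate is exactly the paper's computation (\ref{cuatroestimate}), and it goes through just as you describe.
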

\begin{proof} We start by computing the norms corresponding to $u_1$ in the right hand side of (\ref{aleinequality}). The estimates for $u_2$ are similar. By Theorem \ref{Zs}, the function $u_1$ has the form
\[
u_1= e^{\tau(\xi\cdot x + \zeta\cdot x)}\left( e^{\Phi_1} + r_1  \right)-e^{\tau l}b
\]
and there exist  two positive constants $C_1$ and $\tau_1$ such that the following estimate 
\begin{equation}\label{estar1}
 \left \| \partial^{\alpha} r_1 \right \|_{L^2(\Omega)}\leq C_1 \tau^{\left | \alpha \right |-1}, \quad \left | \alpha \right |\leq 1,
\end{equation}
holds true for all $\tau \geq \tau_1$. Also, we have the estimate
\[
 \left \| e^{\tau l } \right \|_{L^\infty(\Omega)} =  \left \| e^{\tau (\xi\cdot x -k(x)) } \right \|_{L^\infty(\Omega)} \leq  \left \| e^{\tau \xi\cdot x } \right \|_{L^\infty(\Omega)}\leq  e^{\tau c}.
\]
For convenience we denote 
\begin{equation}\label{aaaaa5}
a_1=e^{\Phi_1}, \varphi(x)=\xi\cdot x, \psi(x)=\zeta\cdot x
 \end{equation}
and since $\re l(x)=\xi\cdot x -k(x)$ and $\Im l(x)= \zeta\cdot x +\widetilde{k}(x)$, the above estimates and (\ref{bou}) imply that
\begin{equation}\label{ceroestimate}
\begin{aligned}
& \left \| u_1  \right \|_{H^{1}(\Omega)} = \left \| u_1  \right \|_{L^{2}(\Omega)} + \left \| \nabla u_1  \right \|_{L^{2}(\Omega)}\\
& = \left \| e^{\tau(\varphi + i\psi)} (a_1+r_1) -e^{\tau l}b \right \|_{L^{2}(\Omega)}  \\
&  \qquad+ \left \|  \tau \nabla (\varphi + i\psi)e^{\tau (\varphi + i\psi)} (a_1+r_1)  \right \|_{L^2(\Omega)}\\
&\qquad +  \left \| e^{\tau (\varphi + i\psi)}(\nabla a_1 + \nabla r_1)  -\nabla(e^{\tau l}b)  \right \|_{L^2(\Omega)}\\
& \leq  \left \| e^{\tau(\varphi + i\psi)} (a_1+r_1)  \right \|_{L^{2}(\Omega)}   +  \left \| e^{\tau l}b \right \|_{L^{2}(\Omega)}  \\
&\qquad + \left \| \tau b e^{\tau l}  \nabla l   + e^{\tau l} \nabla b \right \|_{L^{2}(\Omega)}   \\
& \qquad+  \left \| \tau \nabla (\varphi + i\psi)e^{\tau (\varphi + i\psi)} (a_1+r_1) + e^{\tau (\varphi + i\psi)}(\nabla a_1 + \nabla r_1)   \right \|_{L^{2}(\Omega)} \\
& \leq C_1\left \| e^{\tau\varphi} \right \|_{L^\infty(\Omega)}  \left \| a_1+r_1 \right \|_{L^2(\Omega)}  +\left \| e^{\tau l} \right \|_{L^\infty(\Omega)}  \left \| b \right \|_{L^{2}(\Omega)} \\
&\qquad  +\tau \left \| e^{\tau l} \right \|_{L^\infty(\Omega)}  \left \|  b \right \|_{H^{1}(\Omega)} +C_1 \tau \left \| e^{\tau\varphi} \right \|_{L^\infty(\Omega)}  \left \| a_1+r_1 \right \|_{L^2(\Omega)}  \\
&\qquad    +C_1  \left \| e^{\tau\varphi} \right \|_{L^\infty(\Omega)}  \left \| \nabla( a_1+r_1) \right \|_{L^2(\Omega)}    \\
& \leq C_2 \tau e^{\tau c}  \left \|  a_1+r_1\right \|_{H^1(\Omega)}   + C_2\tau e^{\tau c} \left \| b\right \|_{H^1(\Omega)}  \leq C_3 \tau e^{\tau c}. 
\end{aligned}
\end{equation}
We continue in this fashion to compute
\begin{equation}\label{dosestimate}
\begin{aligned}
 \left \| e^{-\tau \varphi}u_1  \right \|_{L^{2}(\partial\Omega)}& =  \left \| e^{-i\tau \psi }(a_1+r_1) + e^{-\tau \varphi}e^{\tau l} b  \right \|_{L^{2}(\partial\Omega)}  \\
 & \leq   \left \| a_1+r_1  \right \|_{L^{2}(\partial\Omega)} +   \left \| e^{-\tau k(x)} b  \right \|_{L^{2}(\partial\Omega)}  \\
 &\leq    \left \|a_1+r_1  \right \|_{H^1(\Omega)} +  \left \|b  \right \|_{H^1(\Omega)} \leq C_4.
 \end{aligned} 
\end{equation}
Finally by denoting $V=a_1+r_1+ e^{-\tau(\varphi+i\psi)} e^{\tau l} b$,  we get
\begin{equation}\label{cuatroestimate}
\begin{aligned}
& \left \| e^{-\tau(\varphi+i\psi)}(\mathcal{L}_{A_1,q_1} - \mathcal{L}_{A_2,q_2}) u_1 \right \|_{L^2(\Omega)} \\
& =  \left \| e^{-\tau(\varphi+i\psi)}  (\mathcal{L}_{A_1,q_1} - \mathcal{L}_{A_2,q_2}) \left[ e^{\tau(\varphi+i\psi)}  V \right] \right \|_{L^2(\Omega)} \\
& \leq   \left \|  (A_1-A_2)\cdot \left[ \tau (\nabla \varphi + i\nabla\psi)V\right]  \right \|_{L^2(\Omega)} \\
& \qquad +  \left \|  (A_1-A_2)\cdot  \nabla V  \right \|_{L^2(\Omega)}\\
& \qquad \qquad  + \left \|  \nabla \cdot (A_1-A_2)V\right \|_{L^2(\Omega)}\\
& \qquad \qquad \qquad + \left \| (A_1^2-A_2^2+q_1-q_2)V \right \|_{L^2(\Omega)} \\
& \leq C_6 \left(   \tau \left \| a_1+r_1+b \right \|_{L^2(\Omega)} + \left \| \nabla(a_1+r_1+b) \right \|_{L^2(\Omega)}\right) \\
&  \leq  C_6\tau  \left \| a_1+r_1+b \right \|_{H^1(\Omega)}\leq C_7\tau.
\end{aligned}
\end{equation}
On the other hand, by Theorem \ref{DSFKSjUs} the function $u_2$ has the form
\[
u_2= e^{-\tau(\xi\cdot x - i\zeta\cdot x)}\left( e^{\Phi_2}g + r_2  \right)
\]
and there exist  two positive constants $C_2$ and $\tau_2$ such that the following estimate 
\begin{equation}\label{estar12}
 \left \| \partial^{\alpha} r_2 \right \|_{L^2(\Omega)}\leq C_2 \tau^{\left | \alpha \right |-1} \left \| \overline{g}  \right \|_{H^{2}(\Omega)}, \quad \left | \alpha \right |\leq 1,
\end{equation}
holds true for all $\tau \geq \tau_2$. The above inequality and analogous arguments as used for the boundedness of $u_1$, gives us the following estimates for $u_2$
\begin{equation}\label{unoestimate}
 \left \| u_2  \right \|_{H^{1}(\Omega)}  \leq  C_4\tau e^{\tau c} \left \| \overline{g}  \right \|_{H^{2}(\Omega)}
 \end{equation}
and 
\begin{equation}\label{tresestimate}
 \left \| e^{\tau \xi\cdot x}u_2  \right \|_{L^{2}(\partial\Omega)} \leq C_5\left \| \overline{g}  \right \|_{H^{2}(\Omega)}.
\end{equation}
Thus combining the estimates (\ref{ceroestimate})-(\ref{cuatroestimate}) into (\ref{aleinequality}), and taking into account that  there exists $\tau_3>0$ such that $\tau \leq e^{2\tau c}$, for all $\tau \geq \tau_3$, we get
\begin{align*}
&\left |  \left \langle (\Lambda_1- \Lambda_2)u_1, u_2  \right \rangle_{L^2(\partial\Omega)} \right |\\
& \leq  C\left \| \Lambda_1^{\sharp} - \Lambda_2^{\sharp}  \right \|\left(\tau^2e^{2\tau c} + \tau e^{2\tau c}   \right) \left \| \overline{g} \right \|_{H^2(\Omega)}\\
& \qquad \qquad +C  \left(\tau^{1/2} + 1 \right) \left \| \overline{g} \right \|_{H^2(\Omega)}\\
& \leq C\left(  \tau e^{4\tau c}  \left \| \Lambda_1^{\sharp} - \Lambda_2^{\sharp}  \right \|+ \tau^{1/2} \right) \left \| \overline{g} \right \|_{H^2(\Omega)}.
\end{align*}
We conclude the proof multiplying  by $\tau^{-1}$ both sides of the previous inequality and taking $\tau_0= \max (\tau_1, \tau_2,\tau_3)$.
\end{proof}

Corollary \ref{righthandside} gives us an estimate for the left hand side of the identity (\ref{al}). The task now is to estimate the right hand side multiplied by $\tau^{-1}$, that is to estimate the expression 
\[
\int_{\Omega} \left[ (A_1-A_2)\cdot(\tau^{-1}Du_1 \overline{u}_2 +\tau^{-1} u_1 \overline{Du}_2) + \tau^{-1}(A_1^2-A_2^2+q_1-q_2)u_1\overline{u}_2\right] dx.
\]

For convenience we denote $\rho(x)=(\xi+i\zeta)\cdot x $, $a_1=e^{\Phi_1}$, $u_r=e^{\tau(-\rho+l)}b$ and $a_2=e^{\Phi_2}g$. Hence $u_1$ and $u_2$ have the form (see the Theorems \ref{Zs} and \ref{DSFKSjUs})
\begin{equation}\label{denoted}
u_1= e^{\tau\rho}\left( a_1+r_1-u_r \right)\; , \; u_2= e^{-\tau\overline{\rho}}(a_2+r_2),
\end{equation}
an easy computation shows that

\begin{equation}\label{a4}
\begin{aligned}
 \tau^{-1}Du_1\overline{u}_2&=\left [ e^{\tau\rho} \left( D\rho (a_1+r_1-u_r) + \tau^{-1}D(a_1+r_1-u_r)\right)\right ] \\
 &\qquad \qquad   \times \left [ e^{-\tau\rho} (\overline{a}_2 + \overline{r}_2) \right ]\\
& = D\rho a_1\overline{a}_2 + M_1
\end{aligned}
\end{equation}
and
\begin{equation}\label{a44}
\begin{aligned}
\tau^{-1}u_1\overline{Du}_2&=\left [ e^{\tau\rho}(a_1+r_1-u_r)  \right ] \\
& \qquad\qquad  \times \left [ e^{-\tau\rho} \left(  D\rho(\overline{a}_2+ \overline{r}_2)  +\tau^{-1} \overline{D}(\overline{a}_2+ \overline{r}_2)\right) \right ]  \\
& = D\rho a_1\overline{a}_2 + M_2,
\end{aligned}
\end{equation}
where
\begin{align*}
M_1&= D\rho r_1\overline{a}_2 + \tau^{-1}Da_1(\overline{a}_2+ \overline{r}_2) + \tau^{-1}Dr_1(\overline{a}_2+ \overline{r}_2 )+D\rho( a_1+r_1)\overline{r}_2  \\
&\qquad    -\tau^{-1}e^{-\tau \rho}Du_r( \overline{a}_2+ \overline{r}_2) 
\end{align*}
and
\begin{align*}
M_2 &=D\rho (a_1+r_1) \overline{r}_2+\tau^{-1} a_1(\overline{Da}_2+\overline{Dr}_2)+ D\rho r_1\overline{a}_2  +\tau^{-1} r_1 (\overline{Da}_2 + \overline{Dr}_2   )\\
&-e^{-\tau\rho} u_rD\rho(\overline{a}_2+ \overline{r}_2)+\tau^{-1} e^{-\tau\rho} u_r(\overline{Da}_2+\overline{Dr}_2).
\end{align*}

Now from (\ref{kapa}) we obtain the following estimates
\begin{equation}\label{estar123}
\left \| e^{-\tau\rho}u_r  \right \|_{L^2(\Omega)} \leq C_1 \tau^{-1}, \quad \left \| e^{-\tau\rho}Du_r  \right \|_{L^2(\Omega)} \leq C_1,
\end{equation}
and by a straightforward computation and similar analysis as in the proof of Corollary \ref{righthandside}, there exist two positive constants $C_2$ and $\tau_2$ such that
\begin{equation}\label{a5}
\left \| M_j  \right \|_{L^2(\Omega)} \leq C_2 \tau^{-1} \left \| \overline{g} \right \|_{H^2(\Omega)}, \quad  j=1,2,
\end{equation}
holds true for all $\tau\geq \tau_2$. Thus, Alessandrini's identity,  (\ref{dnmap}) and (\ref{a5}) imply that there exist two positive constants $C_6$ and $\tau_1$ such that the estimate

\begin{equation}
\begin{aligned} 
&2 \int_{\Omega}(A_1-A_2)\cdot D\rho a_1 \overline{a_2} dx \\
&= \tau^{-1} \int_{\Omega} (A_1-A_2)\cdot(Du_1 \overline{u}_2 + u_1 \overline{Du}_2) + (A_1^2-A_2^2+q_1-q_2)u_1\overline{u}_2\\
& \qquad -\int_{\Omega}(A_1-A_2)\cdot (M_1+ M_2) -\tau^{-1} \int_{\Omega}(A_2^2-A_1^2+q_2-q_1)u_1\overline{u_2}\\
& \leq \tau^{-1} \left |    \left \langle (\Lambda_1- \Lambda_2)u_1, u_2  \right \rangle_{L^2(\partial\Omega)}      \right |  + C_3 \left \| M_1+M_2 \right \|_{L^2(\Omega)} \\
&\qquad  + C_4 \tau^{-1} \left \| e^{-\tau \varphi}u_1 \right \|_{L^2(\Omega)}   \left \| e^{\tau \varphi}u_2 \right \|_{L^2(\Omega)}\\
& \leq C_5 \left(  e^{4\tau c}\left \| \Lambda_1^\sharp- \Lambda_2^\sharp \right \|+\tau^{-\frac{1}{2}}+\tau^{-1} + \tau^{-1} \right)  \left \|\overline{g} \right \|_{H^2(\Omega)}    \\
& \leq C_6  \left(  e^{4\tau c}\left \| \Lambda_1^\sharp- \Lambda_2^\sharp \right \|+\tau^{-\frac{1}{2}}  \right)  \left \|\overline{g} \right \|_{H^2(\Omega)},
\end{aligned}
\end{equation}
holds true for all $\tau\geq \tau_1$. Hence we have
\begin{equation}\label{mapone}
\begin{aligned}
& \left | (\xi +i\zeta )\cdot \int_{\Omega}(A_1-A_2) e^{\Phi_1 + \overline{\Phi}_2}g dx  \right | \\
& \qquad \qquad \qquad \leq C_6  \left(  e^{4\tau c}\left \| \Lambda_1^\sharp- \Lambda_2^\sharp \right \|+\tau^{-\frac{1}{2}}  \right)  \left \|\overline{g} \right \|_{H^2(\Omega)}
\end{aligned}
\end{equation}
for all $\tau\geq \tau_1$. Next, we use the last inequality to get information on the difference $A_1-A_2$. To do that we will use Lemma \ref{KyUh} in order to remove the function $e^{\Phi_1 + \overline{\Phi}_2}$. Before to state the lemma we have to introduce a new coordinates: every $x\in \mathbb{R}^n$ can be written as follows
\begin{equation}\label{xdf}
x= a \xi +b\zeta+ x^\prime, \quad a=\xi\cdot x, \, b= \zeta\cdot x.
\end{equation}
Thus we consider the coordinates in $\mathbb{R}^n$,  $x \mapsto (a,b,x^\prime)$.
\begin{lem}\label{KyUh}
Let $ \xi, \zeta, \varsigma\in \mathbb{R}^n$ ($n\geq 3$) be orthogonal vectors such that $\left | \xi \right |=\left |\zeta\right |=1$. Consider the coordinates in $\mathbb{R}^n$  given by (\ref{xdf}). If $W\in (L^{\infty}\cap \mathcal{E}^\prime  )(\mathbb{R}^n; \mathbb{C}^n)$ and  $\Phi$ satisfies 
\[
( \xi + i\zeta)\cdot \nabla \Phi +  ( \xi + i\zeta)\cdot W=0
\]
in $\mathbb{R}^n$ then 
\[
( \xi + i\zeta)\cdot \int_{\mathbb{R}^n} W(x)e^{i\varsigma \cdot x} e^{\Phi(x)}g(x)dx= ( \xi + i\zeta)\cdot \int_{\mathbb{R}^n} W(x)e^{i\varsigma \cdot x}g(x) dx,
\]
for all smooth function $g$ depending only on $x^\prime$, that is $g(x)=g(x^\prime)$.  
\end{lem}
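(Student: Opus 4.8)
The plan is to use that, in the coordinates (\ref{xdf}), the operator $(\xi+i\zeta)\cdot\nabla$ is twice a Cauchy--Riemann operator in the pair $(a,b)$, while all the remaining factors in the integrand lie in its kernel. Set $z=a+ib$ and $P:=(\xi+i\zeta)\cdot\nabla$; since $\xi\cdot\nabla=\partial_a$ and $\zeta\cdot\nabla=\partial_b$ in these coordinates, $P=\partial_a+i\partial_b=2\partial_{\bar z}$. Because $\varsigma$ is orthogonal to $\xi$ and $\zeta$ we have $\varsigma\cdot x=\varsigma\cdot x'$, so, together with $g=g(x')$, the function $h(x):=e^{i\varsigma\cdot x}g(x)$ depends only on $x'$; consequently $Ph=(\partial_a+i\partial_b)h=0$. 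The asserted identity is equivalent to showing that $I:=(\xi+i\zeta)\cdot\int_{\mathbb{R}^n}W\,e^{i\varsigma\cdot x}(e^{\Phi}-1)g\,dx$ vanishes, and inserting the relation $(\xi+i\zeta)\cdot W=-P\Phi$ from the hypothesis on $\Phi$ turns this into
\[
I=-\int_{\mathbb{R}^n}(P\Phi)\,(e^{\Phi}-1)\,h\,dx .
\]

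First I would record the algebraic identity $(P\Phi)(e^{\Phi}-1)=P\Psi$, where $\Psi:=e^{\Phi}-1-\Phi$, valid because $P(e^{\Phi})=e^{\Phi}P\Phi$ and $P(1)=0$. The gain is that $P\Psi=(e^{\Phi}-1)\,P\Phi=-(e^{\Phi}-1)\,(\xi+i\zeta)\cdot W$ is supported in $\supp W$, a fixed compact set; thus $I=-\int_{\mathbb{R}^n}(P\Psi)\,h\,dx$ is a genuinely convergent integral over a compact region, and no hypothesis at infinity is needed to make sense of it.

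Next, since $h$ is independent of $(a,b)$, I would freeze $x'$ and reduce to the planar integral $\int_{\mathbb{R}^2}(P\Psi)(a,b,x')\,da\,db$, which I claim is zero. The relevant $\Phi$ is the one furnished by the construction of Remark~\ref{infi}, i.e.\ the Cauchy transform in $z$ of the compactly supported datum $-\tfrac12(\xi+i\zeta)\cdot W$; it satisfies $\Phi(z,x')=O(|z|^{-1})$ as $|z|\to\infty$, and hence $\Psi=e^{\Phi}-1-\Phi=O(|\Phi|^{2})=O(|z|^{-2})$. Applying the divergence theorem on the disk $D_R=\{|z|\le R\}$ gives $\int_{D_R}P\Psi\,da\,db=\oint_{\partial D_R}\Psi\,(\nu_a+i\nu_b)\,ds$; for $R$ large the left-hand side already equals the full planar integral (as $P\Psi$ is compactly supported), while the right-hand side is $O(R^{-2})\cdot O(R)=O(R^{-1})\to0$. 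Therefore $\int_{\mathbb{R}^2}P\Psi\,da\,db=0$ for every $x'$, and integrating the constant-in-$(a,b)$ factor $h(x')$ against this vanishing flux yields $I=0$.

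The crux, and the only delicate point, is the behaviour at infinity. Although the integrand $(P\Phi)(e^{\Phi}-1)h$ is compactly supported, the natural primitive $\Psi$ is not, so the circle integral at radius $R$ genuinely survives unless $\Psi$ decays faster than $|z|^{-1}$; subtracting both $1$ and $\Phi$ from $e^{\Phi}$ is precisely what promotes the $O(|z|^{-1})$ decay of $\Phi$ to the $O(|z|^{-2})$ decay of $\Psi$ needed for the boundary term to vanish. A routine technical point, handled by mollifying $W$ and passing to the limit with the help of the uniform bound $\left\|\Phi\right\|_{L^\infty}\le C\left\|W\right\|_{L^\infty}$ of Remark~\ref{infi}, is to guarantee enough regularity of $\Psi$ to apply the divergence theorem.
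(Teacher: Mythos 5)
Your proof is correct and takes essentially the same route as the paper, which in fact writes out no proof of Lemma \ref{KyUh} but defers to Proposition 3.3 of \cite{KU} (see also Lemma 2.6 of \cite{Tz}): that cited argument is precisely the one you reconstruct, namely rewriting $(e^{\Phi}-1)\,(\xi+i\zeta)\cdot\nabla\Phi$ as $(\xi+i\zeta)\cdot\nabla\bigl(e^{\Phi}-\Phi-1\bigr)$, exploiting that this is compactly supported as a source while $h=e^{i\varsigma\cdot x}g(x^\prime)$ is annihilated by $(\xi+i\zeta)\cdot\nabla$, and killing the expanding-disk boundary term through the $O(|z|^{-1})$ decay of the Cauchy-transform solution $\Phi$, which the subtraction of $1+\Phi$ upgrades to $O(|z|^{-2})$. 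Your observation that the lemma implicitly concerns the particular decaying solution furnished by the construction behind Remark \ref{infi} (and would fail for, say, $\Phi$ plus a nonzero constant) is consistent with the paper's intended reading.
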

\begin{rem}
The proof of this lemma for the case $g\equiv 1$ was given in \cite{KU}. See also Lemma $2.6$ in \cite{Tz}. The proof for any $g$ depending only on $x^\prime$ is similar to the proof of Proposition $3.3$ in \cite{KU}.
\end{rem}

\begin{prop}\label{RTRA}
Let $\Omega\subset \mathbb{R}^n$ be a bounded open set with smooth boundary. Let $\xi \in N\subset S^{n-1}$ and $\zeta \in S^{n-1}$ such that $ \xi \cdot \zeta=0$. Let $M>0$ and consider $A_1, A_2\in \mathscr{A}(\Omega, M)$ and $q_1, q_2\in L^\infty(\Omega)$. If $A_1=A_2$ on $\partial\Omega$ then there exist two positive constants $\tau_0$ and $C>0$ (both depending on $n, \Omega$, M) such that
\begin{equation}\label{Remove}
\left |\mu\cdot \int_{\Omega}(A_1-A_2)gdx  \right |\leq C \left | \mu \right |  \left |\log \left \| \Lambda_1^{\sharp}- \Lambda_2^{\sharp} \right \|  \right |^{-1/2} \left \| \overline{g} \right \|_{H^2(\Omega)}
\end{equation}
holds true for all $\mu \in span\left \{\xi, \zeta\right \}$, provided that  $ \left \|  \Lambda_1^{\sharp}-\Lambda_2^{\sharp} \right \| \leq e^{-4c\tau_0}$.
\end{prop}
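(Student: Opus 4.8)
The plan is to take the already-derived inequality (\ref{mapone}) as a black box and convert it into a genuine statement about $\int_\Omega(A_1-A_2)g$, free of the amplitude factor $e^{\Phi_1+\overline\Phi_2}$, and then to optimize the free parameter $\tau$. First I would exploit the boundary hypothesis $A_1=A_2$ on $\partial\Omega$ to extend $W:=i(A_1-A_2)$ by zero to all of $\mathbb{R}^n$; this produces $W\in(L^\infty\cap\mathcal{E}')(\mathbb{R}^n;\mathbb{C}^n)$ and turns every integral over $\Omega$ into an integral over $\mathbb{R}^n$. Adding (\ref{Phiuno}) and (\ref{Phidos}) shows that $\Phi:=\Phi_1+\overline\Phi_2$ solves $(\xi+i\zeta)\cdot\nabla\Phi+(\xi+i\zeta)\cdot W=0$, which is exactly the hypothesis of Lemma \ref{KyUh}. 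Applying that lemma with $\varsigma=0$ and with $g=g(x')$ depending only on the transversal variables, and cancelling the scalar factor $i$, yields
\[
(\xi+i\zeta)\cdot\int_\Omega(A_1-A_2)e^{\Phi_1+\overline\Phi_2}g\,dx=(\xi+i\zeta)\cdot\int_\Omega(A_1-A_2)g\,dx .
\]
Substituting this identity into (\ref{mapone}) gives
\[
\Bigl|(\xi+i\zeta)\cdot\int_\Omega(A_1-A_2)g\,dx\Bigr|\le C\bigl(e^{4\tau c}\|\Lambda_1^\sharp-\Lambda_2^\sharp\|+\tau^{-1/2}\bigr)\|\overline g\|_{H^2(\Omega)} .
\]

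Next I would optimize in $\tau$. Writing $\delta=\|\Lambda_1^\sharp-\Lambda_2^\sharp\|$ and choosing $\tau=(8c)^{-1}|\log\delta|$, one has $e^{4\tau c}\delta=\delta^{1/2}$ and $\tau^{-1/2}=\sqrt{8c}\,|\log\delta|^{-1/2}$; since $\delta^{1/2}\le|\log\delta|^{-1/2}$ once $\delta$ is small (equivalently $\delta|\log\delta|\le1$), both competing terms are controlled by $|\log\delta|^{-1/2}$, so
\[
\Bigl|(\xi+i\zeta)\cdot\int_\Omega(A_1-A_2)g\,dx\Bigr|\le C\,|\log\delta|^{-1/2}\,\|\overline g\|_{H^2(\Omega)} .
\]
The smallness hypothesis $\delta\le e^{-4c\tau_0}$ is precisely what guarantees both that this choice of $\tau$ lies above the threshold required for (\ref{mapone}) (enlarging $\tau_0$ by a fixed factor if necessary) and that the comparison $\delta^{1/2}\le|\log\delta|^{-1/2}$ is in force.

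Finally I would pass from the complex covector $\xi+i\zeta$ to an arbitrary real direction $\mu\in\mathrm{span}\{\xi,\zeta\}$. Taking $g$ real-valued, the vector $I:=\int_\Omega(A_1-A_2)g\,dx$ is real, so $\xi\cdot I$ and $\zeta\cdot I$ are the real and imaginary parts of $(\xi+i\zeta)\cdot I$, whence $(\xi\cdot I)^2+(\zeta\cdot I)^2=|(\xi+i\zeta)\cdot I|^2$. Writing $\mu=\alpha\xi+\beta\zeta$ and applying Cauchy--Schwarz gives $|\mu\cdot I|\le|\mu|\,|(\xi+i\zeta)\cdot I|$, which combined with the previous display is exactly (\ref{Remove}).

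The hard part is the amplitude-removal step. The delicate points are that the extension of $A_1-A_2$ by zero is legitimate only because of the matching $A_1=A_2$ on $\partial\Omega$ (so the extension stays in $L^\infty\cap\mathcal{E}'$ and the integral is genuinely over $\mathbb{R}^n$), and that $\Phi_1+\overline\Phi_2$ is an admissible weight for Lemma \ref{KyUh}; verifying the transport equation for this sum and reconciling the amplitude produced by the CGO constructions of Theorems \ref{Zs} and \ref{DSFKSjUs} with a global solution in the sense of Remark \ref{infi} is where the care is needed. By contrast, the $\tau$-balancing and the complex-to-real reduction are routine.
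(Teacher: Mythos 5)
Your proposal is correct, and its two core steps coincide with the paper's own proof: you remove the amplitude $e^{\Phi_1+\overline{\Phi}_2}$ exactly as the paper does, by adding (\ref{Phiuno}) and (\ref{Phidos}), extending $i(A_1-A_2)$ by zero (legitimate because $A_1=A_2$ on $\partial\Omega$) and invoking Lemma \ref{KyUh} with $\varsigma=0$ and $g=g(x^\prime)$; and your choice $\tau=(8c)^{-1}\left|\log\delta\right|$, $\delta=\left\|\Lambda_1^\sharp-\Lambda_2^\sharp\right\|$, together with the comparison $\delta^{1/2}\leq\left|\log\delta\right|^{-1/2}$ and the enlargement of $\tau_0$, is the same balancing the paper performs via (\ref{sucesion}) to reach (\ref{xipluszeta}). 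You also correctly flag the genuinely delicate point, namely that $\Phi_1+\overline{\Phi}_2$ must solve the transport equation globally with the zero-extended $W$, which the paper treats with the same brevity.

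The one place you depart from the paper is the passage from the single complex direction $\xi+i\zeta$ to all $\mu\in\mathrm{span}\{\xi,\zeta\}$. The paper reruns the entire argument with $\zeta$ replaced by $-\zeta$ (via Remark \ref{minuszeta}) to obtain the companion estimate (\ref{ximinuszeta}) for $(\xi-i\zeta)\cdot\int_\Omega(A_1-A_2)g\,dx$, and then expresses $\mu$ as a combination of $\xi\pm i\zeta$; this works for complex-valued $g$ with no further comment. You instead use that $A_1,A_2$ are real vector fields, so for real-valued $g$ the vector $I=\int_\Omega(A_1-A_2)g\,dx$ is real, $\xi\cdot I$ and $\zeta\cdot I$ are the real and imaginary parts of $(\xi+i\zeta)\cdot I$, and Cauchy--Schwarz gives $\left|\mu\cdot I\right|\leq\left|\mu\right|\left|(\xi+i\zeta)\cdot I\right|$. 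This is a small but genuine simplification: it needs only one run of the CGO/Carleman machinery instead of two. Its only cost is the reality restriction on $g$; since the proposition is later applied with test functions that may be complex, you should add the one-line remark that the general case follows by splitting $g$ into real and imaginary parts (each still depends only on $x^\prime$, hence satisfies (\ref{guno1})), at the price of a factor $2$ in the constant. With that remark, your proof of (\ref{Remove}) is complete.
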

\begin{proof}
We first prove the Proposition for $\mu=\xi +i\zeta$. The equations (\ref{Phiuno}), (\ref{Phidos}) and  (\ref{guno1}) imply that 
\[
(\xi +i\zeta)\cdot \nabla (\Phi_1+\overline{\Phi}_2) +i (\xi +i\zeta)\cdot \left [ \chi_{\Omega}(A_1-A_2) \right ]=0
\]
in $\Omega$. Notice that the above equation could be extended to all $\mathbb{R}^n$ by considering $A_1-A_2=0$ on $\mathbb{R}^n\setminus \Omega$, since $A_1=A_2$ on $\partial\Omega$. Then applying Lemma \ref{KyUh} with $\varsigma=0$, $W=i\chi_{\Omega}(A_1-A_2)$, $\Phi=\Phi_1+\overline{\Phi}_2 $ and a function $g$ depending only on $x^\prime$(notice that such function $g$ satisfies (\ref{guno1})), we obtain
\begin{equation}\label{L}
\begin{aligned}
&  ( \xi + i\zeta)\cdot \int_{\Omega} (A_1-A_2)ge^{\Phi_1+\overline{\Phi}_2}dx\\
&\qquad = ( \xi + i\zeta)\cdot \int_{\mathbb{R}^n} \chi_{\Omega}(A_1-A_2)ge^{\Phi_1+\overline{\Phi}_2}dx\\
&\qquad =  ( \xi + i\zeta)\cdot \int_{\mathbb{R}^n}\chi_{\Omega}(A_1-A_2)g dx\\
&\qquad =  ( \xi + i\zeta)\cdot \int_{\Omega}(A_1-A_2)g dx.
\end{aligned}
\end{equation}
On the other hand, there exists $\tau_2>0$ such that 
\begin{equation}\label{sucesion}
e^{-2\tau c}\leq \tau^{-1/2},
\end{equation}
for all $\tau \geq \tau_2$. Let $\tau_1>0$ be such that (\ref{mapone}) is satisfied. Taking $\tau_0=\max(\tau_1, \tau_2)$, it is easy to check that
\[
\tau:=   \dfrac{1}{8} c^{-1} \left | \log \left \| \Lambda_1^\sharp- \Lambda_2^\sharp \right \| \right | \geq \tau_0 ,
\]
whenever
\[
 \left \|  \Lambda_1^{\sharp}-\Lambda_2^{\sharp} \right \| \leq e^{-4c\tau_0}.
\]
Thus, from (\ref{L}) and replacing the above inequalities into (\ref{mapone}), we get
\begin{equation}\label{xipluszeta}
\left |(\xi + i\zeta)\cdot \int_{\Omega}(A_1-A_2)gdx  \right |\leq C_1  \left |\log \left \| \Lambda_1^{\sharp}- \Lambda_2^{\sharp} \right \|  \right |^{-1/2} \left \| \overline{g} \right \|_{H^2(\Omega)}.
\end{equation}
By Remark \ref{minuszeta}, we can apply the previous arguments again, with $(\xi + i\zeta)$ replaced by $(\xi - i\zeta)$, to obtain
\begin{equation}\label{ximinuszeta}
\left |(\xi - i\zeta)\cdot \int_{\Omega}(A_1-A_2)gdx  \right |\leq C_2  \left |\log \left \| \Lambda_1^{\sharp}- \Lambda_2^{\sharp} \right \|  \right |^{-1/2} \left \| \overline{g} \right \|_{H^2(\Omega)}.
\end{equation}
Combining (\ref{xipluszeta}) and (\ref{ximinuszeta}) we conclude the proof.

\end{proof}


\subsection{Radon transform and its applications}
Let $f$ be a function on $\mathbb{R}^n$, integrable on each hyperplane in $\mathbb{R}^n$. These hyperplanes can be parametrized by its unit normal vector and distance to the origen: $\theta$ and $s$, respectively. Thus we set
\[
H(s, \theta)= \left \{ x\in \mathbb{R}^n \; : \;  \left \langle x, \theta  \right \rangle =s \right \}
\]
and in this setting the Radon transform of $f$ is defined by
\[
(\pmb{R}f)(s,\theta )=\int_H f(x)d\mu_H= \underset{\theta^{\perp}}{\int} f(s\theta +y) dy,
\]
whenever the integral exists. Here $\theta^{\perp}$ denotes the set of orthogonal vectors to $\theta$. This is the definition of the Radon transform with respect to the origin, but later we will have to know this transform at some arbitrary point in $\mathbb{R}^n$. In this case the natural definition is as follows. For $y_0\in \mathbb{R}^n$,  we set
\[
H_{y_0}= \left \{ x \in \mathbb{R}^n \; : \;  \left \langle x-y_0, \theta  \right \rangle =s \right \}
\]
for some $\theta\in S^{n-1}$ and $s\in \mathbb{R}$. With respect to these parameters we define 
\[
\pmb{R}_{y_0}f (s,\theta)= \int_{H_{y_0}} f d\mu_{H_{y_0}}.
\]

It is easy to check that for all $y_0\in \mathbb{R}^n$, $\theta\in S^{n-1}$ and $s\in \mathbb{R}$, we have the following relation
\begin{equation}
\pmb{R}_{y_0}f (s,\theta) = (\pmb{R}f)(s + \left \langle y_0, \theta  \right \rangle,\theta ).
\end{equation}

We now define the Fourier transform with respect to the first  variable of a function $F:\mathbb{R}\times S^{n-1} \rightarrow \mathbb{R}$ by

\[
\widehat{F}( \sigma, \theta)= (2 \pi)^{-1/2} \underset{\mathbb{R}}{\int} e^{-is\sigma}F( s, \theta)ds.
\]

For $\alpha\geq 0$ we define the Sobolev space $H^{\alpha}( \mathbb{R}\times S^{n-1}) $ as the subspace of $L^2(\mathbb{R}\times S^{n-1})$ with the norm

\[
 \left \| F \right \|_{H^\alpha(\mathbb{R}\times S^{n-1})} = \left( \underset{S^{n-1}}{\int} \underset{\mathbb{R}}{\int} (1+\sigma^2)^\alpha  \left | \widehat{F}(\sigma, \theta) \right |^2 d\sigma d\theta     \right)^{1/2}.
\]

The following two results can be found in \cite{Na}:  for each $\alpha\geq 0$ there exist positive constants $c$ and $C_0$, both depending on $\alpha$ and $n$, such that

\begin{equation}\label{Rfequation}
 \left \| \pmb Rf \right \|_{H^{\alpha + (n-1)/2}(\mathbb{R}\times S^{n-1})} \leq C_0  \left \| f \right \|_{H^{\alpha}(\mathbb{R}^n)},
\end{equation}
whenever $f$ has a compact support. Moreover for all $f\in H^1(\mathbb{R}^n)$ with compact support, the following identity holds in the sense of the distributions in $C^{\infty}_0(\mathbb{R})$
\begin{equation}\label{properties}
\theta_i\dfrac{\partial}{\partial s}(\pmb{R}f)(\cdot, \theta)= \pmb{R}(\partial_{x_i}f)(\cdot, \theta),
\end{equation}
for any $\theta\in S^{n-1}$ and $i=1,2,\ldots,n$. Here $\theta_i$ denotes the $i$-th coordinate of $\theta$.
The next result was proved by Caro, Dos Santos Ferreira and Ruiz, see Theorem $2.5$ in \cite{CDSFR}. This gives a stability estimate for the Radon transform in a suitable space and will be the main tool to improving our stability result for both magnetic an electrical potentials. Before stating their result we introduce the set $X$ as the subspace of $L^{1}(\mathbb{R}^n)$ with the norm
\[
\left \| F \right \|_X = \int_{\mathbb{R}} (1+\left | s \right | )^n   \left \| \pmb{R}F(s, \cdot) \right \|_{L^1(S^{n-1})}ds.
\]
and recall the distance on the sphere: $d_{S^{n-1}}(x,y)=\arccos(\left \langle x,y \right \rangle)$.
\begin{thm}\label{cdsr}
Let $M\geq 1, \alpha>0$ and $\beta\in \left ( 0,1 \right )$. Given $y_0\in \mathbb{R}^n$ and  $\theta_0\in S^{n-1}$,  consider the set
\[
\Gamma= \left \{\theta \in S^{n-1}\; : \; d_{S^{n-1}}(\theta_0,\theta)< \arcsin \beta  \right \}
\]
and the domain of dependence of the Radon transform by
\[
E= \left \{ x\in \mathbb{R}^n \; : \; \left \langle  \theta, x-y_0 \right \rangle=s \; , \; s\in \left ( -\alpha, \alpha \right ) \; , \; \theta\in \Gamma  \right \}.
\]
Assume that there exist two constants $p$, with $1\leq p<\infty$ and $\lambda$, with $0<\lambda<p^{-1}$;  such that a function $F$ satisfies the following conditions:
\begin{item}
\item[(a).] $\chi_{E}F\in X \cap L^\infty(\mathbb{R}^n)$, where $\chi_E$ denotes the characteristic function of the set $E$. Moreover
\[
\left \| F \right \|_{L^\infty(E)} + \left \| \chi_E F \right \|_X \leq M.
\]
\item[(b).] $y_0\in \supp F$ and $\supp F \subset \left \{ x\in \mathbb{R}^n \; : \; \left \langle x-y_0, \theta_0 \right \rangle \leq 0 \right \} $. 
\item[(c).] The function $F$ satisfies the following $(\lambda, p)$-Besov regularity
\[
\int_{\mathbb{R}^n} \dfrac{\left \| \chi_E F(\cdot)-(\chi_E F)(\cdot-y)  \right \|_{L^p(\mathbb{R}^n)}^p}{\left | y \right |^{n+\lambda p}}dy\leq M^p.
\] 
\end{item}
Then there exists a positive constant $C$ (depending on $G, M, \alpha, \beta, \lambda$), such that
\[
\left \|F  \right \|_{L^p(G)} \leq C    \left |  \log \int_{-\alpha}^{\alpha}  (1+ \left | s \right |)^n \left \| \pmb{R}_{y_0} F(s, \cdot) \right \|_{L^1(\Gamma)} ds   \right |^{-\lambda/2},
\]
where
\begin{equation}\label{gii}
G= \left \{  x\in \mathbb{R}^n \; : \;  \left | x-y_0 \right |< \dfrac{\alpha}{8 \cosh(8\pi/\beta)}  \right \}.
\end{equation}
\end{thm}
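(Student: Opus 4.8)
The plan is to turn the statement into a quantitative analytic continuation estimate via the Fourier slice theorem and then to interpolate that estimate against the a priori Besov regularity in hypothesis (c); the competition between an exponentially unstable low-frequency reconstruction and a polynomially decaying high-frequency tail is exactly what forces a logarithmic modulus of continuity. Set
$\delta:=\int_{-\alpha}^{\alpha}(1+|s|)^n\|\pmb R_{y_0}F(s,\cdot)\|_{L^1(\Gamma)}\,ds$, so that the claim is $\|F\|_{L^p(G)}\le C|\log\delta|^{-\lambda/2}$. Since by (a)--(b) the function $F$ is bounded with compact support, for each direction $\theta$ the map $\sigma\mapsto\widehat F(\sigma\theta)$ extends to an entire function of exponential type (Paley--Wiener), and the Fourier slice theorem identifies its restriction to the real axis with $\widehat{\pmb RF}(\sigma,\theta)$ up to a constant. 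Thus $\delta$ controls, after a Fourier transform in $s$, the low-frequency behaviour of $\widehat F$ over the cone of directions $\Gamma$, and the whole argument is to propagate this localized smallness.

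First I would bring in the support and weight hypotheses. The bound $\|\chi_E F\|_X\le M$ in (a) gives $\int_{\R}(1+|s|)^n\|\pmb RF(s,\cdot)\|_{L^1}\,ds\lesssim M$ over the relevant cone, so the contribution of $|s|>\alpha$ to $\widehat{\pmb RF}(\sigma,\theta)$ is controlled uniformly by the weight $(1+|s|)^n$; meanwhile the one-sided condition in (b), $\supp F\subset\{\langle x-y_0,\theta_0\rangle\le0\}$ with $y_0\in\supp F$, fixes the geometry so that the window $s\in(-\alpha,\alpha)$, $\theta\in\Gamma$, really sees a neighbourhood of $y_0$. Using the shift identity $\pmb R_{y_0}F(s,\theta)=\pmb RF(s+\langle y_0,\theta\rangle,\theta)$ and truncating the $s$-integral at $|s|=\alpha$, I would deduce that $|\widehat{\pmb RF}(\sigma,\theta)|$, hence $|\widehat F(\sigma\theta)|$, is comparable to $\delta$ for $\theta\in\Gamma$ and $\sigma$ in a bounded frequency window, up to an error that decays in the window size.

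The core step, and the one I expect to be the main obstacle, is a propagation of smallness for the analytic functions $\theta\mapsto\widehat F(\sigma\theta)$ (equivalently $\sigma\mapsto\widehat F(\sigma\theta)$) from the measured sub-cone $\Gamma$ of angular size $\arcsin\beta$ out to a region large enough to reconstruct $F$ on the small ball $G$. Here I would complexify the angular variable parametrising $\Gamma\subset S^{n-1}$, realize the measured smallness as a bound on a boundary arc and the a priori $L^\infty$/$X$ bound from (a) as boundedness on a full disk, and invoke the two-constants theorem (logarithmic convexity / Hadamard three-circle). The harmonic measure of an arc of angular aperture governed by $\beta$ decays like $e^{-c/\beta}$, and after a conformal map to a strip this is precisely where the factor $\cosh(8\pi/\beta)$ appears; keeping only the region where that harmonic measure stays bounded below is what forces the radius $\alpha/(8\cosh(8\pi/\beta))$ in $G$ in (\ref{gii}) and yields a Hölder-type continuation estimate on $G$ of the form $\lesssim M^{1-\omega}\delta^{\omega}$. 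Making the geometric constants match (\ref{gii}) exactly, and passing from the per-direction one-variable estimate to a uniform estimate over the frequency ball $\{|\sigma|\le R\}$, is the delicate part.

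Finally I would optimise in the frequency scale $R$. The continuation step controls the part of $F$ assembled from frequencies $|\sigma|\le R$ on $G$ by roughly $e^{CR}\delta^{\omega}$ (the exponential loss reflecting the band-limited extrapolation, with $R$-dependence entering through the exponential type), while the Besov hypothesis (c), through the embedding of $(\lambda,p)$-Besov regularity into a polynomial frequency decay, bounds the complementary high-frequency tail in $L^p(G)$ by $CMR^{-\lambda}$; the smoothing relation (\ref{Rfequation}) and the differentiation rule (\ref{properties}) are the technical inputs that let one pass between $F$ and $\pmb RF$ at this stage. Balancing the two contributions by choosing $R$ proportional to $|\log\delta|$ makes the first term negligible and leaves $\|F\|_{L^p(G)}\lesssim|\log\delta|^{-\lambda/2}$, the exponent $\lambda/2$ being inherited from the Hölder exponent of the two-constants step combined with the Besov decay. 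Recalling the definition of $\delta$ then gives the asserted estimate.
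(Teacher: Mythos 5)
First, a point of comparison: the paper you are working from does not actually prove Theorem \ref{cdsr}; the statement is quoted verbatim from Caro, Dos Santos Ferreira and Ruiz (Theorem $2.5$ of \cite{CDSFR}) and is used here purely as a black box. So your proposal has to be measured against the proof in \cite{CDSFR}. In broad outline your strategy --- quantitative analytic continuation via a two-constants/harmonic-measure argument (which is indeed the kind of step that produces a constant like $\cosh(8\pi/\beta)$), played off against the a priori regularity through a low/high frequency splitting --- belongs to the right family of ideas and is in the spirit of that paper.

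However, your setup contains gaps that would make the argument fail at the first step. You assert that hypotheses (a)--(b) make $F$ ``bounded with compact support'', so that Paley--Wiener applies, $\sigma \mapsto \hat F(\sigma\theta)$ is entire of exponential type, and the global Fourier slice identity is available. This is false: (a) and (c) control only $\chi_E F$, the set $E$ is unbounded (it is a union of full hyperplanes), and (b) confines $\supp F$ merely to a half-space; outside $E$ the function $F$ is completely unconstrained apart from that half-space condition, so $\hat F$ need not be entire (or even defined), and $\widehat{\pmb{R}F}(\sigma,\theta)=c\,\hat F(\sigma\theta)$ cannot be invoked. Relatedly, your claim that the Fourier transform in $s$ of the measured data is ``comparable to $\delta$ \dots\ up to an error that decays in the window size'' is unjustified: the data is given only for $|s|<\alpha$, and the contribution of $|s|>\alpha$ to $\widehat{\pmb{R}_{y_0}F}(\sigma,\theta)$ is only $O(M)$ --- bounded, not small --- since the one-sided support condition kills the tail only on the side $s>0$, and (when $\supp F$ is unbounded) only for $\theta=\theta_0$ exactly. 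Handling this bounded-but-not-small tail is the actual heart of the matter: the statement is a quantitative \emph{local} Helgason support theorem, its conclusion holds only on the small ball $G$ near $y_0$, and the weight $(1+|s|)^n$ in the $X$-norm is essential (without such decay the support theorem itself fails, by classical counterexamples), whereas in your sketch that weight plays no substantive role; a global low-frequency reconstruction of $F$ of the kind you describe cannot work, because $F$ is not small away from $y_0$. Finally, the exponent $\lambda/2$ is asserted rather than derived: with a frequency-independent H\"older exponent $\omega$, balancing $e^{CR}\delta^{\omega}$ against $MR^{-\lambda}$ at $R\sim|\log\delta|$ would give $|\log\delta|^{-\lambda}$, while the complexification you describe actually produces an $\omega$ degrading exponentially in $R$ (the exponential type grows with the frequency), in which case the same balance only yields a $\log\log$ modulus; either way the bookkeeping does not close to the stated estimate.
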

\begin{rem}\label{albe}
In our context, the constant $\beta$ stands for size of the set $N\subset S^{n-1}$ and $\left(-\alpha, \alpha \right)$ is the interval where we have control of the Radon transform $\pmb{R}F(\cdot, \theta)$, with $\theta\in S^{n-1}$ and $F\in X$. Notice that for fixed $y_0\in \mathbb{R}^n$ and  $\beta>0$ we can take $\alpha$ large enough so that $\Omega\subset G$. We will use this facts in the proof of Theorem \ref{SMP}.
\end{rem}

\subsection{Proof of Theorem \ref{SMP}}
We start by rewriting the estimate from Proposition \ref{RTRA} in the natural coordinates of the Radon transform of $\chi_\Omega(A_1-A_2)$. More precisely 
\begin{cor} If we consider the open set in $S^{n-1}$
\begin{equation}\label{M}
M= \underset{\xi \in N}{\bigcup}\left [ \xi \right ]^{\perp},
\end{equation}
then for any $\widetilde{g}\in C^\infty(\mathbb{R})$ there exist two positive constants $C$ and $\tau_0$ (both depending on $n,\Omega$ and the a priori bounds of $\left \| A_j \right \|_{C^2(\overline{\Omega})}$ and $\left \| q_j \right \|_{L^\infty(\Omega)}$) such that the following estimate 
\begin{equation} \label{muuu3}
\begin{aligned}
&\left |  \mu \cdot \int_{\mathbb{R}}  \widetilde{g}(s) (\pmb{R}\left [  \chi_{\Omega}(A_1-A_2)\right ] )(s, \theta )ds \right |\\
&\qquad \qquad  \leq  C \left |  \mu \right | \left |\log \left \| \Lambda_1^{\sharp}- \Lambda_2^{\sharp} \right \|  \right |^{-1/2} \left \| \widetilde{g} \right \|_{H^2(\mathbb{R})},
\end{aligned}
\end{equation}
holds true for all $\theta \in M$ and $\mu \in \theta^\perp$.
\end{cor}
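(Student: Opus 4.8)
The plan is to deduce the statement directly from Proposition \ref{RTRA} by choosing the auxiliary weight $g$ to be a plane wave in the direction $\theta$ and then recognizing the resulting spatial integral as an integral of the Radon transform. The only real work is a geometric selection of the orthonormal pair entering Proposition \ref{RTRA}; the rest is a slicing identity and a routine Sobolev comparison.

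First I would fix $\theta\in M$ and $\mu\in\theta^\perp$ and produce an orthonormal pair $(\xi,\zeta)$ as required by Proposition \ref{RTRA}. Since $\theta\in M=\bigcup_{\xi\in N}[\xi]^\perp$, there is $\xi\in N$ with $\xi\perp\theta$. Writing $\mu=\mu_\xi\xi+\mu_\perp$ with $\mu_\perp\perp\xi$, the relations $\mu\perp\theta$ and $\xi\perp\theta$ force $\mu_\perp\perp\theta$; if $\mu_\perp\neq0$ I set $\zeta=\mu_\perp/|\mu_\perp|$, and otherwise I pick any unit vector $\zeta$ in $\{\xi,\theta\}^\perp$, which is nonempty because $n\geq3$. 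In either case $\xi,\zeta$ are orthonormal, both orthogonal to $\theta$, and $\mu\in\mathrm{span}\{\xi,\zeta\}$, so Proposition \ref{RTRA} applies with this $\mu$.

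Second, in the coordinates (\ref{xdf}) attached to this pair I set $g(x)=\widetilde{g}(\langle x,\theta\rangle)$. Because $\theta\perp\xi$ and $\theta\perp\zeta$ one has $\langle x,\theta\rangle=\langle x',\theta\rangle$, so $g$ depends only on $x'$ and is admissible in Proposition \ref{RTRA}. That proposition then yields
\[
\left|\mu\cdot\int_\Omega(A_1-A_2)\,g\,dx\right|\leq C|\mu|\,\big|\log\|\Lambda_1^\sharp-\Lambda_2^\sharp\|\big|^{-1/2}\|\overline{g}\|_{H^2(\Omega)}.
\]
To identify the left-hand side, I extend $A_1-A_2$ by zero outside $\Omega$ (legitimate since $A_1=A_2$ on $\partial\Omega$) and slice $\mathbb{R}^n=\mathbb{R}\theta\oplus\theta^\perp$ via $x=s\theta+y$; Fubini's theorem gives
\[
\int_\Omega(A_1-A_2)\,\widetilde{g}(\langle x,\theta\rangle)\,dx=\int_{\mathbb{R}}\widetilde{g}(s)\,\big(\pmb{R}[\chi_\Omega(A_1-A_2)]\big)(s,\theta)\,ds,
\]
which is exactly the quantity appearing in (\ref{muuu3}). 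Finally I would bound $\|\overline{g}\|_{H^2(\Omega)}\leq C(\Omega)\|\widetilde{g}\|_{H^2(\mathbb{R})}$: each derivative $\partial^\alpha g$ equals $\widetilde{g}^{(|\alpha|)}(\langle\cdot,\theta\rangle)$ times a factor bounded by $1$ (as $|\theta|=1$), and integrating $|\widetilde{g}^{(|\alpha|)}(\langle x,\theta\rangle)|^2$ over the bounded set $\Omega$ by the same slicing bounds it by $C(\Omega)\|\widetilde{g}^{(|\alpha|)}\|_{L^2(\mathbb{R})}^2$, since the $(n-1)$-measure of each slice $\Omega\cap H(s,\theta)$ is uniformly bounded. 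Combining the three displays establishes (\ref{muuu3}).

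The main obstacle is precisely the geometric selection in the first step. Proposition \ref{RTRA} controls $\mu$ only for $\mu\in\mathrm{span}\{\xi,\zeta\}$ and requires the weight $g$ to be constant in both the $\xi$ and $\zeta$ directions, whereas the corollary demands all $\mu\in\theta^\perp$ together with a weight built from $\theta$. Reconciling these forces $\xi$ and $\zeta$ to be chosen orthogonal to $\theta$ while still spanning $\mu$, which is possible exactly because $\dim\theta^\perp=n-1\geq2$ and because $\theta\in[\xi]^\perp$ for some $\xi\in N$ by definition of $M$. Once this choice is in place, the Radon-slicing identity and the plane-wave Sobolev estimate are entirely routine.
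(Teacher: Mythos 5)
Your proof is correct and follows essentially the same route as the paper: apply Proposition \ref{RTRA} with a weight depending only on $s=\langle x,\theta\rangle$ (a plane wave $g(x)=\widetilde g(\langle x,\theta\rangle)$), identify the resulting volume integral with the Radon transform pairing by Fubini slicing along $\mathbb{R}\theta\oplus\theta^\perp$, and compare Sobolev norms. You in fact make explicit two points the paper leaves implicit, namely the geometric selection of the orthonormal pair $(\xi,\zeta)$ from a given $(\theta,\mu)$ with $\theta\in M$, $\mu\in\theta^\perp$, and the bound $\left\| \overline g \right\|_{H^2(\Omega)}\leq C(\Omega)\left\| \widetilde g \right\|_{H^2(\mathbb{R})}$, both of which are needed to pass from the proposition to the stated corollary.
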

\begin{proof}
The main idea of the proof  is to see the left hand side of (\ref{Remove}) as the Radon transform of a suitable function. So consider $\xi\in N\subset S^{n-1}$ and $\zeta\in S^{n-1}$ such that $\xi\cdot \zeta=0$. We take some  $\theta \in \left [ \xi,\zeta \right ]^{\perp}$ with $\left | \theta \right |=1$. Thus, every $x\in \mathbb{R}^n$ can be written as
\[
x=t\xi + r\zeta + s\theta + x^\prime, \quad x^{\prime} \in \left [ \xi,\zeta, \theta \right ]^{\perp}.
\]
This decomposition can be done since $n\geq 3$. Now we consider the change of coordinates in $\mathbb{R}^n$ defined by $\Psi: x \mapsto (t,r,s, x^{\prime})$; and a straightforward computation shows that  if $g\in C^{\infty}(\mathbb{R}^n)$ satisfies $(\xi + i\zeta)\cdot \nabla g=0$, then the function $\widetilde{g}:= \overline{g}\circ \Psi^{-1} $ satisfies
\begin{equation}\label{wideg}
(\partial_t - i\partial_r)\widetilde{g}=0,
\end{equation}
where $\partial_t$ and $\partial_r$ denote the partial derivative with respect to $t$ and $r$, respectively. Notice that any function $\widetilde{g}:= \widetilde{g}(s)$  that depend only on the variable $s$, satisfies (\ref{wideg}). For $\Psi$-coordinates we have $dx=dx^{\prime} dtdrds$ and  for every  $\mu \in \left [  \xi, \zeta \right ]$, we obtain
\begin{align*}
&\mu\cdot \int_{\Omega}(A_1-A_2)gdx= \mu\cdot \int_{\mathbb{R}^n}\chi_{\Omega}(A_1-A_2)g dx\\
&= \mu\cdot \int_{\mathbb{R}^3}\int_{\left [ \xi,\zeta, \theta \right ]^{\perp}}\left [  \chi_{\Omega}(A_1-A_2)\circ \Psi^{-1}\right ] \left[ g\circ \Psi^{-1} dx^\prime \right]dtdrds\\
& = \mu\cdot \int_{\mathbb{R}} \widetilde{g}(s) \left( \int_{\mathbb{R}^2}\int_{\left [ \xi,\zeta, \theta \right ]^{\perp}}\left [  \chi_{\Omega}(A_1-A_2)\right ] (t\xi + r\zeta + s\theta + x^\prime)  dx^\prime dtdr \right)  ds\\
&  = \mu\cdot \int_{\mathbb{R}} \widetilde{g}(s) \left( \int_{\theta^{\perp}}\left [  \chi_{\Omega}(A_1-A_2)\right ] (s\theta + y)  dy \right)  ds\\
& = \mu \cdot \int_{\mathbb{R}}  \widetilde{g}(s) (\pmb{R}\left [  \chi_{\Omega}(A_1-A_2)\right ] )(s, \theta )ds.
\end{align*}
This equality and estimate (\ref{Remove}) imply (\ref{muuu3}). 
\end{proof}
In particular estimate (\ref{muuu3}) holds for the vectors $\mu_{ij}= \theta_i e_j - \theta_j e_i$ with $i,j=1,2, \ldots,n$. Here $(e_i)_{i=1}^{n}$ denotes the canonical basis of $\mathbb{R}^n$ and $\theta_i$ the $i$-th component of $\theta$.  Denoting $\widetilde{A}= \chi_{\Omega}(A_1-A_2)$, it follows that $\widetilde{A}$ belongs to $H^{1}(\mathbb{R}^n)$ and has a compact support. Thus from (\ref{properties}), for all $\widetilde{h}\in C^\infty_0(\mathbb{R})$ and all $i,j= 1,2, \ldots,n$; we get 
\begin{align*}
& \mu_{i,j} \cdot \int_{\mathbb{R}}  \dfrac{\partial}{\partial s} \widetilde{h}(s) (\pmb{R}\left [  \chi_{\Omega}(A_1-A_2)\right ] )(s, \theta )ds\\
& = \int_{\mathbb{R}}   \dfrac{\partial}{\partial s}\widetilde{h}(s) \left [  \theta_i e_j - \theta_j e_i\right ] \cdot (\pmb{R} \widetilde{A})(s, \theta )ds\\
&  = \int_{\mathbb{R}}  \dfrac{\partial}{\partial s}\widetilde{h}(s) \left [  \theta_i \left( \pmb{R}\widetilde{A}_j\right)(s, \theta ) - \theta_j \left( \pmb{R}\widetilde{A}_i\right)(s, \theta )\right ] ds\\
& = -\int_{\mathbb{R}} \widetilde{h}(s) \left [  \theta_i  \dfrac{\partial}{\partial s}\left( \pmb{R}\widetilde{A}_j\right)(s, \theta ) - \theta_j  \dfrac{\partial}{\partial s} \left( \pmb{R}\widetilde{A}_i\right)(s, \theta )\right ] ds\\
& = -\int_{\mathbb{R}} \widetilde{h}(s) \left [  \pmb{R}\left( \partial_{x_i}\widetilde{A}_j-\partial_{x_j}\widetilde{A}_i \right)   \right ] (s,\theta)ds.
\end{align*}
From this and (\ref{muuu3}) it follows that for all $\theta\in M$ we have
\begin{align*}
&\left | \int_{\mathbb{R}} \widetilde{h}(s) \left [  \pmb{R}\left( \partial_{x_i}\widetilde{A}_j-\partial_{x_j}\widetilde{A}_i \right)   \right ] (s,\theta)ds \right |\\
& \qquad = \left |  \mu_{i,j} \cdot \int_{\mathbb{R}}  \dfrac{\partial}{\partial s} \widetilde{h}(s) (\pmb{R}\left [  \chi_{\Omega}(A_1-A_2)\right ] )(s, \theta )ds \right |\\
&\qquad  \leq  C\left |\mu_{i,j}  \right | \left |\log \left \| \Lambda_1^{\sharp}- \Lambda_2^{\sharp} \right \|  \right |^{-1/2} \left \| \partial_s \widetilde{h} \right \|_{H^2(\mathbb{R})}\\
&\qquad  \leq  C \left |\log \left \| \Lambda_1^{\sharp}- \Lambda_2^{\sharp} \right \|  \right |^{-1/2} \left \| \widetilde{h} \right \|_{H^3(\mathbb{R})},
\end{align*}
which implies that
\[
\left \| \pmb{R}\left( \partial_{x_i}\widetilde{A}_j-\partial_{x_j}\widetilde{A}_i \right)  \right \|_{H^{-3}(\mathbb{R}; L^{\infty}(M))}  \leq  C \left |\log \left \| \Lambda_1^{\sharp}- \Lambda_2^{\sharp} \right \|  \right |^{-1/2}.
\]
On the other hand, from (\ref{Rfequation}) we obtain
\[
 \left \| \pmb{R}\left( \partial_{x_i}\widetilde{A}_j-\partial_{x_j}\widetilde{A}_i \right)  \right \|_{H^{\frac{n-1}{2}}(\mathbb{R}; L^2(M))} \leq C_1  \left \|  \partial_{x_i}\widetilde{A}_j-\partial_{x_j}\widetilde{A}_i\right \|_{L^{2}(\mathbb{R}^n)}\leq C_2.
\]
Thus, by standard interpolation between the spaces $H^{-3}(\mathbb{R}; L^{\infty}(M))$ and $H^{\frac{n-1}{2}}(\mathbb{R}; L^2(M))$, we have
\begin{equation}\label{interpol}
\begin{aligned}
&  \left \| \pmb{R}\left( \partial_{x_i}\widetilde{A}_j-\partial_{x_j}\widetilde{A}_i \right)  \right \|_{L^2(\mathbb{R}; L^{{(n+5)}/{3}}(M))}\\
 & \qquad \qquad \qquad \leq C_3  \left |\log \left \| \Lambda_1^{\sharp}- \Lambda_2^{\sharp} \right \|  \right |^{-\frac{1}{2}{(n-1)}/{(n+5)}}.
\end{aligned}
\end{equation}
\\

The next step will be to verify the three conditions of Theorem \ref{cdsr} for the function $F_{i,j}:=\partial_{x_i}\widetilde{A}_j- \partial_{x_j}\widetilde{A}_i$, for fixed $i\neq  j$; $i,j \in \left \{1,2, \ldots,n\right \}$.  Let us start with the supporting condition $(b)$. Indeed, take $\theta_0\in M$ and by translation, there exists $y_0\in \supp F_{i,j}$ such that
\[
\supp F_{i,j}\subset  \left \{ x\in \mathbb{R}^n \; : \; \left \langle x-y_0, \theta_0 \right \rangle \leq 0 \right \}. 
\]
This can be done because $\Omega$ is a bounded open set. Since $M$ is a open neighborhood of $\theta_0$ and from estimate (\ref{interpol}), we can control the Radon transform of $F_{i,j}$ for $s\in \mathbb{R}$ and $\theta\in M$. Thus, from Remark (\ref{albe}), there exists $\beta\in\left(0,1\right)$ such that the condition $(a)$ is satisfied for any $\alpha>0$. Moreover, by taking $\alpha$ large enough it follow that $\supp F_{i,j}\subset \overline{\Omega} \subset G$, where $G$ is defined by (\ref{gii}). The condition $(c)$ is satisfied for $p=2$ and $0<\lambda<1/2$. Thus, Theorem \ref{cdsr} ensures that there exists $C>0$ such that
\begin{equation}\label{a123}
\left \|F_{i,j}  \right \|_{L^2(\mathbb{R}^n)} \leq C    \left |  \log \int_{-\alpha}^{\alpha}  (1+ \left | s \right |)^n \left \| \pmb{R}_{y_0} F_{i,j}(s, \cdot) \right \|_{L^1(\Gamma)} ds   \right |^{-\lambda/2}.
\end{equation}
Here the set $\Gamma$ is where we have the control of the Radon transform on the $\theta$-variable. In our case (see the estimate (\ref{interpol})) we have the control on $M$. Now we set
\[
L= \underset{\theta\in M}{\sup}  \left \| (1+ \left | \cdot- \left \langle \theta, y_0 \right  \rangle \right |)^n  \right \|_{L^2(\left | s \right |\leq \alpha +\left | y_0 \right | )}
\]
and denote by $\left | M \right |$ the measure of $M$. Then the inequality (\ref{a123}), (\ref{interpol}), Fubini's theorem and H\"older's inequality applied twice, imply that
\begin{align*}
&\int_{-\alpha}^{\alpha}  (1+ \left | s \right |)^n \left \| \pmb{R}_{y_0} F_{i,j}(s, \cdot) \right \|_{L^1(M)} ds\\
&=  \int_{-\alpha}^{\alpha}  (1+ \left | s \right |)^n  \int_M  \left | (\pmb{R} F_{i,j} )(s+ \left \langle \theta, y_0 \right  \rangle, \theta)   \right | d\theta ds \\
& \leq\int_M \int_{-(\alpha +\left | y_0 \right | )}^{\alpha +\left | y_0 \right | }    (1+ \left | s- \left \langle \theta, y_0 \right  \rangle \right |)^n  \left | (\pmb{R} F_{i,j} )(s, \theta)   \right | ds  d\theta  \\
& \leq \int_M   \left \| (1+ \left | \cdot- \left \langle \theta, y_0 \right  \rangle \right |)^n  \right \|_{L^2(\left | s \right |\leq \alpha +\left | y_0 \right | )}    \left \| (\pmb{R} F_{i,j} )(\cdot, \theta)  \right \|_{L^2(\left | s \right |\leq \alpha +\left | y_0 \right | )} d\theta \\
&\leq  L \int_M \left(  \int_{\mathbb{R}}  \left | \pmb{R}F_{i,j}(s,\theta) \right | ^2 ds \right)^{1/2}d\theta  \\
&  \leq L \left | M \right |^{\frac{n+2}{n+5}}  \left(   \int_M   \left(  \int_{\mathbb{R}} \left | \left( \pmb{R}F_{i,j}\right) (s, \theta)  \right |^2       \right)^{(n+5)/6}   d\theta \right)^{3/(n+5)}\\
& =  L \left | M \right |^{\frac{n+2}{n+5}}  \left \| \pmb{R}\left( \partial_{x_i}\widetilde{A}_j-\partial_{x_j}\widetilde{A}_i \right)  \right \|_{L^2(\mathbb{R}; L^{{(n+5)}/{3}}(M))}\\
& \leq C_4  \left |\log \left \| \Lambda_1^{\sharp}- \Lambda_2^{\sharp} \right \|  \right |^{-\frac{1}{2}{(n-1)}/{(n+5)}}.
\end{align*}
We conclude the proof by taking logarithm  to both sides of the above inequality and taking into account estimate (\ref{a123}).


\section{Stability estimate for the electrical potential}

The goal of this section is to prove Theorem \ref{SEP}. The idea will be to combine the gauge invariance for the DN map and the stability result already proved for the magnetic fields. This kind of arguments involve a  Hodge decomposition as in Tzou, see \cite{Tz}. We recall this decomposition in the following lemma.

\begin{lem}\label{hd}
Let $\Omega \subset \mathbb{R}^n$ be a simply-connected open bounded set with connected smooth boundary. If $A_1, A_2\in W^{2,p}(\Omega)$ with $p\geq 2$, and $A_1=A_2$ on $\partial \Omega$. Then there exist a constant $C>0$ and $\omega\in W^{3,p}(\Omega)\cap H_{0}^1(\Omega)$ such that 
\[
\left \| A_1-A_2-d\omega \right \|_{W^{1,p}(\Omega)} \leq C \left \|d(A_1-A_2)  \right \|_{L^p(\Omega)}
\]
and
\[
\left \| \omega \right \|_{W^{3,p}(\Omega)} \leq C \left \| A_1-A_2 \right \|_{W^{2,p}(\Omega)}.
\]
\end{lem}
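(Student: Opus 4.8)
The plan is to obtain $\omega$ from a single scalar Dirichlet problem and then to recognize the remainder $A_1-A_2-d\omega$ as a co-closed one-form whose full $W^{1,p}$ norm is controlled by its exterior derivative alone. Write $A:=A_1-A_2$, a one-form in $W^{2,p}(\Omega;\mathbb{R}^n)$ all of whose components vanish on $\partial\Omega$ (so both its tangential and normal parts vanish there). I would let $\omega$ be the unique solution of
\[
\Delta\omega=\mathrm{div}\,A\quad\text{in }\Omega,\qquad \omega|_{\partial\Omega}=0.
\]
Since $A\in W^{2,p}$ we have $\mathrm{div}\,A\in W^{1,p}$, so standard $L^p$ elliptic regularity for the Dirichlet Laplacian on the smooth bounded domain $\Omega$ yields $\omega\in W^{3,p}(\Omega)$ together with
\[
\|\omega\|_{W^{3,p}(\Omega)}\le C\|\mathrm{div}\,A\|_{W^{1,p}(\Omega)}\le C\|A\|_{W^{2,p}(\Omega)},
\]
which is the second asserted estimate; moreover $\omega\in H^1_0(\Omega)$ because $\omega|_{\partial\Omega}=0$ and $W^{3,p}\hookrightarrow H^1$ for $p\ge 2$ on a bounded domain.

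Next I would set $B:=A-d\omega$. By the choice of $\omega$ and the identification of the one-form $d\omega$ with the gradient $\nabla\omega$, one has $\mathrm{div}\,B=\mathrm{div}\,A-\Delta\omega=0$, while $dB=dA-d(d\omega)=dA=d(A_1-A_2)$. The key boundary fact is that the tangential part of $B$ vanishes on $\partial\Omega$: the tangential part of $A$ is zero since $A=0$ there, and the tangential part of $d\omega$ equals the exterior derivative along $\partial\Omega$ of $\omega|_{\partial\Omega}=0$, hence is also zero. Thus $B$ is a co-closed one-form with vanishing tangential trace, and its curl is exactly $d(A_1-A_2)$.

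Finally I would invoke the $L^p$ Gaffney--Friedrichs (div--curl) inequality for one-forms with vanishing tangential trace on the smooth bounded domain $\Omega$: modulo the space $\mathcal{H}$ of harmonic fields obeying the same boundary condition, $\|B\|_{W^{1,p}(\Omega)}\le C\big(\|dB\|_{L^p(\Omega)}+\|\mathrm{div}\,B\|_{L^p(\Omega)}\big)$. Here the hypotheses of the lemma enter decisively, since the relevant harmonic fields are parametrized by the relative cohomology $H^1(\Omega,\partial\Omega)$; the long exact sequence of the pair $(\Omega,\partial\Omega)$, using that $\Omega$ is connected and simply connected and that $\partial\Omega$ is connected, forces $H^1(\Omega,\partial\Omega)=0$, so $\mathcal{H}=\{0\}$ and no projection term survives. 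As $\mathrm{div}\,B=0$, this gives $\|A_1-A_2-d\omega\|_{W^{1,p}(\Omega)}=\|B\|_{W^{1,p}(\Omega)}\le C\|dB\|_{L^p(\Omega)}=C\|d(A_1-A_2)\|_{L^p(\Omega)}$, the first estimate. I expect the main obstacle to be precisely this last step: the $W^{1,p}$ (rather than $L^2$) div--curl estimate together with the removal of the lower-order term through the vanishing of the harmonic fields, which rests on $L^p$ Hodge theory on domains with boundary (as used in \cite{Tz}); by comparison the scalar elliptic regularity and the cohomological computation are routine.
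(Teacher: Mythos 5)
Your proof is correct. Note that the paper does not actually prove Lemma \ref{hd}: it is recalled verbatim from Tzou \cite{Tz}, and the argument there is the same Hodge-theoretic one you give — produce $\omega$ from the Dirichlet problem $\Delta\omega=\operatorname{div}(A_1-A_2)$, $\omega|_{\partial\Omega}=0$, observe that the remainder is divergence-free with vanishing tangential trace, and conclude with the $W^{1,p}$ Gaffney (div--curl) estimate, where the simple connectedness of $\Omega$ and connectedness of $\partial\Omega$ kill the Dirichlet harmonic fields via $H^1(\Omega,\partial\Omega)=0$, exactly as you compute. Your identification of the delicate step (the $L^p$ Gaffney inequality plus the compactness argument removing the lower-order term) is also the right one; for a smooth bounded domain both are standard and citable.
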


From now on we consider the bounded open set $\Omega$ to be simply-connected with connected smooth boundary. Let $A_1, A_2\in W^{2, \infty}(\Omega)$; $q_1, q_2\in L^{\infty}(\Omega)$ and $p>n$. Then, by Morrey's inequality and Lemma \ref{hd} there exist a constant $C>0$ and  $w\in W^{3,p}(\Omega)\cap H_{0}^1(\Omega)$ such that
\begin{equation}\label{ggg1}
\left \| A_1-A_2-\nabla \omega \right \|_{C^{0, 1-\frac{n}{p}} (\overline{\Omega})} \leq C \left  \|d(A_1-A_2)  \right \|_{L^p(\Omega)}
\end{equation}
and
\begin{equation}\label{ggg2}
\left \| \omega \right \|_{L^{\infty}(\Omega)} + \left \| \nabla \omega \right \|_{L^{\infty}(\Omega)}+ \left \| \Delta \omega \right \|_{L^{\infty}(\Omega)} \leq C \left \| A_1-A_2 \right \|_{W^{2,p}(\Omega)}.
\end{equation}

We denote by $\widetilde{A}_1= A_1 -\nabla \omega / 2$ and $\widetilde{A}_2= A_2 +\nabla \omega / 2$. Thus, by Lemma $3.1$ in \cite{KU}, we have the identities
\begin{equation}\label{ginv1}
e^{i\omega/2}\mathcal{L}_{\widetilde{A}_1,q_1}e^{-i\omega/2}= \mathcal{L}_{\widetilde{A_1}, q_1} \; , \; \Lambda_{A_1,q_1}= \Lambda_{\widetilde{A}_1,q_1}
\end{equation}
and
\begin{equation}\label{ginv2}
e^{-i\omega/2}\mathcal{L}_{\widetilde{A}_1,q_1}e^{i\omega/2}= \mathcal{L}_{\widetilde{A_2}, \overline{q}_2} \; , \; \Lambda_{A_2,\overline{q}_2}= \Lambda_{\widetilde{A}_2,\overline{q}_2}.
\end{equation}
\\

In Section $1$ we used identity (\ref{al}) to isolate $A_1-A_2$ and then using CGO solutions we obtain the estimate from Corollary \ref{righthandside}. Now we follow the same ideas. We use again Alessadrini's identity in order to isolate $q_1-q_2$ and we obtain stability result for electrical potentials by using similar estimates as in Proposition  \ref{leftsideAI} and Corollary \ref{righthandside}. We start by denoting $\widetilde{\Lambda}_i = \Lambda_{\widetilde{A}_i, q_i}$ for $i=1,2$. If $U_1, U_2\in H^1(\Omega)$ such that $\mathcal{L}_{\widetilde{A}_1, q_1}U_1=0$ and $\mathcal{L}_{\widetilde{A}_2, \overline{q_2}}U_2=0$ then by identity (\ref{al}) we have
\begin{equation}\label{algee}
\begin{aligned}
& \left \langle  (\widetilde{\Lambda}_1 - \widetilde{\Lambda}_2)U_1,U_2 \right \rangle_{L^2(\partial\Omega)}\\
&=\int_{\Omega} \left[ (\widetilde{A}_1-\widetilde{A}_2)\cdot(DU_1 \overline{U}_2 + U_1 \overline{DU}_2) + (\widetilde{A}_1^2-\widetilde{A}_2^2+q_1-q_2)U_1\overline{U}_2\right].
 \end{aligned}
\end{equation}

\begin{prop}\label{qq1} Let $\Omega\subset\mathbb{R}^n$ be a bounded open set with smooth boundary. Consider two positive constants $M$ and $\sigma$. Let $A_1, A_2\in \mathscr{A}(\Omega, M)$ with $A_1=A_2$ on $\partial\Omega$; and $q_1, q_2\in  \mathscr{Q}(\Omega, M, \sigma)$. If $U_1, U_2\in H^1(\Omega)$ satisfie $\mathcal{L}_{\widetilde{A}_1, q_1}U_1=0$ and $\mathcal{L}_{\widetilde{A}_2, \overline{q}_2}U_2=0$, then there exist two positive constants $\tau_0$ and $C$ (both depending on $n, \Omega, M, \sigma$) such that  the estimate
\begin{equation}\label{alei}
\begin{aligned}
&\left |  \left \langle (\widetilde{\Lambda}_1- \widetilde{\Lambda}_2)U_1, U_2  \right \rangle_{L^2(\partial\Omega)} \right |\\
& \leq  C\left \| \widetilde{\Lambda}_1^{\sharp} - \widetilde{\Lambda}_2^{\sharp}  \right \|  \left(  \left \| U_1 \right \|_{H^1(\Omega)}  \left \| U_2 \right \|_{H^1(\Omega)}  + e^{\tau c}\left \| U_1 \right \|_{H^1(\Omega)}   \left \| e^{\tau \xi \cdot x}U_2 \right \|_{L^{2}(\partial\Omega)} \right)  \\
&  \qquad + C\tau^{-\frac{1}{2}} \left \| e^{-\tau \xi \cdot x}(\mathcal{L}_{\widetilde{A}_1,q_1} - \mathcal{L}_{\widetilde{A}_2,q_2}) U_1 \right \|_{L^2(\Omega)}  \left \| e^{\tau \xi \cdot x} U_2 \right \|_{L^{2}(\partial\Omega)} \\
& \qquad + C\left \| \widetilde{A}_1-\widetilde{A}_2 \right \|_{L^\infty(\Omega)} \left \| e^{-\tau \xi \cdot x}U_1 \right \|_{L^{2}(\partial\Omega)} \left \| e^{\tau \xi \cdot x} U_2 \right \|_{L^{2}(\partial\Omega)}
\end{aligned}
\end{equation}
holds true  for all $\tau \geq \tau_0$ and all $\xi\in N$.
\end{prop}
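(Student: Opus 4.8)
The plan is to repeat, essentially verbatim, the argument proving Proposition \ref{leftsideAI}, with the operators $\mathcal{L}_{A_i,q_i}$ replaced by the gauge-transformed operators $\mathcal{L}_{\widetilde A_i,q_i}$, the solutions $u_i$ replaced by $U_i$, and the partial maps $\Lambda_i^\sharp$ replaced by $\widetilde\Lambda_i^\sharp$. The only substantive change in the bookkeeping is that I will carry the factor $\|\widetilde A_1-\widetilde A_2\|_{L^\infty(\Omega)}$ explicitly through the computation, rather than bounding it by a constant as is done (implicitly, via $A_1,A_2\in\mathscr{A}(\Omega,M)$) in Proposition \ref{leftsideAI}. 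This is the whole point of the present formulation: in the sequel the quantity $\widetilde A_1-\widetilde A_2=A_1-A_2-\nabla\omega$ will be controlled in $L^\infty$ by the already-established stability estimate for the magnetic field through (\ref{ggg1}), so it must survive as a genuine multiplicative prefactor and not be hidden inside $C$.

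Concretely, I would first split $\widetilde\Lambda_1-\widetilde\Lambda_2=\chi(\widetilde\Lambda_1-\widetilde\Lambda_2)+(1-\chi)(\widetilde\Lambda_1-\widetilde\Lambda_2)$ exactly as in (\ref{decomposition}). The $\chi$-part is handled by Cauchy--Schwarz together with the trace inequality $\|U_i\|_{H^{1/2}(\partial\Omega)}\le C\|U_i\|_{H^1(\Omega)}$, producing the term $C\|\widetilde\Lambda_1^\sharp-\widetilde\Lambda_2^\sharp\|\,\|U_1\|_{H^1}\|U_2\|_{H^1}$. For the $(1-\chi)$-part I introduce the auxiliary function $w\in H^1(\Omega)$ solving $\mathcal{L}_{\widetilde A_2,q_2}w=0$ in $\Omega$ with $w|_{\partial\Omega}=U_1|_{\partial\Omega}$, so that $w-U_1\in H^1_0(\Omega)$ and $\mathcal{L}_{\widetilde A_2,q_2}(w-U_1)=(\mathcal{L}_{\widetilde A_1,q_1}-\mathcal{L}_{\widetilde A_2,q_2})U_1\in L^2(\Omega)$. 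Using the definition of the DN map,
\[
(1-\chi)(\widetilde\Lambda_1-\widetilde\Lambda_2)U_1=(1-\chi)\big(\partial_\nu(U_1-w)+i\nu\cdot(\widetilde A_1-\widetilde A_2)U_1\big).
\]

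Since $1-\chi$ is supported away from $F_{N,\epsilon}$, this term lives on $\partial\Omega\setminus\Omega_{-,\epsilon}(\xi)$, where $\langle\xi,\nu\rangle\ge\epsilon>0$; inserting the harmless weight $\epsilon^{-1/2}\sqrt{\langle\xi\cdot\nu\rangle}$ lets me pass to the shadow face $\partial\Omega_{+,0}(\xi)$ and invoke the Carleman estimate with boundary terms of Proposition \ref{PCe} (valid for $w-U_1\in H^1_0(\Omega)$ with right-hand side in $L^2$, by Remark \ref{Cer}). This bounds the $L^2(\partial\Omega_{+,0}(\xi))$-norm of $e^{-\tau\xi\cdot x}\partial_\nu(U_1-w)$ by its $L^2(\partial\Omega_{-,0}(\xi))$-norm plus $\tau^{-1/2}\|e^{-\tau\xi\cdot x}(\mathcal{L}_{\widetilde A_1,q_1}-\mathcal{L}_{\widetilde A_2,q_2})U_1\|_{L^2(\Omega)}$. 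On the illuminated face I rewrite $\partial_\nu(U_1-w)=(\widetilde\Lambda_1-\widetilde\Lambda_2)U_1-i\nu\cdot(\widetilde A_1-\widetilde A_2)U_1$, bound the first summand using $\chi\equiv1$ on $F_{N,\epsilon}$ together with $\|\widetilde\Lambda_1^\sharp-\widetilde\Lambda_2^\sharp\|$, and estimate the second by $\|\widetilde A_1-\widetilde A_2\|_{L^\infty(\Omega)}\,\|e^{-\tau\xi\cdot x}U_1\|_{L^2(\partial\Omega)}$. Collecting these and pairing against $\|e^{\tau\xi\cdot x}U_2\|_{L^2(\partial\Omega)}$ by Cauchy--Schwarz yields (\ref{alei}).

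Because every step is a faithful copy of the proof of Proposition \ref{leftsideAI}, there is no genuinely new obstacle; the only point requiring a moment's care is checking that the gauge-transformed potential $\widetilde A_2=A_2+\nabla\omega/2$ still meets the regularity hypothesis $C^1(\overline\Omega)$ demanded by Proposition \ref{PCe}. This is immediate, since $A_2\in W^{2,\infty}(\Omega)\hookrightarrow C^1(\overline\Omega)$ and, for $p>n$, Morrey's embedding gives $\omega\in W^{3,p}(\Omega)\hookrightarrow C^2(\overline\Omega)$, so $\nabla\omega\in C^1(\overline\Omega)$. The genuinely load-bearing feature is simply the decision to expose $\|\widetilde A_1-\widetilde A_2\|_{L^\infty(\Omega)}$ as an explicit prefactor, which is precisely what will make the subsequent reduction of the electrical-potential stability to the magnetic-potential stability possible.
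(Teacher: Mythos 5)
Your proof is correct and follows essentially the same route as the paper's own argument: the same decomposition $\widetilde\Lambda_1-\widetilde\Lambda_2=\chi(\widetilde\Lambda_1-\widetilde\Lambda_2)+(1-\chi)(\widetilde\Lambda_1-\widetilde\Lambda_2)$, the same auxiliary function solving $\mathcal{L}_{\widetilde A_2,q_2}w_1=0$ with $w_1|_{\partial\Omega}=U_1|_{\partial\Omega}$, the same passage to the shadow face and application of Proposition \ref{PCe} via Remark \ref{Cer}, and the same explicit tracking of the prefactor $\|\widetilde A_1-\widetilde A_2\|_{L^\infty(\Omega)}$ rather than absorbing it into the constant. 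Your added verification that $\widetilde A_2=A_2+\nabla\omega/2$ still has the $C^1(\overline\Omega)$ regularity required by Proposition \ref{PCe} (via $\omega\in W^{3,p}(\Omega)\hookrightarrow C^2(\overline\Omega)$ for $p>n$) is a point the paper leaves implicit.
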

\begin{proof}
The proof is similar to the proof of Proposition \ref{leftsideAI}, with $A_i$ replaced by $\widetilde{A}_i$ for $i=1,2$.  We give the proof only for completeness and we will take extra care when the term $\widetilde{A}_1- \widetilde{A}_2= A_1-A_2-\omega$ appears in the following estimates. Throughout this proof we take into account the notation from Proposition  \ref{leftsideAI}. Let us begin with the following identity
\begin{equation}\label{decomposition1}
\begin{aligned}
&\left \langle (\widetilde{\Lambda}_1- \widetilde{\Lambda}_2)U_1, U_2  \right \rangle_{L^2(\partial\Omega)} = \left \langle\chi (\widetilde{\Lambda}_1- \widetilde{\Lambda}_2)U_1, U_2  \right \rangle_{L^2(\partial\Omega)}   \\
&\qquad \qquad \qquad \qquad \qquad \qquad \;  \; +  \left \langle(1-\chi) (\widetilde{\Lambda}_1- \widetilde{\Lambda}_2)U_1, U_2  \right \rangle_{L^2(\partial \Omega)}.
\end{aligned}
\end{equation}
We estimate the first term of the right hand side in the above identity as follows 
\begin{equation}\label{unoxx12}
\left |  \int_{\partial\Omega}\chi   (\widetilde{\Lambda}_1- \widetilde{\Lambda}_2)U_1\overline{U}_2 dS \right |\leq \left \| \Lambda_1^{\sharp} - \Lambda_2^{\sharp} \right \| \left \| U_1 \right \|_{H^1(\Omega)}  \left \| U_2 \right \|_{H^1(\Omega)}.
\end{equation}
For the second term we will use the Carleman estimate from Proposition \ref{PCe}. Recall that we denoted by $N$ an open subset of $S^{n-1}$ as in the statement of Theorem \ref{SMP}. Hence, for every $\xi \in N$ and since $\chi$ is equal to $1$ on $\Omega_{-,\epsilon}(\xi)$, we get
\begin{equation}\label{secondtermt134}
\begin{aligned}
&\left |  \int_{\partial\Omega}(1-\chi)   (\widetilde{\Lambda}_1- \widetilde{\Lambda}_2)U_1\overline{U}_2 dS \right | \\
& = \left |  \int_{\Omega_{-,\epsilon}(\xi)\cup (\partial\Omega\setminus \Omega_{-,\epsilon}(\xi))} (1-\chi)(\widetilde{\Lambda}_1- \widetilde{\Lambda}_2)U_1\overline{U}_2 dS  \right | \\
& = \left |  \int_{\partial\Omega\setminus \Omega_{-,\epsilon}(\xi)}(1-\chi)(\widetilde{\Lambda}_1- \widetilde{\Lambda}_2)U_1\overline{U}_2 dS  \right | \\
&\leq  C_1 \left \| e^{-\tau \xi\cdot x}  (\widetilde{\Lambda}_1- \widetilde{\Lambda}_2)U_1 \right \|_{L^2(\partial\Omega\setminus\Omega_{-,\epsilon}(\xi))}  \left \| e^{\tau \xi\cdot x} U_2\right \|_{L^2(\partial\Omega\setminus \Omega_{-,\epsilon}(\xi))}.
\end{aligned}
\end{equation}
We now estimate the $L^2(\partial\Omega\setminus \Omega_{-,\epsilon}(\xi))$-norm in the above inequality. Let us introduce an auxiliary function $w_1$ satisfying
\begin{align}
\label{quq}
     \begin{cases}
            \mathcal{L}_{\widetilde{A}_2,q_2}w_1=0,  & 
            \\ {w_1}|_{\d \Omega} = U_1|_{\d \Omega} .
     \end{cases}
\end{align}
Now since $U_1\in H^1(\Omega)$ and  $\mathcal{L}_{\widetilde{A}_2,q_2}(w_1-U_1)= (\mathcal{L}_{\widetilde{A}_1,q_1} - \mathcal{L}_{\widetilde{A}_2,q_2}) U_1$, it follows that $\mathcal{L}_{A_2,q_2}(w-u_1)\in L^2(\Omega)$. Moreover, since $w_1$ satisfies (\ref{quq}), we have $w_1-U_1\in H^1_0(\Omega)$. Hence, the Carleman estimate (\ref{Ce}) and Remark \ref{Cer}, imply that
\begin{equation}\label{firststepq}
\begin{split}
& \left \| e^{-\tau \xi\cdot x}   (\widetilde{\Lambda}_1- \widetilde{\Lambda}_2)U_1 \right \|_{L^2(\partial\Omega\setminus \Omega_{-,\epsilon}(\xi))} \\
& =  \left\|    e^{-\tau \xi\cdot x}   \left( \partial_\nu(U_1-w_1) + i\nu\cdot (\widetilde{A}_1-\widetilde{A}_2)U_1   \right) \right\|_{L^2(\partial\Omega\setminus \Omega_{-,\epsilon}(\xi))}  \\
& \leq \left\|    e^{-\tau \xi\cdot x}    \partial_\nu(U_1-w_1) \right\|_{L^2(\partial\Omega\setminus \Omega_{-,\epsilon}(\xi))} \\
& \qquad   + \left \| \widetilde{A}_1-\widetilde{A}_2 \right \|_{L^\infty(\Omega)} \left\|    e^{-\tau \xi\cdot x}   U_1 \right\|_{L^2(\partial\Omega\setminus \Omega_{-,\epsilon}(\xi))} \\
& \leq  \dfrac{1}{\sqrt{\epsilon}}    \left\|   \sqrt{\left\langle \xi \cdot \nu(\cdot)  \right \rangle}    e^{-\tau \xi\cdot x}    \partial_\nu(U_1-w_1) \right\|_{L^2(\partial\Omega\setminus \Omega_{-,\epsilon}(\xi))}  \\
& \qquad   + \left \| \widetilde{A}_1-\widetilde{A}_2 \right \|_{L^\infty(\Omega)}  \left\|    e^{-\tau \xi\cdot x}   U_1 \right\|_{L^2(\partial\Omega\setminus \Omega_{-,\epsilon}(\xi))} \\
&  \leq  \dfrac{1}{\sqrt{\epsilon}}    \left\|   \sqrt{\left\langle \xi \cdot \nu(\cdot)  \right \rangle}    e^{-\tau \xi\cdot x}    \partial_\nu(U_1-w_1) \right\|_{L^2( \Omega_{+,0}(\xi))} \\
&\qquad +  \left \| \widetilde{A}_1-\widetilde{A}_2 \right \|_{L^\infty(\Omega)} \left\|    e^{-\tau \xi\cdot x}  U_1 \right\|_{L^2(\partial\Omega)}  \\
&   \leq  \dfrac{C_2}{\sqrt{\epsilon}}\left( \left \| e^{-\tau \xi\cdot x}  \partial_{\nu}(U_1-w_1) \right \|_{L^2(\partial\Omega_{-,0}(\xi))}  \right.\\
&\qquad \qquad   \left. + \tau^{-\frac{1}{2}} \left \| e^{-\tau \xi\cdot x}   (\mathcal{L}_{\widetilde{A}_1,q_1} - \mathcal{L}_{\widetilde{A}_2,q_2}) U_1 \right \|_{L^2(\Omega)}    \right) \\
& \qquad \qquad \qquad  +  \left \| \widetilde{A}_1-\widetilde{A}_2 \right \|_{L^\infty(\Omega)} \left\|    e^{-\tau \xi\cdot x}  U_1 \right\|_{L^2(\partial\Omega)}.
\end{split}
\end{equation}
Now we estimate the $L^2(\partial\Omega_{-,0}(\xi))$-norm in the last inequality as follows  
 \begin{equation}\label{secondstep}
\begin{aligned}
&  \left \|  e^{-\tau \xi\cdot x}  \partial_{\nu}(U_1-w_1) \right \|_{L^2(\partial\Omega_{-,0}(\xi))}  \\
& =   \left \| e^{-\tau \xi\cdot x}\left [ (\widetilde{\Lambda}_1-\widetilde{\Lambda}_2)U_1 -i \nu\cdot(\widetilde{A}_1-\widetilde{A}_2)U_1 \right ] \right \|_{L^2(\partial\Omega_{-,0}(\xi))}    \\
& \leq  \left \| e^{-\tau \xi\cdot x} (\widetilde{\Lambda}_1-\widetilde{\Lambda}_2)U_1  \right \|_{L^2(\partial\Omega_{-,0}(\xi))}   \\
& \qquad +  \left \| e^{-\tau \xi\cdot x} i \nu\cdot(\widetilde{A}_1-\widetilde{A}_2)U_1  \right \|_{L^2(\partial\Omega_{-,0}(\xi))}  \\
& =  \left \| e^{-\tau \xi\cdot x} \chi (\widetilde{\Lambda}_1-\widetilde{\Lambda}_2)U_1  \right \|_{L^2(\partial\Omega)}   \\
& \qquad +  \left \| e^{-\tau \xi\cdot x} i \nu\cdot(\widetilde{A}_1-\widetilde{A}_2)U_1  \right \|_{L^2(\partial\Omega_{-,0}(\xi))}  \\
& \leq  e^{\tau c}  \left \| \Lambda_1^{\sharp} - \Lambda_2^{\sharp} \right \| \left \| U_1 \right \|_{H^{\frac{1}{2}}(\partial \Omega)}  +    \left \| \widetilde{A}_1-\widetilde{A}_2 \right \|_{L^\infty(\Omega)} \left\|    e^{-\tau \xi\cdot x}  U_1 \right\|_{L^2(\partial\Omega)}. 
\end{aligned}
\end{equation}
Thus, replacing  (\ref{firststepq}) and (\ref{secondstep})  into  (\ref{secondtermt134}) gives us
\begin{equation} \label{unoqqq}
\begin{aligned}
& \left |  \int_{\partial\Omega}(1-\chi)   (\widetilde{\Lambda}_1- \widetilde{\Lambda}_2)U_1\overline{U}_2 dS \right | \\
& \leq C_4\left(   \epsilon^{-1/2}e^{\tau c}   \left \| \widetilde{\Lambda}_1^{\sharp} - \widetilde{\Lambda}_2^{\sharp} \right \| \left \| U_1 \right \|_{H^1(\Omega)}                                     \right.\\
& \qquad \qquad  +\epsilon^{-1/2} \tau^{-\frac{1}{2}} \left \| e^{-\tau \xi\cdot x}(\mathcal{L}_{\widetilde{A}_1,q_1} - \mathcal{L}_{\widetilde{A}_2,q_2}) U_1 \right \|_{L^2(\Omega)}  \\
& \qquad \qquad  \left.   +  \left \| \widetilde{A}_1-\widetilde{A}_2 \right \|_{L^\infty(\Omega)} \left\|    e^{-\tau \xi\cdot x}  U_1 \right\|_{L^2(\partial\Omega)} \right) \left \| e^{\tau \xi\cdot x} U_2\right \|_{L^2(\partial\Omega)}. 
\end{aligned}
\end{equation}
Replacing  (\ref{unoxx12}) and (\ref{unoqqq}) into (\ref{decomposition1}) we conclude the proof.

\end{proof}

\begin{cor}\label{estgi}
Let $\Omega\subset\mathbb{R}^n$ be a bounded open set with smooth boundary. Consider two positive constants $M$ and $\sigma$. Let $A_1, A_2\in \mathscr{A}(\Omega, M)$ with $A_1=A_2$ on $\partial\Omega$; and $q_1, q_2\in  \mathscr{Q}(\Omega, M, \sigma)$. If $U_1, U_2\in H^1(\Omega)$ satisfies $\mathcal{L}_{\widetilde{A}_1, q_1}U_1=0$ and $\mathcal{L}_{\widetilde{A}_2, \overline{q}_2}U_2=0$, then there exist three positive constants  $\tau_0, C$ and $\widetilde{\lambda}$ (all depending on $n, \Omega, M, \sigma$) such that  the estimate
\begin{equation}\label{ggg5}
\begin{aligned}
& \left |  \left \langle (\widetilde{\Lambda}_1- \widetilde{\Lambda}_2)U_1, U_2  \right \rangle_{L^2(\partial\Omega)} \right |\\
&  \leq C\left( e^{4\tau c} \left \| \Lambda_1^{\sharp}- \Lambda_2^{\sharp} \right \|  + \tau^{-1/2}\right)\left \|  \overline{g}\right \|_{H^2(\Omega)}\\
& \qquad +C\tau^{1/2}  \left | \log \left | \log  \left \|\Lambda^{\sharp}_{1}- \Lambda^{\sharp}_{2}    \right \| \right |   \right |^{-\widetilde{\lambda}} \left \|  \overline{g}\right \|_{H^2(\Omega)}
\end{aligned}
\end{equation}
holds true for all $\tau\geq \tau_0$.
\end{cor}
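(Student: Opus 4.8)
The plan is to repeat the argument of Corollary \ref{righthandside} almost verbatim, but starting from the inequality (\ref{alei}) of Proposition \ref{qq1} and inserting the CGO solutions $U_1$ and $U_2$ built (for the gauged potentials $\widetilde{A}_1,\widetilde{A}_2$) in Theorems \ref{Zs} and \ref{DSFKSjUs}. The first thing I would use is the gauge invariance (\ref{ginv1})--(\ref{ginv2}), which gives $\widetilde{\Lambda}_i^\sharp=\Lambda_i^\sharp$, so each $\|\widetilde{\Lambda}_1^\sharp-\widetilde{\Lambda}_2^\sharp\|$ in (\ref{alei}) may be replaced by $\|\Lambda_1^\sharp-\Lambda_2^\sharp\|$. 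I would then reuse the CGO bounds already computed in (\ref{ceroestimate})--(\ref{tresestimate}), namely $\|U_1\|_{H^1}\leq C\tau e^{\tau c}$, $\|U_2\|_{H^1}\leq C\tau e^{\tau c}\|\overline{g}\|_{H^2}$, $\|e^{-\tau\xi\cdot x}U_1\|_{L^2(\partial\Omega)}\leq C$ and $\|e^{\tau\xi\cdot x}U_2\|_{L^2(\partial\Omega)}\leq C\|\overline{g}\|_{H^2}$. Substituting these into the first group of (\ref{alei}) and using $\tau^2 e^{2\tau c}\leq e^{4\tau c}$ for large $\tau$ produces the leading term $Ce^{4\tau c}\|\Lambda_1^\sharp-\Lambda_2^\sharp\|\,\|\overline{g}\|_{H^2}$.

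The new ingredient, absent from the purely magnetic estimate, is the norm $\|\widetilde{A}_1-\widetilde{A}_2\|_{L^\infty}$, which enters (\ref{alei}) in two places. In the second summand I would expand $(\mathcal{L}_{\widetilde{A}_1,q_1}-\mathcal{L}_{\widetilde{A}_2,q_2})U_1$ as in (\ref{cuatroestimate}); the only piece carrying a factor $\tau$ is the one in which $D$ falls on the exponential phase of $U_1$, and it is multiplied by $\widetilde{A}_1-\widetilde{A}_2$, whence
\[
\|e^{-\tau\xi\cdot x}(\mathcal{L}_{\widetilde{A}_1,q_1}-\mathcal{L}_{\widetilde{A}_2,q_2})U_1\|_{L^2(\Omega)}\leq C\tau\|\widetilde{A}_1-\widetilde{A}_2\|_{L^\infty(\Omega)}+C,
\]
the additive constant absorbing the $q_1-q_2$, $\widetilde{A}_1^2-\widetilde{A}_2^2$ and divergence terms, all bounded by the a priori constant $M$. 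Multiplying by the prefactor $\tau^{-1/2}$ and by $\|e^{\tau\xi\cdot x}U_2\|_{L^2(\partial\Omega)}\leq C\|\overline{g}\|_{H^2}$ gives a contribution $C(\tau^{1/2}\|\widetilde{A}_1-\widetilde{A}_2\|_{L^\infty}+\tau^{-1/2})\|\overline{g}\|_{H^2}$. The third summand of (\ref{alei}) contributes only $C\|\widetilde{A}_1-\widetilde{A}_2\|_{L^\infty}\|\overline{g}\|_{H^2}$, which is absorbed into the $\tau^{1/2}$ term. Collecting everything yields
\[
|\langle(\widetilde{\Lambda}_1-\widetilde{\Lambda}_2)U_1,U_2\rangle|\leq C(e^{4\tau c}\|\Lambda_1^\sharp-\Lambda_2^\sharp\|+\tau^{-1/2})\|\overline{g}\|_{H^2}+C\tau^{1/2}\|\widetilde{A}_1-\widetilde{A}_2\|_{L^\infty}\|\overline{g}\|_{H^2}.
\]

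It then remains to convert $\|\widetilde{A}_1-\widetilde{A}_2\|_{L^\infty}$ into the iterated-logarithm quantity, and this is where the already-established Theorem \ref{SMP} is fed in. Since $\widetilde{A}_1-\widetilde{A}_2=A_1-A_2-\nabla\omega$, the Hodge estimate (\ref{ggg1}) combined with Morrey's embedding yields $\|\widetilde{A}_1-\widetilde{A}_2\|_{L^\infty(\Omega)}\leq C\|d(A_1-A_2)\|_{L^p(\Omega)}$ for a fixed $p>n$. Theorem \ref{SMP} bounds only the $L^2$ norm of $d(A_1-A_2)$, so I would interpolate it against the a priori bound $\|d(A_1-A_2)\|_{L^\infty}\leq CM$ (valid because $A_j\in W^{2,\infty}$) via $\|f\|_{L^p}\leq\|f\|_{L^2}^{2/p}\|f\|_{L^\infty}^{1-2/p}$, obtaining
\[
\|d(A_1-A_2)\|_{L^p(\Omega)}\leq C\left| \log \left| \log \left\| \Lambda_1^\sharp-\Lambda_2^\sharp \right\| \right| \right|^{-\lambda/p},
\]
so that one may take $\widetilde{\lambda}=\lambda/p$. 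Inserting this into the previous display gives (\ref{ggg5}).

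I expect the interpolation/upgrade step to be the only genuine obstacle: the Hodge--Morrey decomposition forces an $L^p$ norm with $p>n$, whereas the magnetic stability of Theorem \ref{SMP} is stated in $L^2$; reconciling the two costs a factor in the exponent and is precisely the reason the admissible rate degrades from $\lambda/2$ to $\widetilde{\lambda}=\lambda/p$. Everything else is a transcription of the estimates from Proposition \ref{leftsideAI} and Corollary \ref{righthandside} with $A_i$ replaced by $\widetilde{A}_i$, taking care only to keep the small factor $\|\widetilde{A}_1-\widetilde{A}_2\|_{L^\infty}$ explicit rather than bounding it crudely by $M$.
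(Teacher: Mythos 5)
Your construction of the solutions $U_1,U_2$ is where the argument breaks. You propose to obtain them by applying Theorems \ref{Zs} and \ref{DSFKSjUs} directly to the gauged potentials $\widetilde{A}_1,\widetilde{A}_2$. As stated, those theorems require magnetic potentials in $C^2(\overline{\Omega};\mathbb{R}^n)$, and their constants --- hence the constants in the CGO bounds (\ref{ceroestimate})--(\ref{tresestimate}) you want to reuse --- depend on the $C^2$ norms. But $\widetilde{A}_i=A_i\mp\nabla\omega/2$, where $\omega$ comes from Lemma \ref{hd} and so lies only in $W^{3,p}(\Omega)$ for a fixed $p>n$; by Morrey this gives $\nabla\omega\in C^{1,1-n/p}(\overline{\Omega})$, not $C^2$, and the only quantitative control available is the $W^{3,p}$ bound (\ref{ggg2}), not a $C^2$ (or $W^{2,\infty}$) bound in terms of $M$. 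So the hypothesis of the CGO theorems fails for $\widetilde{A}_i$, and even granting some construction, the resulting constants would not depend only on $n,\Omega,M,\sigma$ as the corollary requires. The paper avoids this entirely: it constructs the CGO solutions $u_1,u_2$ for the \emph{original} potentials $A_1,A_2$ (which do carry the a priori bounds) and then sets $U_1=e^{i\omega/2}u_1$ and $U_2=e^{-i\omega/2}u_2$. By the gauge identities (\ref{ginv1})--(\ref{ginv2}) these solve the gauged equations; since $\omega\in H_0^1(\Omega)$ they coincide with $u_1,u_2$ on $\partial\Omega$; and all the needed estimates (\ref{U100})--(\ref{U400}) follow from (\ref{ceroestimate})--(\ref{tresestimate}) together with only the $L^\infty$ bounds (\ref{ggg2}) on $\omega$ and $\nabla\omega$. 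Once you replace your construction by this gauging device, the rest of your argument goes through.

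Everything after that point does match the paper: keeping $\left\|\widetilde{A}_1-\widetilde{A}_2\right\|_{L^\infty(\Omega)}$ explicit so that it enters multiplied by $\tau^{1/2}$, the expansion of $(\mathcal{L}_{\widetilde{A}_1,q_1}-\mathcal{L}_{\widetilde{A}_2,q_2})U_1$ as in (\ref{U300}), and the conversion of $\left\|A_1-A_2-\nabla\omega\right\|_{L^\infty(\Omega)}$ into the iterated logarithm via (\ref{ggg1}), interpolation, and Theorem \ref{SMP}. Your interpolation of $L^p$ between $L^2$ and $L^\infty$, yielding $\widetilde{\lambda}=\lambda/p$, is a legitimate and slightly cleaner variant of the paper's interpolation between $L^2$ and $L^q$ with $n<p<q<\infty$, whose exponent $t\lambda/2$ tends to $\lambda/p$ as $q\to\infty$; both rest on the same a priori $W^{2,\infty}$ bound. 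Note, however, that your closing self-assessment misplaces the difficulty: the interpolation step you single out as ``the only genuine obstacle'' is unproblematic, whereas the direct CGO construction for the gauged potentials is the step that actually fails.
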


\begin{proof}
We start by considering $u_1, u_2\in H^{1}(\Omega)$, given by Theorem \ref{Zs} and \ref{DSFKSjUs}, respectively; satisfying $\mathcal{L}_{A_1, q_1}u_1=0$ and $\mathcal{L}_{A_2, \overline{q}_2}=0$. Thus, by identities (\ref{ginv1}) and (\ref{ginv2}) we have that $U_1=e^{i\omega/2}u_1$ and $U_2=e^{-i\omega/2}u_2$ satisfy
\[
\mathcal{L}_{\widetilde{A}_1, q_1}U_1=0, \quad  \mathcal{L}_{\widetilde{A}_2, \overline{q_2}}U_2=0.
\]
From (\ref{ggg2}), it follows that $U_1, U_2\in H^{1}(\Omega)$. Now take $p>n$. Since $A_1, A_2\in W^{2, \infty}(\Omega)$, we have that $A_1, A_2\in W^{2, p}(\Omega)$. The task now is to compute the norms corresponding to $U_1$ of  the right hand side of (\ref{alei}). The estimates for $U_2$ are similar. From (\ref{ceroestimate}) and  (\ref{ggg2}), we have
\begin{equation}\label{U100}
\begin{aligned}
 \left \| U_1 \right \|_{H^1(\Omega)} &=  \left \| e^{i\omega/2}u_1 \right \|_{H^1(\Omega)} = \left \| e^{i\omega/2}u_1 \right \|_{L^2(\Omega)} +   \left \| \nabla (e^{i\omega/2}u_1) \right \|_{L^2(\Omega)} \\
&= \left \| e^{i\omega/2}u_1 \right \|_{L^2(\Omega)} +   \left \| i(\nabla\omega/2) e^{i\omega/2} u_1+ e^{i\omega/2}\nabla u_1 \right \|_{L^2(\Omega)}\\
& \leq C_1 \left \| u_1 \right \|_{H^1(\Omega)}  \leq C_2 \tau e^{\tau c}.
\end{aligned}
\end{equation}
From (\ref{dosestimate}) and since $\omega=0$ on $\partial\Omega$ we obtain
\begin{equation}\label{U200}
\left \| e^{-\tau\xi\cdot x} U_1 \right \|_{L^2(\partial\Omega)} = \left \| e^{-\tau\xi\cdot x} e^{i\omega/2}u_1 \right \|_{L^2(\partial\Omega)} = \left \| e^{-\tau\xi\cdot x} u_1 \right \|_{L^2(\partial\Omega)} \leq C_3.
\end{equation}
To estimate the next term, we set $V=e^{i\omega/2} (a_1+r_1+e^{-\tau(\varphi+i\psi)}e^{\tau l}b)$, where $a_1,\varphi$ and $\psi$ as in (\ref{aaaaa5}). The functions $r_1, l$ and $b$ as in Theorem \ref{Zs}.  Thus, from (\ref{cuatroestimate}) we have
\begin{equation}\label{U300}
\begin{aligned}
&\left \|e^{-\tau\xi\cdot x} (\mathcal{L}_{\widetilde{A}_1, q_1} -   \mathcal{L}_{\widetilde{A}_2, \overline{q}_2} )  U_1\right \|_{L^2(\Omega)} \\
& = \left \| e^{-\tau( \varphi +i\psi)} (\mathcal{L}_{\widetilde{A}_1, q_1} -   \mathcal{L}_{\widetilde{A}_2, \overline{q}_2} )  \left [ e^{i\omega/2}  (e^{\tau(\varphi+i\psi )} (a_1+r_1)-e^{\tau l}b   ) \right ]   \right \|_{L^2(\Omega)}\\
& = \left \| e^{-\tau( \varphi +i\psi)} (\mathcal{L}_{\widetilde{A}_1, q_1} -   \mathcal{L}_{\widetilde{A}_2, \overline{q}_2} )  \left [   e^{\tau(\varphi+i\psi )} V\right ]   \right \|_{L^2(\Omega)}\\
& = \left \| 2\tau (\widetilde{A}_1- \widetilde{A}_2)\cdot D\rho V  + (\mathcal{L}_{\widetilde{A}_1, q_1} -   \mathcal{L}_{\widetilde{A}_2, \overline{q}_2})V\right \|_{L^2(\Omega)}\\
& \leq C_4 \left( \left \| \widetilde{A}_1-\widetilde{A}_2 \right \|_{L^\infty(\Omega)} \left \| V \right \|_{H^1(\Omega)} + \left \| V \right \|_{L^2(\Omega)} \right)\\
& \leq C_5 \left(  \tau \left \| \widetilde{A}_1-\widetilde{A}_2 \right \|_{L^\infty(\Omega)}  + 1\right).
\end{aligned}
\end{equation}
Analogously, from (\ref{unoestimate}) and (\ref{tresestimate}) we obtain
\begin{equation}\label{U400}
\left \| U_2 \right \|_{H^1(\Omega)}    \leq C_6 \tau e^{\tau c} \left \| \overline{g} \right \|_{H^2(\Omega)}, \quad \left \| e^{\tau\xi\cdot x} U_2 \right \|_{L^2(\partial\Omega)}   \leq C_7 \left \| \overline{g} \right \|_{H^2(\Omega)}
\end{equation}
Thus, taking into account that there exists $C_8>0$ such that $\tau\leq C_8e^{\tau k}$ for $\tau$ large enough and combining the estimates (\ref{U100})-(\ref{U400}), we obtain
\begin{equation}\label{ggg4}
\begin{aligned}
& \left |  \left \langle (\widetilde{\Lambda}_1- \widetilde{\Lambda}_2)U_1, U_2  \right \rangle_{L^2(\partial\Omega)} \right |\\
&  \leq C_9\left( e^{4\tau c} \left \| \widetilde{\Lambda}_1^{\sharp}-  \widetilde{\Lambda}_2^{\sharp} \right \| +\tau^{1/2} \left \| \widetilde{A}_1-\widetilde{A}_2 \right \|_{L^\infty(\Omega)}  + \tau^{-1/2}\right)\left \|  \overline{g}\right \|_{H^2(\Omega)}.
\end{aligned}
\end{equation}
On the other hand, we fix $q\in \mathbb{R}$ such that $n<p<q$, and consider $t\in \left( 0,1 \right)$ satisfying $1/p=t/2+(1-t)/q$. Then by elementary interpolation we have
\[
\left \| d(A_1-A_2) \right \|_{L^p(\Omega)}\leq \left \| d(A_1-A_2) \right \|_{L^2(\Omega)}^{t} \left \| d(A_1-A_2) \right \|_{L^q(\Omega)}^{1-t}.
\]
Hence, Theorem \ref{SMP} and (\ref{ggg1}), imply that
\begin{equation}\label{hhh5}
\left \| A_1-A_2-\nabla \omega \right \|_{C^{0, 1-\frac{n}{p}}(\Omega)} \leq C_{10} \left | \log \left | \log  \left \|\Lambda^{\sharp}_{A_1,q_1}- \Lambda^{\sharp}_{A_2,q_2}    \right \| \right |   \right |^{-t\lambda/2}.
\end{equation}
Observe that from (\ref{ginv1})-(\ref{ginv2}), we have $\left \| \widetilde{\Lambda}_1^{\sharp}-  \widetilde{\Lambda}_2^{\sharp} \right \| = \left \|\Lambda_1^{\sharp}-  \Lambda_2^{\sharp} \right \| $. Moreover, $\widetilde{A}_1- \widetilde{A}_2= A_1-A_2-\nabla \omega$. So we conclude the proof by combining the above inequality and (\ref{ggg4}).
\end{proof}

Corollary \ref{estgi} gives us an estimate for the left hand side of Alessandrini's identity (\ref{algee}). The task now is to isolate $q_1-q_2$ from the right hand side. Thus, from (\ref{algee}) we have
\begin{equation}\label{hhh}
\begin{aligned}
&\left |  \int_{\Omega} (q_1-q_2)U_1\overline{U}_2  \right |\leq \left | \left \langle  (\widetilde{\Lambda}_1 - \widetilde{\Lambda}_2)U_1,U_2 \right \rangle_{L^2(\partial\Omega)}  \right |\\
&  \qquad  \qquad +\left | \int_{\Omega}  (\widetilde{A}_1-\widetilde{A}_2)\cdot(DU_1 \overline{U}_2 + U_1 \overline{DU}_2) \right | \\
& \qquad \qquad  \qquad+\left |  \int_{\Omega} (\widetilde{A}_1-\widetilde{A}_2)\cdot  (\widetilde{A}_1+\widetilde{A}_2)U_1\overline{U}_2  \right |\\
& \leq \left | \left \langle  (\widetilde{\Lambda}_1 - \widetilde{\Lambda}_2)U_1,U_2 \right \rangle_{L^2(\partial\Omega)}  \right | \\
&\;\; \; \;  +C_1 \left \|  \widetilde{A}_1-\widetilde{A}_2\right \|_{L^{\infty}} \left(  \left \| DU_1 \overline{U}_2 + U_1 \overline{DU}_2 \right \|_{L^1(\Omega)} +\left \| U_1 \overline{U}_2 \right \|_{L^1(\Omega)} \right).
 \end{aligned}
\end{equation}

Recall that $U_1=e^{i\omega/2}u_1$ and $U_2=e^{-i\omega/2}u_2$, where $u_1, u_2\in H^1(\Omega)$ satisfy $\mathcal{L}_{A_1,q_1}u_1=0$ and $\mathcal{L}_{A_2, \overline{q}_2}u_2=0$, respectively. Hence, from  (\ref{ggg2}),  (\ref{a4})-(\ref{a5}) and an easy computation we have that  
\begin{equation}\label{hhh1}
\begin{aligned}
&  \left \| DU_1 \overline{U}_2 + U_1 \overline{DU}_2 \right \|_{L^1(\Omega)} +\left \| U_1 \overline{U}_2 \right \|_{L^1(\Omega)}\\
& \qquad  \leq C_2\left(  \left \| Du_1 \overline{u}_2 + u_1 \overline{Du}_2 \right \|_{L^1(\Omega)} +\left \| u_1 \overline{u}_2 \right \|_{L^1(\Omega)}  \right)  \leq C_3 \tau.
\end{aligned}
\end{equation}
We consider now $u_1\in H^1(\Omega)$ as in Theorem \ref{Zs}, $u_2\in H^1(\Omega)$ as in Theorem \ref{DSFKSjUs}. As in (\ref{aaaaa5}) and (\ref{denoted}) we denote $a_1=e^{\Phi_1}$ and $a_2=e^{\Phi_2}g$, where $g$ is any smooth function satisfying (\ref{guno1}).Thus, we have
\begin{align*}
& \int_{\Omega}e^{i\omega}(q_1-q_2) a_1\overline{a}_2= \int_{\Omega} (q_1-q_2)U_1\overline{U}_2\\
&\qquad \qquad  - \int_{\Omega} e^{i\omega}(q_1-q_2) \left [ a_1\overline{r}_2 +r_1\overline{a}_2+r_1\overline{r}_2 + e^{-\tau(\varphi+i\psi)}e^{\tau l} b (\overline{a}_2+\overline{r}_2)   \right ],
\end{align*}
and combining (\ref{hhh})-(\ref{hhh1}) with (\ref{estar1}), (\ref{estar12}) and (\ref{estar123}); we obtain
\begin{align*}
\left |  \int_{\Omega}e^{i\omega}(q_1-q_2) a_1\overline{a}_2 \right |&  \leq   \left | \left \langle  (\widetilde{\Lambda}_1 - \widetilde{\Lambda}_2)U_1,U_2 \right \rangle_{L^2(\partial\Omega)}  \right |  \\
& \qquad +C_4\tau \left \|  \widetilde{A}_1-\widetilde{A}_2\right \|_{L^{\infty}} + C_5\tau^{-1}.
\end{align*}
This inequality, (\ref{hhh5}) and Corollary \ref{estgi}, imply that there exist two positive constants $\tau_0$ and $C_6$ such that
\begin{equation}\label{ggg6}
\begin{aligned}
& \left |  \int_{\Omega}e^{i\omega}(q_1-q_2) a_1\overline{a}_2\right |  \leq C_6 \left \|  \overline{g}\right \|_{H^2(\Omega)} \\
&\qquad \qquad  \times \left( e^{4\tau c} \left \| \Lambda_1^{\sharp}-  \Lambda_2^{\sharp} \right \| +\tau \left | \log \left | \log  \left \|\Lambda^{\sharp}_{1}- \Lambda^{\sharp}_{2}    \right \| \right |   \right |^{-\widetilde{\lambda}}  + \tau^{-1/2}\right),
\end{aligned}
\end{equation}
for all $\tau\geq \tau_0$.
\begin{prop}\label{removepe}
Let $\Omega\subset\mathbb{R}^n$ be a bounded open set with smooth boundary. Consider two positive constants $M$ and $\sigma$. Let $A_1, A_2\in \mathscr{A}(\Omega, M)$ with $A_1=A_2$ on $\partial\Omega$; and $q_1, q_2\in  \mathscr{Q}(\Omega, M, \sigma)$. Consider any smooth function $g$ satisfying $(\xi +i\zeta)\cdot \nabla g =0$ (see (\ref{guno1})). If $A_1=A_2$ on $\partial\Omega$, then there exist two positive constants $\tau_0$ and $C$ (both depending on $n, \Omega, M.\sigma$) such that 
\begin{equation}\label{Removeq}
 \left | \int_{\Omega}(q_1-q_2) \overline{g}\right |\leq C\left | \log \left | \log  \left \|\Lambda^{\sharp}_{1}- \Lambda^{\sharp}_{2}    \right \| \right |   \right |^{-\frac{\widetilde{\lambda}}{3}}  \left \| \overline{g} \right \|_{H^2(\Omega)},
\end{equation}
provided that  $\left \| \Lambda_1^{\sharp}-  \Lambda_2^{\sharp} \right \| \leq  e^{-e^{ \left( 8c\tau_0\right)^{\frac{3}{2}\widetilde{\lambda}^{-1}}}}$.
\end{prop}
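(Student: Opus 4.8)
The plan is to feed the already established bound (\ref{ggg6}) into a splitting argument and then optimise in $\tau$, after first arranging that the exponential weight appearing there is close to $1$. Recall that in (\ref{ggg6}) one has $a_1=e^{\Phi_1}$ and $a_2=e^{\Phi_2}g$, so that $a_1\overline a_2=e^{\Phi_1+\overline\Phi_2}\,\overline g$ and the left-hand side of (\ref{ggg6}) is precisely
\[
\left|\int_\Omega (q_1-q_2)\,e^{\Psi}\,\overline g\,dx\right|,\qquad \Psi:=i\omega+\Phi_1+\overline\Phi_2 .
\]
Using (\ref{Phiuno}), (\ref{Phidos}) and $\widetilde A_1-\widetilde A_2=A_1-A_2-\nabla\omega$, a direct computation shows that $\Psi$ solves the transport equation $(\xi+i\zeta)\cdot\nabla\Psi+i(\xi+i\zeta)\cdot(\widetilde A_1-\widetilde A_2)=0$ in $\Omega$.

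Then I would exploit the freedom in the CGO phases. Since $\Phi_1$ (resp.\ $\Phi_2$) may be taken to be any solution of (\ref{Phiuno}) (resp.\ (\ref{Phidos})), I fix $\Phi_1$ as in Theorem \ref{Zs} and \emph{redefine} $\Phi_2$ through $\overline\Phi_2:=-\Phi_1-i\omega+\Psi_0$, where $\Psi_0$ is the solution furnished by Remark \ref{infi} (applied with $\xi_0=\xi+i\zeta$ and $W=\widetilde A_1-\widetilde A_2$, extended by zero to $\mathbb{R}^n$, so that $W\in L^\infty\cap\mathcal{E}'$) of the same transport equation. A one-line check with (\ref{Phiuno}) confirms that this $\Phi_2$ still satisfies (\ref{Phidos}), while with this choice $\Psi\equiv\Psi_0$, whence by (\ref{infinito})
\[
\|\Psi\|_{L^\infty(\Omega)}\le C\,\|\widetilde A_1-\widetilde A_2\|_{L^\infty(\Omega)} .
\]
Because $\|\Phi_2\|_{W^{1,\infty}}\le C(M)$ (by the bounds on $\Phi_1$, by (\ref{ggg2}), and by the boundedness of $\Psi_0$), all the estimates of Proposition \ref{qq1} and Corollary \ref{estgi} remain valid for this choice, so (\ref{ggg6}) continues to hold.

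With $\Psi$ small I would split
\[
\int_\Omega (q_1-q_2)\,\overline g = \int_\Omega (q_1-q_2)\,e^{\Psi}\,\overline g + \int_\Omega (q_1-q_2)\,(1-e^{\Psi})\,\overline g .
\]
The first integral is controlled by (\ref{ggg6}). For the second I use $|1-e^{\Psi}|\le \|\Psi\|_{L^\infty}e^{\|\Psi\|_{L^\infty}}$, $\|q_1-q_2\|_{L^\infty}\le 2M$ and $\|\overline g\|_{L^1(\Omega)}\le C\|\overline g\|_{H^2(\Omega)}$, together with the magnetic stability bound (\ref{hhh5}) (recall $\widetilde A_1-\widetilde A_2=A_1-A_2-\nabla\omega$ and $\widetilde\lambda=t\lambda/2$), to get
\[
\left|\int_\Omega (q_1-q_2)(1-e^{\Psi})\overline g\right| \le C\,\|\widetilde A_1-\widetilde A_2\|_{L^\infty}\,\|\overline g\|_{H^2} \le C\,|\log|\log\|\Lambda_1^\sharp-\Lambda_2^\sharp\|\,||^{-\widetilde\lambda}\,\|\overline g\|_{H^2},
\]
which is already dominated by the claimed rate $-\widetilde\lambda/3$.

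Finally I would optimise in $\tau$. Writing $L:=|\log\|\Lambda_1^\sharp-\Lambda_2^\sharp\||$ and choosing $\tau=(8c)^{-1}|\log L|^{2\widetilde\lambda/3}$, the hypothesis $\|\Lambda_1^\sharp-\Lambda_2^\sharp\|\le e^{-e^{(8c\tau_0)^{3/(2\widetilde\lambda)}}}$ is exactly what guarantees $\tau\ge\tau_0$. With this $\tau$ the term $e^{4\tau c}\|\Lambda_1^\sharp-\Lambda_2^\sharp\|$ in (\ref{ggg6}) is bounded by $\|\Lambda_1^\sharp-\Lambda_2^\sharp\|^{1/2}$ and is hence negligible, while $\tau\,|\log L|^{-\widetilde\lambda}$ and $\tau^{-1/2}$ both reduce to $C|\log L|^{-\widetilde\lambda/3}$; collecting these with the error term above yields (\ref{Removeq}). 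The \emph{main obstacle} is the middle step: since $q_1-q_2$ is scalar, Lemma \ref{KyUh}, which eliminates the phase only for vector-valued weights, cannot be invoked, so the smallness of the weight must instead be \emph{built into} $\Psi$ via Remark \ref{infi}, using the already-proven smallness of the gauged magnetic potential $\widetilde A_1-\widetilde A_2$; one must also verify that this non-standard choice of $\Phi_2$ is admissible in Theorems \ref{Zs} and \ref{DSFKSjUs}.
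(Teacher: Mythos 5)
Your skeleton coincides with the paper's own proof: the same splitting of $\int_\Omega(q_1-q_2)\overline g$ into a term controlled by (\ref{ggg6}) and a term controlled by $\|1-e^{\Psi}\|_{L^\infty}$, the same appeal to the magnetic stability bound (\ref{hhh5}) combined with the transport equation and Remark \ref{infi}, and exactly the paper's choice $\tau=\frac{1}{8c}\bigl|\log\bigl|\log\|\Lambda_1^\sharp-\Lambda_2^\sharp\|\bigr|\bigr|^{\frac{2}{3}\widetilde{\lambda}}$, for which the stated smallness hypothesis guarantees $\tau\geq\tau_0$. The genuine gap is in your treatment of the phase. You redefine $\overline{\Phi}_2:=-\Phi_1-i\omega+\Psi_0$ with $\Psi_0$ the solution furnished by Remark \ref{infi}, and then assert that ``all the estimates of Proposition \ref{qq1} and Corollary \ref{estgi} remain valid for this choice.'' That assertion is unjustified and can actually fail: Remark \ref{infi} produces a solution that is merely $L^\infty$, with no control on $\nabla\Psi_0$. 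Solutions of $(\xi+i\zeta)\cdot\nabla\Psi_0+i(\xi+i\zeta)\cdot W=0$ are unique only modulo bounded functions annihilated by $(\xi+i\zeta)\cdot\nabla$, and such functions can be nowhere differentiable in the $n-2$ directions orthogonal to $\mathrm{span}\{\xi,\zeta\}$. For such a $\Psi_0$ your $\Phi_2$ is not in $W^{1,\infty}$ (so property (i) of Theorem \ref{DSFKSjUs} is violated), $e^{\Phi_2}g$ need not even lie in $H^1(\Omega)$, and the construction of the remainder $r_2$ --- which requires placing $\mathcal{L}_{A_2,\overline{q}_2}(e^{\Phi_2}g)$, hence two derivatives of $\Phi_2$, in $L^2(\Omega)$ --- collapses, taking estimates (\ref{estar12}), (\ref{unoestimate}), (\ref{tresestimate}) and therefore (\ref{ggg6}) with it. Your parenthetical claim that $\|\Phi_2\|_{W^{1,\infty}}\leq C(M)$ follows ``by the boundedness of $\Psi_0$'' is exactly where the argument breaks: an $L^\infty$ bound on $\Psi_0$ bounds no derivative of $\Phi_2$. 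You flagged this admissibility check as necessary, but it is not a routine verification; it is the crux, and it is left undone.

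For comparison, the paper keeps the phases $\Phi_1,\Phi_2$ of Theorems \ref{Zs} and \ref{DSFKSjUs} untouched and applies (\ref{infinito}) directly to $\Phi_1+\overline{\Phi}_2+i\omega$. You are right that this step conceals a subtlety (the bound of Remark \ref{infi} is stated for \emph{some} solution, not for every solution), but the implicit resolution is different from yours: the CGO phases are the canonical Cauchy-transform ($\overline\partial^{-1}$-type) solutions of their transport equations, so by linearity $\Phi_1+\overline{\Phi}_2+i\omega$ is the Cauchy-transform solution attached to the compactly supported datum $A_1-A_2-\nabla\omega$ (compact support uses $A_1=A_2$ on $\partial\Omega$ and $\omega\in H^1_0(\Omega)$; that $i\omega$ agrees with the Cauchy transform of its own datum follows from uniqueness of decaying solutions), and it is this canonical solution that obeys (\ref{infinito}). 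If you want to keep your redefinition, the repair is to choose $\Psi_0$ to be precisely that Cauchy-transform solution, which for a compactly supported H\"older datum does carry the $C^1$-type bounds needed to rerun the construction of $u_2$; but with that choice your $\overline{\Phi}_2$ reduces to the canonical phase and your argument becomes the paper's, so the redefinition buys nothing. The remainder of your proposal --- the bound on $\int_\Omega(q_1-q_2)(1-e^{\Psi})\overline g$ via (\ref{hhh5}) and the optimization in $\tau$ --- is correct and identical to the paper's.
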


\begin{proof}
We start with the following identity
\begin{equation}\label{lll1}
\int_{\Omega}(q_1-q_2) \overline{g}= \int_{\Omega}( 1-e^{\Phi_1+\overline{\Phi}_2+i\omega} )(q_1-q_2)\overline{g} +   \int_{\Omega}e^{i\omega}(q_1-q_2) a_1\overline{a}_2,
\end{equation}
From (\ref{Phiuno}) and (\ref{Phidos}), we have
\[
(\xi +i\zeta)\cdot \nabla (\Phi_1+\overline{\Phi}_2)+i (\xi +i\zeta)\cdot (A_1-A_2)=0,
\]
which imply that
\[
(\xi +i\zeta)\cdot \nabla (\Phi_1+\overline{\Phi}_2 +i\omega)+i (\xi +i\zeta)\cdot (A_1-A_2-\nabla\omega)=0
\]
and  by estimate (\ref{infinito}) from Remark \ref{infi}, we get
\[
\left \| \Phi_1+\overline{\Phi}_2 +i\omega \right \|_{L^\infty(\Omega)}\leq C_1 \left \| A_1-A_2-\nabla\omega \right \|_{L^\infty(\Omega)}.
\]
We can now estimate the first term of the right hand side of (\ref{lll1}) by using the inequality
\[
\left | e^{a}-e^{b} \right |\leq \left | a-b \right | e^{\max \left \{ \Re a, \Re b \right \}} \; \; , \; \; a,b \in \mathbb{C}.
\]
Thus,
\begin{align*}
& \left | \int_{\Omega}( 1-e^{\Phi_1+\overline{\Phi}_2+i\omega} )(q_1-q_2)\overline{g}   \right |= \left | \int_{\Omega}( e^0-e^{\Phi_1+\overline{\Phi}_2+i\omega} )(q_1-q_2)\overline{g}   \right |\\
& \qquad\leq \left \| ( \Phi_1+\overline{\Phi}_2 +i\omega) e^{\max\left \{ 0, \Re( \Phi_1+\overline{\Phi}_2 +i\omega) \right \}} \right \|_{L^\infty(\Omega)}\int_{\Omega} \left | (q_1-q_2) \overline{g} \right |\\
& \qquad\leq C_2  \left \| A_1-A_2-\nabla\omega \right \|_{L^\infty(\Omega)} \left \|  \overline{g}\right \|_{L^2(\Omega)}.
\end{align*}
Taking into account  (\ref{lll1}), (\ref{hhh5}) and (\ref{ggg6}) we know that there exist $\tau_0>0$ and  $C_3>0$ such that 
\begin{equation}
\begin{aligned}
& \left | \int_{\Omega}(q_1-q_2) \overline{g}\right | \leq C_3\left \|  \overline{g}\right \|_{H^2(\Omega)} \\
&  \times \left( e^{4\tau c} \left \| \Lambda_1^{\sharp}-  \Lambda_2^{\sharp} \right \| +\tau \left | \log \left | \log  \left \|\Lambda^{\sharp}_{1}- \Lambda^{\sharp}_{2}    \right \| \right |   \right |^{-\widetilde{\lambda}}  + \tau^{-1/2}\right).
\end{aligned}
\end{equation}
We conclude the proof by taking
\[
\tau= \dfrac{1}{8c} \left | \log \left | \log  \left \|\Lambda^{\sharp}_{1}- \Lambda^{\sharp}_{2}    \right \| \right |   \right |^{\frac{2}{3}\widetilde{\lambda}} \geq \tau_0,
\]
whenever
\[
\left \| \Lambda_1^{\sharp}-  \Lambda_2^{\sharp} \right \| \leq  e^{-e^{ \left( 8c\tau_0\right)^{\frac{3}{2}\widetilde{\lambda}^{-1}}}}.
\]
\end{proof}
\subsection{Proof of Theorem \ref{SEP}} We begin by considering the notation introduced in Theorem \ref{SMP} and proceed analogously as in its proof. The estimate (\ref{Removeq}) from Proposition \ref{removepe},  imply that
\[
\left | \int_{\mathbb{R}}  \widetilde{g}(s) (\pmb{R}\left [  \chi_{\Omega}(q_1-q_2)\right ] )(s, \theta )ds \right |\leq C_1 \left | \log \left | \log  \left \|\Lambda^{\sharp}_{1}- \Lambda^{\sharp}_{2}    \right \| \right |   \right |^{-\frac{\widetilde{\lambda}}{3}}\left \| \widetilde{g} \right \|_{H^2(\mathbb{R})},
\]
for all $\theta\in M$. The set $M$ is defined by (\ref{M}). From this inequality we have
\[
\left \| \pmb{R}\left(\chi_{\Omega}(q_1-q_2)\right)  \right \|_{H^{-2}(\mathbb{R}; L^{\infty}(M))}  \leq  C_2 \left | \log \left | \log  \left \|\Lambda^{\sharp}_{1}- \Lambda^{\sharp}_{2}    \right \| \right |   \right |^{-\frac{\widetilde{\lambda}}{3}}.
\]
On the other hand, from (\ref{Rfequation}), we get
\[
 \left \| \pmb{R}\left(\chi_{\Omega}(q_1-q_2) \right)  \right \|_{H^{\frac{n-1}{2}}(\mathbb{R}; L^2(M))} \leq C_3  \left \| \chi_{\Omega}(q_1-q_2)   \right \|_{L^{2}(\mathbb{R}^n)}\leq C_4.
\]
Thus by standard interpolation between the spaces $H^{-2}(\mathbb{R}; L^{\infty}(M))$ and $H^{\frac{n-1}{2}}(\mathbb{R}; L^2(M))$, we obtain
\begin{equation}\label{interpolaaaaa}
\begin{aligned}
&  \left \| \pmb{R}\left(\chi_{\Omega}(q_1-q_2) \right)  \right \|_{L^2(\mathbb{R}; L^{{(n+3)}/{2}}(M))}\\
 & \qquad \qquad \qquad \leq C_3 \left | \log \left | \log  \left \|\Lambda^{\sharp}_{1}- \Lambda^{\sharp}_{2}    \right \| \right |   \right |^{-\frac{\widetilde{\lambda}}{3}{(n-1)}/{(n+3)}}.
\end{aligned}
\end{equation}

We are now in position to apply Theorem \ref{cdsr} to the function $\chi_{\Omega}(q_1-q_2)$. Let us verify its three conditions. Since $\Omega$ is bounded, the supporting condition $(b)$ is satisfied for some $y_0\in \mathbb{R}^n$. From the above estimate, there exists $\beta\in \left( 0,1 \right)$ such that the condition $(a)$ is satisfied for any $\alpha>0$. Thus, by taking $\alpha>0$ large enough it follows that $\supp\left( \chi_\Omega(q_1-q_2)\right) \subset G$. Since  $q_1,q_2\in H^{\sigma}(\mathbb{R}^n)$ and $\chi_{\Omega}\in H^{1/2-\sigma}(\mathbb{R}^n)$ (for this last fact see \cite{FRo}), the condition $(c)$ is satisfied for $p=2$ and $0<\lambda<1/2$. For convenience we set $q=\chi_{\Omega}(q_1-q_2)$. Then Theorem \ref{cdsr} ensures that there exists $C_4>0$ such that
\begin{equation}\label{a12345}
\left \| q\right \|_{L^2(\mathbb{R}^n)} \leq C_4    \left |  \log \int_{-\alpha}^{\alpha}  (1+ \left | s \right |)^n \left \| \pmb{R}_{y_0} q(s, \cdot) \right \|_{L^1(\Gamma)} ds   \right |^{-\lambda/2}.
\end{equation}
Analogously to the proof of the magnetic potentials, here the set $\Gamma$ is where we have the control of the Radon transform on the $\theta$-variable. In our case (see the estimate (\ref{interpolaaaaa})) we have the control on $M$. Now we set
\[
L= \underset{\theta\in M}{\sup}  \left \| (1+ \left | \cdot- \left \langle \theta, y_0 \right  \rangle \right |)^n  \right \|_{L^2(\left | s \right |\leq \alpha +\left | y_0 \right | )}
\]
and denote by $\left | M \right |$ the measure of $M$. Then the inequality (\ref{interpolaaaaa}), Fubini's theorem and H\"older's inequality applied twice, and a repetition of the arguments at the end of the proof of Theorem \ref{SMP} will give us

\begin{align*}
&\int_{-\alpha}^{\alpha}  (1+ \left | s \right |)^n \left \| \pmb{R}_{y_0} q(s, \cdot) \right \|_{L^1(M)} ds\\
&  \leq C_5 \left | \log \left | \log  \left \|\Lambda^{\sharp}_{1}- \Lambda^{\sharp}_{2}    \right \| \right |   \right |^{-\frac{\widetilde{\lambda}}{3}{(n-1)}/{(n+3)}}.
\end{align*}
We conclude the proof by taking logarithms in both sides of the above inequality and taking into account the estimate (\ref{a12345}).

\section*{Acknowledgements} The author would like to thank Alberto Ruiz  for the very nice  discussions about Mathematics. For his support and encouragement during the preparation of this article. I would also want to thank Pedro Caro for fruitful conversations about Mathematics and several comments about this article. I would also want to thank Mikko Salo for his hospitality during my research stay in Jyv\"askyl\"a and also for several nice conversations about Mathematics. This article is part of my PhD dissertation and it is supporting by the Project MTM$2011-28198$ of Ministerio de Econom\'ia y Competividad de Espa\~na.

\end{document}